\documentclass[a4paper]{article}
\usepackage{amsmath}
\usepackage{amssymb}
\usepackage{amsthm}
\usepackage{array}
\usepackage{mathrsfs} 
\usepackage{mathtools}
\usepackage{hori}
\usepackage{hyperref}
\usepackage{color}
\usepackage{%
    algorithm,
    algorithmic
}
\usepackage{subcaption}


\renewcommand{\Re}{I\!\!R} 
\newcommand{\NEP}{$\mathrm{NEP}(X,\{\Theta^\nu\}_{\nu=1}^N)$}
\newcommand{\NEPeps}{$\mathrm{NEP}(X,\{\Theta^\nu_\varepsilon\}_{\nu=1}^N)$}
\newcommand{\NEPepsk}{$\mathrm{NEP}(X,\{\Theta^\nu_{\varepsilon_k}\}_{\nu=1}^N)$}

\newcommand{\diag}{\mathrm{diag}}
\newcommand{\paper}[6]{{#1}, {#3}, \emph{#4} {\bf #5}, pp.{#6}, {#2}.} 
\newcommand{\book}[4]{{#1}, {\emph{#3}}, {#4}, {#2}.} 
\newcommand{\chp}[7]{{#1}, {#3}, In: {#4} (eds) {#5}, {#6}, pp.{#7}, {#2}.} 
\newcommand{\conf}[6]{{#1}: {#3}. In: \emph{#4}, {#5}, pp.{#6} ({#2})} 

\definecolor{myred}{RGB}{255,50,50}         
\definecolor{myblack}{RGB}{0,0,0}           

\title{A method for multi-leader--multi-follower games by smoothing the followers' response function}
\author{%
    Atsushi Hori\thanks{Faculty of Science and Technology, Seikei University, Tokyo, Japan.} \and
    Daisuke Tsuyuguchi\thanks{Wakayama Prefectural Board of Education, Wakayama, Japan.} \and
    Ellen H. Fukuda\thanks{Graduate School of Informatics, Kyoto University, Kyoto, Japan.}
}



\date{\today}

\begin{document}

\maketitle

\begin{abstract}
The multi-leader--multi-follower game (MLMFG) involves two or more leaders and followers and serves as a generalization of the Stackelberg game and the single-leader--multi-follower game (SLMFG).
Although MLMFG covers wide range of real-world applications,
its research is still sparse.
Notably, fundamental solution methods for this class of problems remain insufficiently established.
A prevailing approach is to recast the MLMFG as an equilibrium problem with equilibrium constraints (EPEC) and solve it using a solver.
Meanwhile, interpreting the solution to the EPEC in the context of MLMFG may be complex due to shared decision variables among all leaders, followers' strategies that each leader can unilaterally change, but the variables are essentially controlled by followers.
To address this issue, we introduce a response function of followers' noncooperative game that is a function with leaders' strategies as a variable.
Employing this approach allows the MLMFG to be solved as a single-level differentiable variational inequality using a smoothing scheme for the followers' response function.
We also demonstrate that the sequence of solutions to the smoothed variational inequality converges to a stationary equilibrium of the MLMFG.
Finally, we illustrate the behavior of the smoothing method by numerical experiments and confirm its validity.
\end{abstract}

\keywords{Multi-leader--follower game \and Equilibrium problem with equilibrium constraints \and Smoothing approximation \and Nash equilibrium problem \and Bilevel optimization}

\section{Introduction}

The multi-leader--multi-follower game (MLMFG) is a bilevel structured noncooperative game featuring two or more leaders who determine their strategies first, followed by two or more followers who make their choices in a strategic setting.
This framework can be viewed as an extension of bilevel optimization, the Stackelberg model, and single-leader--follower games.
The inherent complexity of this class arises from the fact that each leader's (upper-level) optimization problem is constrained by a set of followers' (lower-level) Nash equilibria, which is computationally challenging to evaluate.

With the growing focus on noncooperative game theory, the MLMFG is also studied in economics and computer science.
The MLMFG has often been used to analyze deregulated markets, such as wholesale electricity markets, which consist of several energy companies (leaders) and the independent system operator (follower) \cite{Hobbs2000,Leyffer2010,Pang2005}.
The class is formulated as a multi-leader--single-follower game (MLSFG).
More recently, researchers in computation and telecommunication formulated an edge computing model with the MLMFG to achieve the best computation resource allocation \cite{Chen2020,Jiang2020,Xiong2019}.
In edge computing, leaders serve as edge computers with medium-scale computational resources, and followers play terminals such as smartphones, security cameras, or robot arms in a factory.
For more recent advances and applications of the MLMFG, please refer to the surveys by Aussel and Svensson \cite{Aussel2020}, and Hu and Fukushima \cite{Hu2015}.

Theoretical studies on MLMFG have taken two main directions.
The first approach reformulates the followers' problems into necessary conditions for optimality, known as the Karush--Kuhn--Tucker (KKT) conditions and incorporates it into the constraint of each leader's optimization problem; that is, each leader's problem solves a mathematical program with equilibrium constraints (MPEC) \cite{Luo1996}.
The resultant problem is referred to as an equilibrium problem with equilibrium constraints (EPEC) \cite{Outrata1998,Su2005} and is solved with an MPEC solver.
This approach in terms of MLMFG has also extensively studied over the years \cite{Hori2019,Leyffer2010,Su2005}.
The EPEC approach yields the so-called shared constraints and variables, which coincides the KKT conditions and all followers' strategies, respectively.
However, the EPEC formulation introduces shared variables that can complicate the interpretation within the MLMFG context because the variables may be unilaterally changed by each leader as they desires, though the variables are essentially followers' strategies.

The second approach addresses this issue by considering the best response of the followers' noncooperative game given by leaders' strategies and integrating them into each leader's optimization problem \cite{Herty2022,Hu2011,Hu2013}; we call this technique the best response approach.
The problem does not explicitly make the followers' strategies to appear; that is, the resultant problem is a simple Nash equilibrium problem among leaders. In general the resultant problem still has complicated objective functions, non-smooth and non-convex in each leader's optimization problem, but this approach allows us to adopt a usual technique used for solving Nash equilibrium problems by smoothing the followers' response.

The smoothing method for response functions have particularly been a focus in MPEC over the decades.
To the best of our knowledge, Facchinei et al. \cite{Facchinei1999} first considered the smoothing method for the response function in the MPEC, which means that the response function is characterized by the lower-level equilibrium constraint parametrized by the upper-level variable.
They showed the convergence to the Clarke stationary point of the MPEC as the smoothing parameter converges to 0.
Chen and Fukushima \cite{Chen2004} then proposed the same method for the MPEC where the lower-level equilibrium constraint is a P-matrix linear complementarity constraint, and they showed the convergence to the Bouligand stationary point of the MPEC.
Hu and Fukushima \cite{Hu2012} extended this approach to EPEC, confirming convergence to the Bouligand stationary point, which satisfies the Bouligand stationarity for each MPEC.
To date, however, no studies have confirmed such convergence for MPEC and EPEC with nonlinear complementarity constraints at the lower-level.

To the best our knowledge, the best response approach in the class of the MLMFG has only been studied in the case of quadratic games where the followers' response is written in closed form by Hu and Fukushima \cite{Hu2011,Hu2013} and Herty et al. \cite{Herty2022}.
They demonstrated the existence of the leader--follower Nash equilibrium, where no one has incentive to change their strategy in both levels, for the quadratic game; each player solves a convex quadratic programming problem.
Hu and Fukushima \cite{Hu2011} considered the quadratic game when one follower solves equality constrained convex quadratic programming.
In \cite{Hu2013}, they then considered the same class of the MLMFG under uncertainty and demonstrated the existence and uniqueness of robust Nash equilibrium.
Herty et al. \cite{Herty2022} extended the class to which one follower solves linear inequality constrained convex quadratic programming, and they proposed the smoothing technique of the follower's response function.

However, there are many cases that cannot be formulated as a quadratic game in real-world applications, and in such a case, the followers' response may no longer be obtained explicitly. 
For example, as often used in micro economics, the utility function is often characterized by a logarithm or an exponential function.
Not only in economics but in optimal resource allocation in edge computing, Lyu et al. \cite{Lyu2022} used log-utility function for follower's optimization.
Moreover, in the blockchain based cloud/edge computing network, the utility function for followers is characterized by a fraction, e.g., Xiong et al. \cite{Xiong2019}.

Currently, the study on the existence of the equilibria in the MLMFG is very limited with the best response approach because demonstrating the existence of leader--follower Nash equilibrium essentially requires that the each leader's problem is convex.
However, identifying the convexity of the problem often requires the response function to be explicitly written, which can be very difficult except in special cases as we introduced above 
\cite{Herty2022,Hu2011,Hu2013}.

In this paper, we propose the best response approach for a more general class of MLMFG with a smoothing method based on Facchinei et al. \cite{Facchinei1999}.
Using this technique allows MLMFG to be solved by a single-level (differentiable) variational inequality regarding the leaders' Nash game.
Meanwhile, in this class there may not exist the (global) leader--follower Nash equilibrium, and also if it exists, finding the equilibrium is NP-hard in general since the objective function of each leader is not convex.
Hence, we concentrate on a weaker concept of the equilibrium, \emph{stationary Nash equilibrium}, as the first-order condition for the \emph{local} leader--follower Nash equilibrium.
Then we demonstrate that the solution of the (smoothed) approximated variational inequality converges to the stationary Nash equilibrium of MLMFG as the smoothing parameter gets $0$.
We also report the results of numerical experiments conducted and illustrate the behavior of the proposed method.

In summary, our contributions bridge the following gaps in existing studies:
\begin{enumerate}
    \item Unlike \cite{Herty2022} and \cite{Hu2011}, which focus on cases with explicitly calculated follower(s)' response, we extend their framework to cases where the explicit form of follower(s)' response is not analytically obtainable; 
    \item In contrast to \cite{Hu2011}, which considers only equality constraints in the followers' optimization problems, our paper considers cases involving nonlinear inequality constraints;
    \item This paper may be regarded as the first to demonstrate convergence to the Bouligand stationary point in a response-based smoothing method for EPECs with nonlinear complementarity constraints associated with MLMFG.
\end{enumerate}

The remainder of this paper is organized as follows: Section~\ref{sec:prelim} outlines mathematical concepts and basic noncooperative theory used in our approach.
Section~\ref{sec:MLF} describes the MLMFG and its solution concepts.
Section~\ref{sec:smoothing} introduces a smoothing method for the MLMFG and then analyzes its convergence to stationary Nash equilibrium.
Section~\ref{sec:exp} reports on numerical experiments that illustrate the effectiveness of our proposed method with a toy example.
Finally, Section~\ref{sec:conclusion} offers some concluding remarks.

\section{Preliminaries}\label{sec:prelim}

This section provides some fundamental concepts about convex analysis and Nash games.
Throughout this paper we use the following notations:
Let $F\colon\Re^n\to\Re^m$ be differentiable, $\nabla F(x):=[\nabla F_1(x),\dots,\nabla F_m(x)]$ is the transposed Jacobian matrix of $F$ at $x \in \Re^n$; we simply call $\nabla F(x)$ the Jacobian matrix of $F(x)$.
For vectors $a\in\Re^n$ and $b\in\Re^n$, $a\perp b$ denotes $a^\top b=0$.

\begin{definition}[{\cite[Definition 2.6.1]{Clarke1990}}]\label{def:generalized.Jacobian}
    The (transposed) \emph{generalized Jacobian} of $F\colon\Re^n\to\Re^m$ at $x$, denoted as $\partial F(x)$, is the convex hull of all $n\times m$ matrices $W$ obtained as the limit of a sequence of the form $\nabla F(x^k)$, where $x^k\to x$ and $x^k\in\mathcal{D}_F$.
    Here, $\mathcal{D}_F\subset\Re^n$ is the set of which $F$ is differentiable.
\end{definition}
Symbolically, one has
$$
    \partial F(x)=\co\left\{\lim_{x^k\to x}\nabla F(x^k)\ \middle|\ x^k\in\mathcal{D}_F\right\},
$$
where $\co$ denotes the convex hull of a set.

For a real-valued function $\psi\colon\Re^n\to\Re$, a \emph{directional derivative} $\psi'(x;d)$ of $\psi$ at $x\in\Re^n$ in the direction $d\in\Re^n$ is defined to be
\begin{align}\label{eq:directional.derivative}
    \psi'(x;d):=\lim_{\tau\downarrow0}\frac{f(x+\tau d)-f(x)}{\tau},
\end{align}
when the limit exists.
The \emph{Clarke generalized directional derivative} $\psi^\circ(x;d)$ of the function $\psi$ at $x$ in the direction $d\in\Re^n$ is defined as
$$
    \psi^\circ(x;d):=\underset{y\to x,\tau\downarrow0}{\lim\sup}\frac{\psi(y+\tau d)-\psi(y)}{\tau}.
$$

The regularity of a real-valued function is defined as follows.
\begin{definition}\label{def:regular}
    A function $\psi\colon\Re^n\to\Re$ is \emph{regular} at $x\in\Re^n$ if, for every $d\in\Re^n$, the directional derivative $\psi'(x;d)$ exists and satisfies
    $$
        \psi'(x;d)=\psi^\circ(x;d).
    $$
    Moreover, a vector-valued function $\Psi\colon\Re^n\to\Re^m$ is regular if each element of the function $\psi_i$, $i=1,\dots,m$, is regular.
\end{definition}

The tangent cone ${\cal T}_X(x)$ of $X\subset\Re^n$ at $x$ is defined by\small
$$
    {\cal T}_X(x):=\left\{d\in\Re^n\ \middle|\ d=\lim_{\nu\to\infty}\alpha_\nu(x^\nu-x),\lim_{\nu\to\infty}x^\nu=x,x^\nu\in X,\alpha_\nu\ge0,\nu=1,2,\dots \right\},
$$\normalsize
and the normal cone ${\cal N}_X(x)$ of $X\subset\Re^n$ at $x$ is defined as the set of points $v$ if there exist sequences $\{x^k\}\subset X$ and $\{v^k\}$ with
$$
    x^k\to x,\ v^k\to v,\ v^k\in{\cal T}_X(x^k)^\circ\ \ \forall k,
$$
where ${\cal T}_X(x)^\circ:=\{y\in\Re^n\mid\langle y,z\rangle\le0\ \ \forall z\in{\cal T}_X(x)\}$ denotes the polar cone of ${\cal T}_X(x)$.

\begin{definition}\label{def:regular.set}
    The set $X\subset\Re^n$ is \emph{regular at $x\in X$} if
    $$
        {\cal N}_X(x)={\cal T}_{X}(x)^\circ.
    $$
    Furthermore, $X$ is (simply called) \emph{regular} if $X$ is regular at all $x\in X$.
\end{definition}

The following lemma is a sufficient condition for the regularity of a set.
\begin{lemma}[{\cite[Proposition 4.6.3]{Bertsekas2003}}]\label{lem:regular.sufficient}
    If $X\subset\Re^n$ is convex, then $X$ is regular, and the normal cone, ${\cal N}_X(x)={\cal T}_X(x)^\circ$ under regularity, of $X$ at $x\in X$ is equivalent to
    $$
        {\cal N}_X(x)=\{d\in\Re^n\mid \langle d,z-x\rangle\le0\quad\forall z\in X\}.
    $$
\end{lemma}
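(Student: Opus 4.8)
The plan is to prove the displayed identity ${\cal N}_X(x)=\{d\in\Re^n\mid\langle d,z-x\rangle\le0\ \ \forall z\in X\}$ directly, and to observe that the set on the right is simultaneously equal to ${\cal T}_X(x)^\circ$, which is exactly the assertion that $X$ is regular at $x$. For $y\in X$ write $K(y):=\{d\in\Re^n\mid\langle d,z-y\rangle\le0\ \ \forall z\in X\}$, a closed convex cone (an intersection of closed half-spaces through the origin), and $D(y):=\{\alpha(z-y)\mid\alpha\ge0,\ z\in X\}$ for the cone of feasible directions.

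First I would identify the tangent cone at an arbitrary base point $y\in X$ with the closure of $D(y)$. The inclusion $D(y)\subseteq{\cal T}_X(y)$ uses convexity: for $d=\alpha(z-y)$ with $\alpha>0$ and $\nu$ large one has $y+\tfrac1\nu d=(1-\tfrac\alpha\nu)y+\tfrac\alpha\nu z\in X$, so $d=\nu\bigl((y+\tfrac1\nu d)-y\bigr)\in{\cal T}_X(y)$ (and $0\in{\cal T}_X(y)$ trivially); since ${\cal T}_X(y)$ is closed, $\overline{D(y)}\subseteq{\cal T}_X(y)$. The reverse inclusion ${\cal T}_X(y)\subseteq\overline{D(y)}$ is immediate from the definition of the tangent cone, because each $\alpha_\nu(y^\nu-y)$ lies in $D(y)$. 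Hence ${\cal T}_X(y)=\overline{D(y)}$, and since polarity is insensitive to closure, ${\cal T}_X(y)^\circ=D(y)^\circ=K(y)$; the last equality is a one-line check, as the requirement $\langle d,\alpha(z-y)\rangle\le0$ for all $\alpha\ge0$ reduces to $\langle d,z-y\rangle\le0$.

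Next I would show ${\cal N}_X(x)=K(x)$. For the inclusion $K(x)\subseteq{\cal N}_X(x)$: given $v\in K(x)={\cal T}_X(x)^\circ$, the constant sequences $x^k\equiv x$ and $v^k\equiv v$ satisfy $x^k\to x$, $v^k\to v$, and $v^k\in{\cal T}_X(x^k)^\circ$ for all $k$, so $v\in{\cal N}_X(x)$ by definition. For the inclusion ${\cal N}_X(x)\subseteq K(x)$: take $v\in{\cal N}_X(x)$ with witnessing sequences $x^k\to x$ in $X$ and $v^k\to v$ with $v^k\in{\cal T}_X(x^k)^\circ=K(x^k)$, where the last equality is the first step applied at the base point $x^k$. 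Thus $\langle v^k,z-x^k\rangle\le0$ for every $z\in X$ and every $k$; fixing $z\in X$ and letting $k\to\infty$ gives $\langle v,z-x\rangle\le0$, i.e. $v\in K(x)$. Combining both inclusions proves the displayed formula, and since $K(x)={\cal T}_X(x)^\circ$ was established above, ${\cal N}_X(x)={\cal T}_X(x)^\circ$. As $x\in X$ was arbitrary, $X$ is regular.

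I do not expect a genuine obstacle: the statement is classical and every step is short. The only point that needs a little care is that the limiting definition of ${\cal N}_X(x)$ requires passing to the limit in the inequality $\langle v^k,z-x^k\rangle\le0$, which is legitimate precisely because each polar ${\cal T}_X(x^k)^\circ$ admits the same half-space description $K(x^k)$ — so the tangent-cone computation of the second paragraph genuinely has to be carried out at a general base point $y\in X$, not only at $x$ itself.
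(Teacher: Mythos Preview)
Your argument is correct. The identification ${\cal T}_X(y)=\overline{D(y)}$ via convexity, the resulting description ${\cal T}_X(y)^\circ=K(y)$, and the two-inclusion verification of ${\cal N}_X(x)=K(x)$ using the limiting definition of the normal cone are all sound; your remark that the tangent-cone computation must be carried out at a \emph{general} base point $y$ (not just at $x$) so that the inequality $\langle v^k,z-x^k\rangle\le0$ can be passed to the limit is exactly the subtle point, and you handle it cleanly.

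As for comparison: the paper does not supply its own proof of this lemma at all---it simply cites \cite[Proposition~4.6.3]{Bertsekas2003} and moves on. So your proposal is not a different route to the same argument but rather a self-contained elementary proof where the paper defers entirely to the reference. What your approach buys is independence from the cited source; what the paper's approach buys is brevity, since the result is classical and not the paper's contribution.
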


Next we consider an $N$-player Nash equilibrium problem (NEP).
Player labeled with $\nu\in\{1,\dots,N\}$ has $x^\nu\in\Re^{n_\nu}$ as the strategy vector and $X^\nu\subset\Re^{n_\nu}$ as the strategy set.
Player $\nu$ solves the following optimization problem:
\begin{align}\label{prob:i-th}
    \min_{x^\nu\in\Re^{n_\nu}}\theta^\nu(x^\nu,x^{-\nu}) \qquad \text{s.t. } x^\nu\in X^\nu,
\end{align}
where $x^{-\nu}:=(x^1,\dots,x^{\nu-1},x^{\nu+1},\dots,x^N)\in\Re^{n-n_\nu}$ denotes a tuple of strategies except player $\nu$'s one.
Let $n:=n_1+\dots+n_N$ and the function $\theta^\nu\colon\Re^n\to\Re$ be continuously differentiable.

\begin{definition}[{Nash equilibrium}]
    A tuple of strategies $x^*:=(x^{*,1},\dots,x^{*,N})$ is called a \emph{Nash equilibrium} if for each $\nu$,
    $$
        x^{*,\nu}\in\arg\min_{x^\nu\in X^\nu}\theta^\nu(x^\nu,x^{*,-\nu}).
    $$
\end{definition}

In other words, a Nash equilibrium is a tuple of strategies in which no one can reduce their cost unilaterally.
However, a Nash equilibrium does not always exist in general, and it is difficult to find even if it exists.

The NEP may be characterized by a variational inequality (VI).
Let $\theta^\nu$ be differentiable, and define $X:=X^1\times\dots\times X^N$ and
\begin{align}\label{vi:Fmapping}
    F(x):=\left[\begin{array}{c}\nabla_{x^1}\theta^1(x^1,x^{-1})\\ \vdots \\ \nabla_{x^N}\theta^N(x^N,x^{-N}) \end{array}\right].
\end{align}

\begin{proposition}[{Proposition 1.4.2 \cite{Facchinei2003}}]\label{prop:Nasheq.VI}
    Assume that $\theta^\nu(\cdot,x^{-\nu})$ is convex for any $x^{-\nu}\in\Re^{n-n_\nu}$, and $X^\nu\subset\Re^{n_\nu}$ is nonempty, closed, and convex.
    Then, a tuple of strategies $x^*$ is a Nash equilibrium if and only if $x^*$ is a solution to the following VI:
    \begin{align}\label{vi.ne}
        \langle F(x^*), x-x^*\rangle\ge0\quad\forall x\in X.
    \end{align}
\end{proposition}

Notice that even if each player's optimization \eqref{prob:i-th} is convex, the existence of Nash equilibrium is not guaranteed.
In other words, the solution to variational inequality \eqref{vi.ne} may not exist in general.
The following proposition ensures the existence of Nash equilibrium in an $N$-player NEP.

\begin{proposition}[{Aubin \cite{Aubin1979}}]\label{prop:existence.Nash.eq}
    Suppose that the assumptions of Proposition~\ref{prop:Nasheq.VI} hold.
    Assume that $X^\nu\subset\Re^{n_\nu}$ is compact for all $\nu\in\{1,\dots,N\}$.
    Then, the Nash equilibrium of the NEP in which player $\nu$ solves \eqref{prob:i-th} exists.
\end{proposition}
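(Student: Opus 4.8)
The plan is to reduce the statement to the existence of a solution of the variational inequality \eqref{vi.ne} via Proposition~\ref{prop:Nasheq.VI}, and then to obtain such a solution by a fixed-point argument. First I would note that, under the stated hypotheses, the product set $X=X^1\times\dots\times X^N$ is nonempty, closed, and convex, and — being a finite product of compact sets — compact; moreover, since each $\theta^\nu$ is continuously differentiable, each partial gradient $\nabla_{x^\nu}\theta^\nu$ is continuous, so the map $F$ in \eqref{vi:Fmapping} is continuous on $X$. By Proposition~\ref{prop:Nasheq.VI} it therefore suffices to produce $x^*\in X$ with $\langle F(x^*),x-x^*\rangle\ge0$ for all $x\in X$.

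Next I would introduce the natural map $\Phi\colon X\to X$ defined by $\Phi(x):=\proj_X(x-F(x))$, where $\proj_X$ is the Euclidean projection onto $X$. Because $X$ is nonempty, closed, and convex, $\proj_X$ is well defined and nonexpansive, so $\Phi$ is a continuous self-map of the nonempty compact convex set $X$; Brouwer's fixed-point theorem then yields $x^*\in X$ with $x^*=\proj_X\!\left(x^*-F(x^*)\right)$. The last step is to translate this fixed-point identity into the VI: by the variational characterization of the projection onto a convex set — equivalently, by the description of the normal cone of a convex set in Lemma~\ref{lem:regular.sufficient} — the identity $x^*=\proj_X(x^*-F(x^*))$ is equivalent to $\langle (x^*-F(x^*))-x^*,\,x-x^*\rangle\le0$ for all $x\in X$, i.e.\ to $\langle F(x^*),x-x^*\rangle\ge0$ for all $x\in X$. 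Hence $x^*$ solves \eqref{vi.ne}, and Proposition~\ref{prop:Nasheq.VI} gives that $x^*$ is a Nash equilibrium, as required.

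I expect the routine parts to be the continuity of $F$, the nonexpansiveness of $\proj_X$, and the product structure of $X$; the only genuine content is the application of Brouwer's theorem. As an alternative one could bypass the VI reformulation entirely and apply Kakutani's fixed-point theorem directly to the best-response correspondence $x\mapsto\prod_{\nu=1}^N\arg\min_{x^\nu\in X^\nu}\theta^\nu(x^\nu,x^{-\nu})$, whose values are nonempty (Weierstrass, using compactness of each $X^\nu$) and convex (convexity of $\theta^\nu(\cdot,x^{-\nu})$ on the convex set $X^\nu$), and which maps the compact convex set $X$ into itself; a fixed point of this correspondence is by definition a Nash equilibrium. The main obstacle in that variant is verifying that the argmin correspondence has closed graph (equivalently, is upper semicontinuous), which follows from joint continuity of $\theta^\nu$ together with compactness of $X^\nu$ but must be checked carefully — whereas the VI route isolates the topological difficulty into the single, standard Brouwer step.
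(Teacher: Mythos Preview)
Your argument is correct: the VI reformulation via Proposition~\ref{prop:Nasheq.VI}, followed by Brouwer applied to the projection map $x\mapsto\proj_X(x-F(x))$, is a standard and complete proof of existence under the stated hypotheses, and your alternative via Kakutani on the best-response correspondence is equally valid.

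There is nothing to compare, however, because the paper does not supply its own proof of this proposition: it is stated with attribution to Aubin~\cite{Aubin1979} and left unproved. The paper does invoke essentially the same VI-existence fact later, in the proof of Proposition~\ref{prop:existence.stationary.Nash}, where it appeals to \cite[Corollary~2.2.5]{Facchinei2003} (continuous $F$ on a nonempty compact convex $X$ implies the solution set of the VI is nonempty and compact); that corollary is exactly what your Brouwer/projection argument proves from scratch. So your write-up is a self-contained justification of a result the paper treats as classical.
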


The uniqueness of Nash equilibrium is stated as below.

\begin{proposition}\label{prop:uniquenss}
    Suppose that the assumptions of Proposition~\ref{prop:existence.Nash.eq} hold.
    Assume that the mapping $F\colon\Re^n\to\Re$ defined in \eqref{vi:Fmapping} is strictly monotone on $X\subset\Re^n$, i.e., 
    $$
        \langle F(x)-F(x'),x-x'\rangle>0\quad\forall x,x'\in X \text{ such that } x\neq x'.
    $$
    Then, the solution to VI \eqref{vi.ne} is unique, and it is a Nash equilibrium.
\end{proposition}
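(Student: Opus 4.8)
The plan is to split the statement into two independent parts: that a solution of~\eqref{vi.ne} exists and is a Nash equilibrium, and that such a solution is unique. For the first part I would simply chain the two results already available. By Proposition~\ref{prop:existence.Nash.eq}, the assumptions inherited here (convexity of each $\theta^\nu(\cdot,x^{-\nu})$, and nonemptiness, closedness, convexity, and compactness of each $X^\nu$) guarantee that a Nash equilibrium $x^*$ of the NEP exists. Since the hypotheses of Proposition~\ref{prop:Nasheq.VI} are also in force, that proposition tells us $x^*$ solves the VI~\eqref{vi.ne}; read in the converse direction, it also tells us that \emph{every} solution of~\eqref{vi.ne} is a Nash equilibrium. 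Hence the solution set is nonempty and every element of it is a Nash equilibrium, so only uniqueness remains.

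For uniqueness I would use the standard pairing argument for variational inequalities, by contradiction. Suppose $x^*$ and $\bar{x}$ both solve~\eqref{vi.ne} with $x^*\neq\bar{x}$. Since $\bar{x}\in X$, testing the VI at $x^*$ gives $\langle F(x^*),\bar{x}-x^*\rangle\ge0$; since $x^*\in X$, testing the VI at $\bar{x}$ gives $\langle F(\bar{x}),x^*-\bar{x}\rangle\ge0$. Adding these yields $\langle F(x^*)-F(\bar{x}),\bar{x}-x^*\rangle\ge0$, equivalently $\langle F(x^*)-F(\bar{x}),x^*-\bar{x}\rangle\le0$, which directly contradicts the strict monotonicity of $F$ on $X$. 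Therefore $x^*=\bar{x}$, and combining with the existence established above, the VI has exactly one solution, which is the (unique) Nash equilibrium.

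There is essentially no serious obstacle in this argument; it is the textbook two-inequalities-and-add trick. The only point deserving care is the logical sequencing: strict monotonicity by itself yields \emph{at most} one solution but says nothing about existence, so it is the compactness hypothesis carried over from Proposition~\ref{prop:existence.Nash.eq} that makes the solution set nonempty, and only the combination of the two facts gives ``exactly one.'' I would also remark in passing that strict monotonicity of $F$ is consistent with (indeed it implies) the convexity of each $\theta^\nu(\cdot,x^{-\nu})$ already assumed, so the hypotheses do not conflict.
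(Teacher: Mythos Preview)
Your proposal is correct and follows the same decomposition as the paper: existence from Proposition~\ref{prop:existence.Nash.eq} (together with Proposition~\ref{prop:Nasheq.VI} for the equivalence with the VI), and ``at most one'' from strict monotonicity. The only difference is cosmetic: the paper dispatches the uniqueness step by citing \cite[Theorem~2.3.3]{Facchinei2003}, whereas you write out the standard two-inequalities-and-add contradiction argument that underlies that theorem; your version is thus more self-contained but not a different route.
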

\begin{proof}
    \cite[Theorem 2.3.3]{Facchinei2003} ensures that the strictly monotone VI has at most one solution.
    By the existence result from Proposition~\ref{prop:existence.Nash.eq}, the Nash equilibrium uniquely exists.
\end{proof}

\section{The multi-leader--multi-follower games}\label{sec:MLF}

Consider an MLMFG of $N$ leaders and $M$ followers.
Let $X^\nu\subset\Re^{n_\nu}$ and $\theta^\nu\colon\Re^{n+m}\to\Re$ be the strategy set and cost function of leader $\nu\in\{1,\dots,N\}$, respectively, where $m:=m_1+\dots+m_M$. 
Let $Y^\omega(x)\subset\Re^{m_\omega}$ and $\gamma^\omega\colon\Re^{n+m}\to\Re$ be the strategy set and cost function of follower $\omega\in\{1,\dots,M\}$, respectively.

For a fixed all followers' strategies $y\in\Re^m$, determined in the future, leader $\nu$ solves the following optimization problem: 
\begin{align}\label{prob:leader.i-th}
    \min_{x^\nu\in\Re^{n_\nu}}\theta^\nu(x^\nu,x^{-\nu},y)\qquad\text{s.t. }x^\nu\in X^\nu.
\end{align}
After all leaders simultaneously determine their strategies $x\in X:=X^1\times\dots\times X^N$, follower $\omega$ solves the following optimization problem:
\begin{align}\label{prob:follower.j-th}
    \min_{y^\omega\in\Re^{m_\omega}}\gamma^\omega(x,y^\omega,y^{-\omega})\qquad\text{s.t. } y^\omega\in Y^\omega(x).
\end{align}
We can also consider the case in which the constraint $Y^\omega(x)$ of follower $\omega$'s problem also depends on $y^{-\omega}$, i.e., $Y^\omega(x,y^{-\omega})$, referred to as a \emph{generalized} Nash equilibrium problem (GNEP).
Finding an equilibrium of GNEP, however, is also technical even in a single-level Nash game, which is not the scope of this paper.
Let ${\cal S}(x)$ be a set of Nash equilibria in followers' Nash game. The equilibrium concept of the MLMFG is considered as follows \cite{Hu2015}.


\begin{definition}\label{def:LFNash}
    A tuple of strategies $(x^*,y^*)=(x^{*,1},\dots,x^{*,N},y^{*,1},\dots,y^{*,M})\in X\times{\cal S}(x^*)$ is referred to as a \emph{pessimistic leader--follower (LF) Nash equilibrium} if the following conditions simultaneously hold:
    \begin{align}\label{def.eq:pessimistic.LF.Nash}
        x^{*,\nu}\in\underset{x^\nu\in X^\nu}{\arg\min}\max_{y\in{\cal S}(x^\nu,x^{*,-\nu})}\theta^\nu(x^\nu,x^{*,-\nu},y)\quad\forall\nu\in\{1,\dots,N\},
    \end{align}
    
    A tuple of strategies $(x^*,y^*)=(x^{*,1},\dots,x^{*,N},y^{*,1},\dots,y^{*,M})\in X\times {\cal S}(x^*)$ is referred to as a \emph{optimistic leader--follower (LF) Nash equilibrium} if the following conditions simultaneously hold:
    \begin{align}\label{def.eq:optimistic.LF.Nash}
        x^{*,\nu}\in\underset{x^\nu\in X^\nu}{\arg\min}\min_{y\in{\cal S}(x^\nu,x^{*,-\nu})}\theta^\nu(x^\nu,x^{*,-\nu},y)\quad\forall\nu\in\{1,\dots,N\},
    \end{align}

    If ${\cal S}(x)$ is a singleton for any $x$, i.e, there exists a unique followers' Nash equilibrium for any given leaders' strategies, both equilibrium concepts are equivalent; hence we simply call the equilibrium point a \emph{leader--follower (LF) Nash equilibrium}.
\end{definition}

Unfortunately, the pessimistic LF Nash equilibrium may not exist even if $\theta^\nu$ is continuous and $X^\nu$ is compact since
$$
\varphi(x^\nu,x^{-\nu})=\max_{y\in{\cal S}(x^\nu,x^{-\nu})}\theta^\nu(x^\nu,x^{-\nu},y)
$$
is not necessarily lower semicontinuous with respect to $x^\nu$, which implies that there may not exist the minimizers of $\varphi(x^\nu,x^{-\nu})$.
In this paper, we impose that ${\cal S}(x)$ is a singleton for every $x\in X$ to avoid such a complicated situation; that is, $y$ is uniquely determined depending on $x$.
Then, to emphasize that $y$ is a function of $x$, we rewrite ${\cal S}(x)$ as $y(x)$ and call it a \emph{response} function.
The sufficient condition for the uniqueness of followers' Nash equilibrium will be given later.



Plugging $y(x)$ into each leader's problem \eqref{prob:leader.i-th} leads that the MLMFG comprised of \eqref{prob:leader.i-th}--\eqref{prob:follower.j-th} can be reformulated to the following single-level Nash equilibrium problem among leaders: Leader $\nu\in\{1,\dots,N\}$ solves
\begin{align}\label{prob:leader.i-th.reduced}
    \min_{x^\nu\in\Re^{n_\nu}}\quad\Theta^\nu(x^\nu,x^{-\nu}):=\theta^\nu(x^\nu,x^{-\nu},y(x^\nu,x^{-\nu}))\qquad\text{s.t.}\quad x^\nu\in X^\nu.
\end{align}
We call \eqref{prob:leader.i-th.reduced} a \emph{reduced} problem of \eqref{prob:leader.i-th}, and the single-level game in which leader $\nu$ solves \eqref{prob:leader.i-th.reduced} is defined as {\NEP}.
By the definition of response function $y(x)$, the following statement immediately holds.
\begin{proposition}\label{prop:equivalence.GNEP.MLF}
    Let $x^*\in X$ be a Nash equilibrium of {\NEP}.
    Then, $(x^*,y(x^*))$ is an LF Nash equilibrium of the MLMFG.
\end{proposition}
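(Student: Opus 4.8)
The plan is to unfold both definitions and to observe that, under the standing assumption that ${\cal S}(x)$ is a singleton for every $x\in X$, the defining condition of a Nash equilibrium of {\NEP} is \emph{verbatim} the defining condition of an LF Nash equilibrium of the MLMFG. So the proof will essentially be a matter of bookkeeping, and I would keep it short.

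First I would record the lower-level requirement. By the very definition of the response function, $y(x^*)$ is the unique element of ${\cal S}(x^*)$, so $(x^*,y(x^*))\in X\times{\cal S}(x^*)$, which is precisely the membership condition appearing in Definition~\ref{def:LFNash}; in particular it is automatic rather than an extra hypothesis to be checked. Next I would treat the upper-level condition. Fix $\nu\in\{1,\dots,N\}$. Since ${\cal S}(x^\nu,x^{*,-\nu})=\{y(x^\nu,x^{*,-\nu})\}$ is a singleton, both the inner $\max$ in \eqref{def.eq:pessimistic.LF.Nash} and the inner $\min$ in \eqref{def.eq:optimistic.LF.Nash} collapse to $\theta^\nu(x^\nu,x^{*,-\nu},y(x^\nu,x^{*,-\nu}))$, which by the definition \eqref{prob:leader.i-th.reduced} of the reduced cost is exactly $\Theta^\nu(x^\nu,x^{*,-\nu})$. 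Hence the condition $x^{*,\nu}\in\arg\min_{x^\nu\in X^\nu}\Theta^\nu(x^\nu,x^{*,-\nu})$, which holds because $x^*$ is a Nash equilibrium of {\NEP}, is identical to $x^{*,\nu}\in\arg\min_{x^\nu\in X^\nu}\max_{y\in{\cal S}(x^\nu,x^{*,-\nu})}\theta^\nu(x^\nu,x^{*,-\nu},y)$ and to its optimistic counterpart. Letting $\nu$ range over $\{1,\dots,N\}$ establishes \eqref{def.eq:pessimistic.LF.Nash} and \eqref{def.eq:optimistic.LF.Nash} simultaneously, so $(x^*,y(x^*))$ is an LF Nash equilibrium, and since ${\cal S}$ is single-valued there is no distinction between the pessimistic and optimistic notions.

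There is essentially no obstacle here: the statement is an immediate consequence of the singleton assumption on ${\cal S}(\cdot)$ together with the definition of $\Theta^\nu$. The only point that deserves a sentence of care is making explicit that it is exactly the passage from the set ${\cal S}(x)$ to the single-valued response $y(x)$ that causes the inner $\max$ and $\min$ to coincide, and that the lower-level feasibility $y^*=y(x^*)\in{\cal S}(x^*)$ holds by construction of $y(\cdot)$. Everything else is a substitution of definitions.
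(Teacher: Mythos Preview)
Your proposal is correct and follows exactly the approach the paper intends: the paper does not give a separate proof but simply notes, immediately before the proposition, that ``by the definition of response function $y(x)$, the following statement immediately holds.'' Your argument is just a careful unpacking of that remark---collapsing the inner $\max$/$\min$ in Definition~\ref{def:LFNash} to $\Theta^\nu$ via the singleton assumption on ${\cal S}(\cdot)$---and contains no gaps.
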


Proposition~\ref{prop:equivalence.GNEP.MLF} indicates that under the uniqueness of followers' Nash equilibrium $y(x)$, it is enough to only consider {\NEP} instead of the MLMFG \eqref{prob:leader.i-th}-\eqref{prob:follower.j-th}.
By utilizing the reduction technique into a single-level NEP, the existence of LF Nash equilibrium can be stated as below.
\begin{proposition}\label{prop:existence.L/F.Nash}
    Assume the following conditions:
    \begin{itemize}
        \item For any tuple of leaders' strategies $x\in X$, there exists a unique lower-level response $y(x)$;
        \item For any $\nu$, the leaders' objectives $\theta^\nu$ and the best response function $y$ are continuous;
        \item For any $\nu$, the strategy set $X^\nu$ is nonempty, convex and compact;
        \item For any $\nu$, the composition function $\Theta^\nu(x^\nu,x^{-\nu})=\theta^\nu(x^\nu,x^{-\nu},y(x^\nu,x^{-\nu}))$ is convex with respect to $x^\nu$ for any fixed $x^{-\nu}$.
    \end{itemize}
    Then, an LF Nash equilibrium of the MLMFG exists.
\end{proposition}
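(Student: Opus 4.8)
The plan is to reduce to the single-level game {\NEP} via Proposition~\ref{prop:equivalence.GNEP.MLF}: once we exhibit a Nash equilibrium $x^*\in X$ of the game in which leader $\nu$ solves \eqref{prob:leader.i-th.reduced}, the pair $(x^*,y(x^*))$ is automatically an LF Nash equilibrium of the MLMFG (the singleton assumption on ${\cal S}(x)$ makes the optimistic and pessimistic notions coincide). So the entire burden is to prove that {\NEP} admits a Nash equilibrium under the four standing hypotheses.

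For that, I would \emph{not} go through the variational-inequality characterization used in Proposition~\ref{prop:existence.Nash.eq}, because here $\Theta^\nu$ is only assumed continuous — the response $y$ need not be differentiable — so the mapping $F$ of \eqref{vi:Fmapping} need not even be well defined. Instead I would invoke the Debreu--Glicksberg--Fan existence theorem for Nash equilibria, checking its hypotheses one by one: each $X^\nu$ is nonempty, convex and compact (assumed); each $\Theta^\nu$ is continuous on $X=X^1\times\cdots\times X^N$, being the composition of the continuous map $x\mapsto(x,y(x))$ with the continuous function $\theta^\nu$; and for each fixed $x^{-\nu}$ the function $\Theta^\nu(\cdot,x^{-\nu})$ is convex, hence quasi-convex, on $X^\nu$, which is precisely the last assumption.

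Concretely I would make the fixed-point argument explicit. For each $\nu$ define the best-response correspondence $x^{-\nu}\mapsto B^\nu(x^{-\nu}):=\arg\min_{x^\nu\in X^\nu}\Theta^\nu(x^\nu,x^{-\nu})$. Compactness of $X^\nu$ and continuity of $\Theta^\nu$ in $x^\nu$ make each $B^\nu(x^{-\nu})$ nonempty by Weierstrass; convexity of $X^\nu$ together with convexity of $\Theta^\nu(\cdot,x^{-\nu})$ makes it convex; and Berge's maximum theorem, using the joint continuity of $\Theta^\nu$ and compactness of $X^\nu$, makes $B^\nu$ upper semicontinuous with closed graph. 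Then $B(x):=B^1(x^{-1})\times\cdots\times B^N(x^{-N})$ is a nonempty-, convex-, compact-valued, upper semicontinuous self-map of the nonempty compact convex set $X$, so Kakutani's fixed-point theorem yields $x^*\in B(x^*)$, i.e.\ $x^{*,\nu}\in\arg\min_{x^\nu\in X^\nu}\Theta^\nu(x^\nu,x^{*,-\nu})$ for all $\nu$; this is a Nash equilibrium of {\NEP}, and Proposition~\ref{prop:equivalence.GNEP.MLF} finishes the proof. The only genuinely delicate point — the ``main obstacle'' — is the absence of smoothness, which closes off the clean VI route of Section~\ref{sec:prelim} and forces the topological argument; within it, the step that really uses the hypotheses nontrivially is the upper semicontinuity of the best-response map, which rides on the continuity of the response function $y$ (itself a consequence of the standing uniqueness assumption on ${\cal S}(x)$). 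The remaining verifications are routine.
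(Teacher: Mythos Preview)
Your argument is correct. The paper's own proof is a one-liner: ``The assertion is immediately shown by Proposition~\ref{prop:existence.Nash.eq},'' i.e.\ it simply invokes Aubin's existence theorem for convex games on compact convex strategy sets and then (implicitly) Proposition~\ref{prop:equivalence.GNEP.MLF}. So the overall architecture---reduce to {\NEP}, establish a Nash equilibrium there, lift via Proposition~\ref{prop:equivalence.GNEP.MLF}---is identical to yours.

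The difference is one of packaging. The paper outsources the existence step to the Aubin citation behind Proposition~\ref{prop:existence.Nash.eq}, whereas you unpack that black box into an explicit Berge--Kakutani argument. Your caution about differentiability is well placed: Section~\ref{sec:prelim} carries a standing assumption that each $\theta^\nu$ is continuously differentiable, and Proposition~\ref{prop:existence.Nash.eq} is phrased to inherit the hypotheses of Proposition~\ref{prop:Nasheq.VI}, so a literal reading would require $\Theta^\nu$ to be smooth---which is not assumed here. Aubin's underlying theorem, however, needs only continuity and (quasi-)convexity, so the paper's appeal is morally fine even if the internal cross-reference is slightly loose. Your version closes that gap by making the fixed-point argument self-contained, at the cost of a few extra lines; the paper's version is terser but leans on the reader to know that Aubin's result does not actually require smoothness.
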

\begin{proof}
    The assertion is immediately shown by Proposition~\ref{prop:existence.Nash.eq}.
\end{proof}


Regrettably, verifying the convexity of $\Theta^\nu$ is intrinsically hard since the lower-level response $y(x)$ may not be written explicitly in general;
in some special cases, however, it is possible, e.g., see Hu and Fukushima \cite{Hu2013}, Sherali \cite{Sherali1984}, and Herty et al. \cite{Herty2022}.

These facts lead that the existence of Nash equilibrium is not guaranteed, and if it exists, finding it is NP-hard in general.
Hence, we concentrate on a weaker concept of Nash equilibrium as stated below.
The following concept is derived from a Bouligand stationarity for a mathematical program with equilibrium constraints (MPEC) \cite[Lemma 4.2.5]{Luo1996} and an equilibrium problem with equilibrium constraints (EPEC) \cite{Hu2012}; we extended the concept to \NEP.
\begin{definition}\label{def:stationary.Nash}
    A tuple of leaders' strategies $x^*\in X$ is called a \emph{Bouligand (B-) stationary Nash equilibrium} of {\NEP} if, for every $\nu\in\{1,\dots,N\}$, $x^{*,\nu}\in\Re^{n_\nu}$ satisfies
    \begin{align}\label{ieq:def.B-stationary}
    \begin{aligned}
        &(\Theta^\nu)'(x^{*,\nu},x^{*,-\nu};d^\nu)=\nabla_{x^\nu}\theta^\nu(x^{*,\nu},x^{*,-\nu},y(x^{*,\nu},x^{*,-\nu}))^\top d^\nu\\
        &+y'_{x^\nu}(x^{*,\nu},x^{*,-\nu};d^\nu)^\top\nabla_y\theta^\nu(x^{*,\nu},x^{*,-\nu},y(x^{*,\nu},x^{*,-\nu}))\ge0\quad\forall d^\nu\in{\cal T}_{X^\nu}(x^{*,\nu}),
    \end{aligned}
    \end{align}
    where $y'_{x^\nu}(x^\nu,x^{-\nu};d^\nu)\in\Re^m$ is a partial directional derivative of $y$ with respect to $x^\nu$ along the direction $d^\nu\in\Re^{n_\nu}$ in the sense of \eqref{eq:directional.derivative}.
\end{definition}

We also define a weaker concept of stationary Nash equilibrium, which is derived from a Clarke stationarity in nonsmooth analysis \cite{Bertsekas2003}.

\begin{definition}\label{def:weak.stationary.Nash}
    A tuple of leaders' strategies $x^*\in X$ is called a \emph{Clarke (C-) stationary Nash equilibrium} of {\NEP} if, for every $\nu\in\{1,\dots,N\}$, $x^{*,\nu}\in\Re^{n_\nu}$ satisfies:
    $$
    \begin{aligned}
        0\in\partial_{x^\nu}\Theta^\nu(x^{*,\nu},x^{*,-\nu})+{\cal T}_{X^\nu}(x^{*,\nu})^\circ.
    \end{aligned}
    $$
\end{definition}

\section{Smoothing methods and its convergence to stationary Nash equilibrium}\label{sec:smoothing}


Since the response function $y(x)$ is nonsmooth, it is difficult to obtain the B-/C-stationary Nash equilibrium of {\NEP} numerically.
To overcome this, we propose a smoothing method and show that as the smoothing parameter decreases, the sequence of stationary Nash equilibria to the smoothed NEP converges to the B-/C-stationary Nash equilibrium of {\NEP}.

\subsection{Smoothing method}

Hereinafter, the strategy set $Y^\omega(x)\subset\Re^{m_\omega}$ of follower $\omega$ is defined by
$$
    Y^\omega(x):=\{y^\omega\in\Re^{m_\omega}\mid g^\omega(x,y^\omega)\le0\},\quad\omega\in\{1,\dots,M\},
$$
where $g^\omega(x,\cdot)\colon\Re^{m_\omega}\to\Re^{l_\omega}$.
Let $l:=l_1+\dots+l_M$.
Note that we omit the equality constraints in the model since it is not essential in the analysis.

In the following, we also assume the conditions stated below.

\begin{assumption}\label{asmp:LF}
For all $\nu\in\{1,\dots,N\}$, the following conditions hold:
\begin{enumerate}[(L1)]
    \item $\theta^\nu$ is continuously differentiable;\label{asmp:leader1}
    \item The set $X^\nu\subset\Re^{n_\nu}$ is nonempty and compact.\label{asmp:leader2}
\end{enumerate}
In addition, for all $\omega\in\{1,\dots,M\}$, the following conditions hold:
\begin{enumerate}[(F1)]
    \item $\gamma^\omega$ and $g^\omega$ are sufficiently smooth, and $\gamma^\omega(x,\cdot,y^{-\omega})$ is convex for arbitrary given $x$ and $y^{-\omega}$;\label{asmp:follower1}
    \item $Y^\omega(x)$ is nonempty, convex and compact;\label{asmp:follower2}
    \item For any given $x\in X$ and any feasible solution $y^\omega\in Y^\omega(x)$, the linear independence constraint qualification (LICQ) for the inequality constraint $g^\omega(x,y^\omega)\le0$ holds.\label{asmp:follower3}
\end{enumerate} 
\end{assumption}


Let 
$$
    G(x,y):=\left[
    \begin{array}{c}
        \nabla_{y^1}\gamma^1(x,y^1,y^{-1}) \\
        \vdots \\
        \nabla_{y^M}\gamma^M(x,y^M,y^{-M})
    \end{array}
    \right],\quad
    Y(x):=Y^1(x)\times\dots\times Y^M(x).
$$
Under the convexity assumption on each follower's optimization problem \eqref{prob:follower.j-th}, the condition for the Nash equilibrium in the followers' Nash game is equivalently reformulated as the following VI by Proposition~\ref{prop:Nasheq.VI}:
\begin{align}\label{vi:follower}
    \langle G(x,y^*),y-y^*\rangle\ge0\quad\forall y\in Y(x),
\end{align}
where $y^*\in Y(x)$ denotes the Nash equilibrium.
In order to ensure the uniqueness of the Nash equilibrium in the followers' Nash game, i.e., the solution of VI~\eqref{vi:follower}, we further assume the following assumptions in this paper.
\begin{assumption}\label{asmp:followersVI}
    The Jacobian matrix of the mapping $G(x,\cdot)\colon\Re^m\to\Re^m$ is positive definite for any fixed $x$.
\end{assumption}

\begin{remark}\label{rem:assumption}
    Let us review the assumptions and problem settings used in the previous literature on MLMFG.
    Hu and Fukushima \cite{Hu2011} considered the multi-leader--single-follower quadratic game in which one follower solves the strictly convex quadratic optimization problem with linear equality constraints.
    In this setting, the optimality condition for the follower is necessary and sufficient, and hence the unique response can be analytically solved; in fact, the follower's response is linear.
    Herty et al. \cite{Herty2022} also considered the same quadratic game where the follower solves the strictly convex quadratic optimization with a positive diagonal matrix for the quadratic term but the constrains only consists of the componentwise lower bound, i.e., $y\ge l(x)$, where $l(x)$ is a linear function of $x$.
    In the setting, the follower's unique response is not smooth but can be solved analytically.
    They applied the smoothing method for follower's optimality conditions, and they then obtained the smoothed unique response even though the smoothing term is included.
    
    On the other hand, our settings can be seen as a generalization of theirs since we do not assume the detailed structure of $\gamma^\omega(x,\cdot,\cdot)$ or $Y(x)$.
    Note that the scope of both the literature above is to identify the existence or uniqueness of the LF Nash equilibrium of the MLMFG, but we do not consider the existence of LF Nash equilibrium, though the uniqueness of the Nash equilibrium of the followers' game is always ensured by Proposition~\ref{prop:existence.uniqueness.Follower.Nash}.
\end{remark}

\begin{proposition}\label{prop:existence.uniqueness.Follower.Nash}
    Suppose that \ref{asmp:follower1}--\ref{asmp:follower3} in Assumption~\ref{asmp:LF} and Assumption~\ref{asmp:followersVI} hold.
    Then the Nash equilibrium of followers' Nash game is unique.
\end{proposition}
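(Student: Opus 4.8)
The plan is to reduce the statement to the uniqueness of the solution of the followers' variational inequality~\eqref{vi:follower} and then invoke the abstract results of Section~\ref{sec:prelim}. Throughout, the leaders' strategy $x\in X$ is held fixed, so the followers' game is a genuine (non-generalized) $M$-player NEP in which player $\omega$ minimizes $\gamma^\omega(x,y^\omega,y^{-\omega})$ over $Y^\omega(x)$. By \ref{asmp:follower1} each $\gamma^\omega(x,\cdot,y^{-\omega})$ is convex and each $\gamma^\omega$ is smooth, and by \ref{asmp:follower2} each $Y^\omega(x)$ is nonempty, closed and convex; hence Proposition~\ref{prop:Nasheq.VI} applies with $G(x,\cdot)$ playing the role of the operator, and $y^*\in Y(x)$ is a followers' Nash equilibrium if and only if it solves VI~\eqref{vi:follower}. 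It therefore suffices to prove that VI~\eqref{vi:follower} admits exactly one solution.

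For this I would apply Proposition~\ref{prop:uniquenss}, viewing the followers as the $N$ players there. Its hypotheses are precisely those of Proposition~\ref{prop:existence.Nash.eq} — convexity of each player's cost in its own variable (\ref{asmp:follower1}) and nonemptiness, convexity and compactness of each strategy set (\ref{asmp:follower2}) — together with strict monotonicity of the operator on the joint set $Y(x)=Y^1(x)\times\dots\times Y^M(x)$. The only point requiring genuine work is to derive this strict monotonicity from Assumption~\ref{asmp:followersVI}. Since $Y(x)$ is convex, for $y,y'\in Y(x)$ with $y\neq y'$ the segment $y'+t(y-y')$, $t\in[0,1]$, remains in $Y(x)$, and because $G(x,\cdot)$ is $C^1$ (again by \ref{asmp:follower1}) the integral form of the mean value theorem gives
\begin{align*}
    \langle G(x,y)-G(x,y'),\,y-y'\rangle=\int_0^1 (y-y')^\top \nabla G(x,\,y'+t(y-y'))^\top (y-y')\,dt .
\end{align*}
Under the paper's transposed-Jacobian convention, $\nabla G(x,\cdot)^\top$ is the ordinary Jacobian of $y\mapsto G(x,y)$, which Assumption~\ref{asmp:followersVI} postulates to be positive definite everywhere; since positive definiteness is insensitive to transposition and $y-y'\neq0$, each integrand is strictly positive, hence so is the integral. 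This is exactly strict monotonicity of $G(x,\cdot)$ on $Y(x)$, so Proposition~\ref{prop:uniquenss} yields a unique solution of VI~\eqref{vi:follower}, i.e., a unique followers' Nash equilibrium.

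I do not expect a serious obstacle: the proposition is essentially a repackaging of Propositions~\ref{prop:Nasheq.VI}, \ref{prop:existence.Nash.eq} and~\ref{prop:uniquenss}. The one thing to be careful about is keeping the transposed-Jacobian notation consistent, so that the quadratic form appearing in the displayed identity is genuinely the one governed by Assumption~\ref{asmp:followersVI}, and noting that the segment along which the mean value theorem is invoked stays inside the convex set $Y(x)$. It is also worth remarking explicitly that \ref{asmp:follower3} (LICQ) plays no role in this particular statement — it will only be needed in the subsequent smoothing analysis — and that the truly generalized (GNEP) case is deliberately excluded here since $Y^\omega$ depends on $x$ alone.
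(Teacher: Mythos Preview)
Your proposal is correct and follows essentially the same route as the paper: reduce to the VI~\eqref{vi:follower} via Proposition~\ref{prop:Nasheq.VI}, infer strict monotonicity of $G(x,\cdot)$ from Assumption~\ref{asmp:followersVI}, and conclude uniqueness via Proposition~\ref{prop:uniquenss}. The only difference is that the paper simply asserts ``Assumption~\ref{asmp:followersVI} implies that $G(x,\cdot)$ is strictly monotone'' without justification, whereas you supply the integral mean-value argument explicitly; your remark that \ref{asmp:follower3} is unused here is also accurate.
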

\begin{proof}
    By the convexity of $\gamma^\omega(x,\cdot,y^{-\omega})$ and compactness of $Y^\omega(x)$ for all $\omega$, there exists a Nash equilibrium depending on $x\in X$ by Proposition~\ref{prop:existence.Nash.eq}.
    By the convexity of $Y^\omega(x)$ for all $\omega$, finding a Nash equilibrium is equivalent to solving VI \eqref{vi:follower}.
    Assumption~\ref{asmp:followersVI} implies that $G(x,\cdot)$ is strictly monotone for any fixed $x$.
    It then follows from Proposition~\ref{prop:uniquenss} that the solution to the strictly monotone variational inequality is at most one.
    Therefore the Nash equilibrium of the followers' Nash game uniquely exists.
\end{proof}

\begin{remark}
    If $Y(x)$ is not compact, the uniqueness still holds if the mapping $G(x,\cdot)$ is strongly monotone: There exists $\sigma>0$ such that
    $$
        \langle G(x,y)-G(x,y'),y-y'\rangle\ge\sigma\|y-y'\|^2\quad\forall y,y'\in Y(x).
    $$
\end{remark}

Omitting the follower's label $\omega$, we simplify the notations of the followers' constraint functions as follows:
$$
   g(x,y)=[g_i(x,y)]_{i=1}^l:=[g^\omega(x,y^\omega)]_{\omega=1}^M.
$$
Since $g^\omega(x,y^\omega)$ is independent of $y^{-\omega}$ and for any $y^\omega$ such that $g^\omega(x,y^\omega)\le0$, $y^\omega$ satisfies Assumption~\ref{asmp:LF}--\ref{asmp:follower3}, the LICQ for the collection of inequality constraints $g(x,y)\le0$ in VI \eqref{vi:follower} still holds.
Then the KKT conditions for the VI are written as follows:
\begin{align}\label{eq:KKT.followers}
\begin{aligned}
    G(x,y)+\nabla_y g(x,y)\lambda=0,\\
    g(x,y)+z=0,\\
    0\le\lambda\perp z\ge 0,
\end{aligned}
\end{align}
where $z\in\Re^{l}$ is a slack variable for the inequality constraint $g(x,y)\le0$, and $\lambda\in\Re^{l}$ represents the Lagrange multiplier.
If \eqref{eq:KKT.followers} is incorporated into the constraints of each leader's optimization problem \eqref{prob:leader.i-th}, the resultant problem is referred to as an EPEC.
Previous works such as \cite{Hori2019,Leyffer2010} have proposed solution methods for the EPEC associated with the MLMFG.

Now, using a Fischer--Burmeister function (FB-function) $\phi_0\colon\Re^2\to\Re$:
$$
    \phi_0(a,b):=\sqrt{a^2+b^2}-(a+b),
$$
the complementarity condition $0\le a\perp b\ge 0$ ($a\in\Re$, $b\in\Re$) is equivalent to $\phi_0(a,b)=0$.
Then, using this property, the complementarity $0\le\lambda\perp z\ge0$ is rewritten as
$$
    \Phi_0(\lambda,z):=\left[
    \begin{array}{c}
         \phi_0(\lambda_1,z_1) \\
         \vdots \\
         \phi_0(\lambda_l,z_l)
    \end{array}\right]=0.
$$
Let
$$
    H_0(x,y,z,\lambda):=
    \left[
        \begin{array}{c}
            G(x,y)+\nabla_y g(x,y)\lambda \\
            g(x,y)+z \\
            \Phi_0(\lambda,z)
        \end{array}
    \right].
$$
Then, KKT conditions \eqref{eq:KKT.followers} coincide with $H_0(x,y,z,\lambda)=0$.
Hereinafter, let $w:=(y,z,\lambda)$ and $H_0(x,w):=H_0(x,y,z,\lambda)$

\begin{proposition}\label{prop:VI.KKT}
    Given $x\in X$, let $w^*:=(y^*,z^*,\lambda^*)$ be the zero of the nonlinear equation $H_0(x,w)=0$.
    If \ref{asmp:follower1}--\ref{asmp:follower3} in Assumption~\ref{asmp:LF} and Assumption~\ref{asmp:followersVI} hold, then, $y^*$ is a Nash equilibrium of the followers' Nash game, and it is uniquely determined depending on $x\in X$.
\end{proposition}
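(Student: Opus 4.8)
The plan is to unpack the equation $H_0(x,w^*)=0$ block by block, recognize it as the KKT system \eqref{eq:KKT.followers} of the followers' variational inequality \eqref{vi:follower}, upgrade that system to the VI itself, and then invoke Proposition~\ref{prop:existence.uniqueness.Follower.Nash} for uniqueness. First I would write out the three components of $H_0(x,w^*)=0$: the stationarity relation $G(x,y^*)+\nabla_y g(x,y^*)\lambda^*=0$, the slack relation $g(x,y^*)+z^*=0$, and $\Phi_0(\lambda^*,z^*)=0$. Using the defining property of the Fischer--Burmeister function, $\phi_0(a,b)=0\iff a\ge0,\ b\ge0,\ ab=0$, the third relation is equivalent componentwise to $0\le\lambda^*\perp z^*\ge0$. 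Combined with $z^*=-g(x,y^*)$ this yields $g(x,y^*)\le0$ (so $y^*\in Y(x)$), $\lambda^*\ge0$, and $\lambda_i^*g_i(x,y^*)=0$ for all $i$; hence $(y^*,\lambda^*)$ satisfies exactly the KKT conditions \eqref{eq:KKT.followers} of the VI \eqref{vi:follower}.

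Next I would show that these KKT conditions imply $y^*$ solves \eqref{vi:follower}. By \ref{asmp:follower2} the set $Y(x)=Y^1(x)\times\cdots\times Y^M(x)$ is nonempty, closed and convex, so by Lemma~\ref{lem:regular.sufficient} the normal cone ${\cal N}_{Y(x)}(y^*)$ is $\{d\mid\langle d,y-y^*\rangle\le0\ \forall y\in Y(x)\}$. The stationarity relation rewrites as $-G(x,y^*)=\sum_i\lambda_i^*\nabla_y g_i(x,y^*)$, a nonnegative combination of the active constraint gradients (the inactive terms vanish by complementary slackness). Using the convexity of each $g_i(x,\cdot)$ that accompanies the convexity of $Y^\omega(x)$ in \ref{asmp:follower2}, at every active index $i$ and every $y\in Y(x)$ one has $\langle\nabla_y g_i(x,y^*),y-y^*\rangle\le g_i(x,y)-g_i(x,y^*)\le0$, whence $-G(x,y^*)\in{\cal N}_{Y(x)}(y^*)$, which is precisely $\langle G(x,y^*),y-y^*\rangle\ge0$ for all $y\in Y(x)$. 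Since by \ref{asmp:follower1} each $\gamma^\omega(x,\cdot,y^{-\omega})$ is convex, Proposition~\ref{prop:Nasheq.VI} turns this solution of \eqref{vi:follower} into a Nash equilibrium of the followers' game.

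Finally, Proposition~\ref{prop:existence.uniqueness.Follower.Nash}, which relies on \ref{asmp:follower1}--\ref{asmp:follower3} of Assumption~\ref{asmp:LF} and Assumption~\ref{asmp:followersVI}, guarantees that the followers' Nash equilibrium is unique for the given $x$; therefore $y^*$ must coincide with that unique equilibrium, so it is determined by $x$ alone, as claimed. I expect the main obstacle to be making the passage from the KKT system \eqref{eq:KKT.followers} back to the VI \eqref{vi:follower} fully rigorous: one must be careful that the constraint qualification \ref{asmp:follower3} together with the convexity in \ref{asmp:follower2} genuinely gives the identity ${\cal N}_{Y(x)}(y^*)=\{\nabla_y g(x,y^*)\lambda\mid\lambda\ge0,\ \lambda\perp g(x,y^*)\}$ (not merely an inclusion), and also that LICQ pins down $\lambda^*$, and hence all of $w^*=(y^*,z^*,\lambda^*)$, uniquely. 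The Fischer--Burmeister equivalence and the concluding appeal to uniqueness are routine.
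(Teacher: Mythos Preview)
Your proposal is correct and follows essentially the same route as the paper: identify $H_0(x,w^*)=0$ with the KKT conditions \eqref{eq:KKT.followers}, pass from those to the VI \eqref{vi:follower}, invoke Proposition~\ref{prop:Nasheq.VI} to obtain a followers' Nash equilibrium, and conclude uniqueness of $y^*$ via Proposition~\ref{prop:existence.uniqueness.Follower.Nash}. The only difference is that the paper dispatches the KKT-to-VI step by citing \cite[Proposition~1.3.4]{Facchinei2003} (using convexity of $Y(x)$), whereas you spell out the normal-cone argument directly; your additional remarks about LICQ pinning down $\lambda^*$ and hence all of $w^*$ go beyond what the proposition claims (only $y^*$ is asserted to be unique) but are harmless.
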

\begin{proof}
    Since $w^*=(y^*,z^*,\lambda^*)$ satisfies $H_0(x,w^*)=0$, the tuple also satisfies KKT conditions \eqref{eq:KKT.followers} for VI \eqref{vi:follower}.
    It follows from the convexity of $Y(x)$ and \cite[Proposition 1.3.4]{Facchinei2003} that $y^*$ solves \eqref{vi:follower}, which implies $y^*$ is a Nash equilibrium of the followers' game by Proposition~\ref{prop:Nasheq.VI}.
    The uniqueness is ensured from Proposition~\ref{prop:existence.uniqueness.Follower.Nash}.
\end{proof}

Given a tuple of leaders' strategies $x\in X$, we denote $w(x):=(y(x),z(x),\lambda(x))$ as a solution to $H_0(x,w)=0$.
Proposition~\ref{prop:VI.KKT} states that we can obtain a Nash equilibrium of followers' Nash game by solving $H_0(x,w)=0$.
Nevertheless, since $H_0$ is nonsmooth at which $z_i=\lambda_i=0$, \emph{degenerate point}, $y(x)$ is nonsmooth.
Consequently, reduced problem \eqref{prob:leader.i-th.reduced} is nonsmooth; it is numerically difficult to deal with.
To overcome it, we propose a smoothing approximation scheme for the equation.

Given a positive number $\varepsilon$, the \emph{smoothing} FB-function $\phi_\varepsilon\colon\Re^2\to\Re$ is defined as
$$
    \phi_\varepsilon(a,b):=\sqrt{a^2+b^2+2\varepsilon^2}-(a+b).
$$
It is easy to see that $\phi_\varepsilon$ is continuously differentiable everywhere, and $\phi_\varepsilon(a,b)\to\phi_0(a,b)$ ($\varepsilon\to 0$) by continuity.

Replacing $\phi_0$ with $\phi_\varepsilon$ in $H_0$, the perturbed nonlinear system is given by
$$
    H_\varepsilon(x,w)\equiv H_\varepsilon(x,y,z,\lambda) =0.
$$

Now we delve into some properties of $H_0$ and $H_\varepsilon$.

\begin{proposition}\label{prop:KKT.solution}
    Let $x\in X$ be fixed.
    For any $\varepsilon\ge0$, if \ref{asmp:follower1}--\ref{asmp:follower3} in Assumption~\ref{asmp:LF} and Assumption~\ref{asmp:followersVI} hold, then the system $H_\varepsilon(x,w)=0$ has a unique solution $w_\varepsilon(x):=(y_\varepsilon(x),z_\varepsilon(x),\lambda_\varepsilon(x))$, and $(z_\varepsilon(x),$ $\lambda_\varepsilon(x))$ satisfies $z_\varepsilon(x)>0$ and $\lambda_\varepsilon(x)>0$ with $[z_\varepsilon(x)]_i[\lambda_\varepsilon(x)]_i=\varepsilon^2$, where $[z_\varepsilon(x)]_i$ and $[\lambda_\varepsilon(x)]_i$ denote the $i$th element of the vectors $z_\varepsilon(x)$ and $\lambda_\varepsilon(x)$, respectively.
\end{proposition}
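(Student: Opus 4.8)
The plan is to collapse the square system $H_\varepsilon(x,w)=0$ to a single equation in $y$ on the interior of $Y(x)$, and then use the strict monotonicity supplied by Assumption~\ref{asmp:followersVI} together with a logarithmic-barrier coercivity. I would first dispose of the scalar identity $\phi_\varepsilon(a,b)=0$: it forces $a+b=\sqrt{a^2+b^2+2\varepsilon^2}\ge0$, and squaring gives $ab=\varepsilon^2$; for $\varepsilon>0$ this makes $a$ and $b$ of the same sign, so $a+b>0$ forces $a>0$ and $b>0$, while for $\varepsilon=0$ it is just $a\ge0$, $b\ge0$, $ab=0$. Applying this componentwise to $\Phi_\varepsilon(\lambda,z)=0$, every zero of $H_\varepsilon(x,\cdot)$ satisfies $[z]_i[\lambda]_i=\varepsilon^2$ for all $i$, with $z>0$, $\lambda>0$ when $\varepsilon>0$ (and the complementarity $0\le\lambda\perp z\ge0$ when $\varepsilon=0$); this is precisely the ``moreover'' part, so only existence and uniqueness remain. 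From the middle block $z=-g(x,y)$, so $z>0$ is equivalent to $y\in\mathrm{int}\,Y(x)$, a nonempty set because the LICQ in~\ref{asmp:follower3} yields Slater's condition (at any feasible point, linear independence of the active-constraint gradients produces a strictly feasible direction). For $\varepsilon>0$, substituting $[\lambda]_i=\varepsilon^2/[z]_i=-\varepsilon^2/g_i(x,y)$ into the first block reduces $H_\varepsilon(x,w)=0$ to the single central-path equation
\begin{equation}\label{eq:reduced.smoothed.KKT}
  \Psi_\varepsilon(y):=G(x,y)-\varepsilon^2\sum_{i=1}^{l}\frac{\nabla_y g_i(x,y)}{g_i(x,y)}=G(x,y)+\varepsilon^2\nabla_y B(x,y)=0,\qquad y\in\mathrm{int}\,Y(x),
\end{equation}
where $B(x,y):=-\sum_{i=1}^{l}\ln(-g_i(x,y))$; the case $\varepsilon=0$ reduces to the KKT system of $\mathrm{VI}(G(x,\cdot),Y(x))$.

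For uniqueness, when $\varepsilon>0$ the map $\nabla_y B(x,\cdot)$ is monotone (it is a gradient of a convex function, e.g. whenever each $g_i$ is convex, which is sufficient for~\ref{asmp:follower2}), and $G(x,\cdot)$ is strictly monotone by Assumption~\ref{asmp:followersVI}; hence $\Psi_\varepsilon$ is strictly monotone on $\mathrm{int}\,Y(x)$, so \eqref{eq:reduced.smoothed.KKT} has at most one solution, which in turn pins down $z$ and $\lambda$. Equivalently, one may insert two hypothetical zeros $(y,z,\lambda)$, $(y',z',\lambda')$ into the identity $G(x,y)-G(x,y')=-\sum_i(\lambda_i\nabla_y g_i(x,y)-\lambda_i'\nabla_y g_i(x,y'))$, bound the right-hand side below using gradient inequalities for the $g_i$ and the arithmetic--geometric inequality $\lambda_iz_i'+\lambda_i'z_i\ge2\varepsilon^2$, and contradict strict monotonicity of $G(x,\cdot)$ unless $y=y'$. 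When $\varepsilon=0$, $y$ is the unique followers' Nash equilibrium by Proposition~\ref{prop:existence.uniqueness.Follower.Nash}, $z=-g(x,y)$ is determined, and the multiplier $\lambda$ is unique because LICQ holds at $y$.

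For existence I would split the two cases. For $\varepsilon=0$: Proposition~\ref{prop:existence.uniqueness.Follower.Nash} gives a followers' Nash equilibrium $y$, hence a solution of $\mathrm{VI}(G(x,\cdot),Y(x))$ by Proposition~\ref{prop:Nasheq.VI}; since LICQ holds, there are KKT multipliers $\lambda\ge0$ with slacks $z=-g(x,y)\ge0$ and $\lambda\perp z$, i.e. $H_0(x,w)=0$. For $\varepsilon>0$: I would obtain an interior solution of \eqref{eq:reduced.smoothed.KKT} by a standard interior-point/central-path argument. Fix $y_0\in\mathrm{int}\,Y(x)$. A gradient inequality for the (convex) $g_i$ gives $\nabla_y g_i(x,y)^\top(y-y_0)\ge g_i(x,y)-g_i(x,y_0)$, whence $\langle\nabla_y B(x,y),y-y_0\rangle\ge\sum_i\bigl(g_i(x,y_0)/g_i(x,y)-1\bigr)\to+\infty$ as $y\to\partial Y(x)$, since some $-g_i(x,y)\to0$ while $g_i(x,y_0)<0$; as $\langle G(x,y),y-y_0\rangle$ is bounded on the compact set $Y(x)$, we get $\langle\Psi_\varepsilon(y),y-y_0\rangle\to+\infty$ at the boundary. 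Solving the strictly monotone $\mathrm{VI}(\Psi_\varepsilon,Y_\delta(x))$ on a slightly shrunk closed convex set $Y_\delta(x):=\{y:g(x,y)\le-\delta\}$ for small $\delta>0$ then yields a point that, by this boundary behavior, lies in $\mathrm{int}\,Y(x)$ and is therefore an interior zero of $\Psi_\varepsilon$; reassembling $z=-g(x,y)$ and $\lambda_i=\varepsilon^2/z_i$ gives $w_\varepsilon(x)$.

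The main obstacle is the $\varepsilon>0$ existence step: \eqref{eq:reduced.smoothed.KKT} is not the gradient equation of a coercive potential unless $G(x,\cdot)$ is itself a gradient map, so the existence of the central path must be argued via variational-inequality (degree-theoretic) arguments on a shrunk domain together with careful control of $\Psi_\varepsilon$ near $\partial Y(x)$; this is also where a structural hypothesis on the constraint functions (convexity of each $g_i$, which is sufficient for~\ref{asmp:follower2}) enters, or, alternatively, where one invokes a known existence result for the interior-point path of a strictly monotone VI over a compact convex set.
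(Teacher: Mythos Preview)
Your argument is sound, but it is a genuinely different route from the paper's. The paper's proof is a two-line citation: for $\varepsilon=0$ it points to Proposition~\ref{prop:VI.KKT}, and for $\varepsilon>0$ it simply invokes Kanzow and Jiang~\cite[Lemma~3.11]{Kanzow1998} for solvability and uniqueness of $H_\varepsilon(x,w)=0$, declaring the positivity and $z_i\lambda_i=\varepsilon^2$ relations ``easily verified''. Your scalar analysis of $\phi_\varepsilon(a,b)=0$ is exactly that easy verification, and your $\varepsilon=0$ case matches the paper's. Where you diverge is the $\varepsilon>0$ step: instead of citing a black-box lemma, you reconstruct a central-path argument---eliminating $z$ and $\lambda$ to obtain $G(x,y)+\varepsilon^2\nabla_y B(x,y)=0$ on $\mathrm{int}\,Y(x)$, and then exploiting strict monotonicity of $G(x,\cdot)$ plus monotonicity of the log-barrier gradient for uniqueness, and boundary blow-up of $\langle\Psi_\varepsilon(y),y-y_0\rangle$ for existence via a VI on a shrunk set.

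What your approach buys is transparency: one sees that the smoothed KKT system is precisely the interior-point central path for the followers' VI, and why Assumption~\ref{asmp:followersVI} is the right hypothesis. The price is that you need convexity of the individual $g_i$ (not just of $Y(x)$) for the barrier to be convex and for $Y_\delta(x)$ to be convex; you flag this correctly, and in fact the paper relies on the same implicit hypothesis later (e.g., in the proof of Lemma~\ref{lem:nonsingularity.positive} it writes ``$g(x,\cdot)$ is convex''). Your final fallback---``invoke a known existence result for the interior-point path of a strictly monotone VI over a compact convex set''---is precisely the Kanzow--Jiang citation the paper uses, so both proofs ultimately rest on the same pillar; yours just unpacks it.
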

\begin{proof}
    It suffices to show the claim when $\varepsilon>0$ since we have proved the statement in the case where $\varepsilon=0$ in Proposition~\ref{prop:VI.KKT}.
    The solvability and uniqueness of the solution to $H_\varepsilon(x,w)=0$ is proved by Kanzow and Jiang \cite[Lemma 3.11]{Kanzow1998}.
    The latter statement is easily verified.
\end{proof}

We next show the nonsingularity of the (generalized) Jacobian matrix of $H_\varepsilon$ for any $\varepsilon\ge0$.

\begin{lemma}\label{lem:nonsingularity}
    Let $L\in\Re^{m\times m}$ be a (not necessarily symmetric) positive definite matrix and $A\in\Re^{m\times l}$ be arbitrary.
    Suppose that $\Xi\in\Re^{l\times l}$ and $H\in\Re^{l\times l}$ are diagonal matrices with negative entries.
    Then, the matrix
    $$
        M:=
        \left[
        \begin{array}{ccc}
            L & A & O \\
            O & I & \Xi \\
            A^\top & O & H 
        \end{array}
        \right]\in\Re^{(m+2l)\times (m+2l)}
    $$
    is nonsingular.
\end{lemma}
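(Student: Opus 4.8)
The plan is to show $M$ is injective: suppose $M(u,v,s)^\top = 0$ with $u\in\Re^m$, $v\in\Re^l$, $s\in\Re^l$, and deduce $u=v=s=0$. Writing out the block rows gives the system
\begin{align}\label{eq:blocksystem}
\begin{aligned}
    Lu + Av &= 0,\\
    v + \Xi s &= 0,\\
    A^\top u + Hs &= 0.
\end{aligned}
\end{align}
From the second equation, $v = -\Xi s$. Substituting into the first, $Lu = -Av = A\Xi s$. The idea is to pair the first equation with $u$ and the third with $s$ (up to sign) so that the cross terms $A^\top u \cdot v$-type expressions cancel, leaving only definite quadratic forms.

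First I would take the inner product of the first equation with $u$: $u^\top L u = -u^\top A v$. Then take the inner product of the third equation with $s$: $s^\top A^\top u + s^\top H s = 0$, i.e. $u^\top A s = -s^\top H s$. Now I need to relate $u^\top A v$ to $u^\top A s$; since $v = -\Xi s$, we have $u^\top A v = -u^\top A \Xi s$. Here the mismatch is that $\Xi$ sits between $A$ and $s$, so the cancellation is not immediate — this is the one place requiring a little care. The resolution is that $\Xi$ is diagonal; writing $u^\top A$ componentwise and using $v_i = -\Xi_{ii} s_i$ together with the scalar relation from the second equation, one expresses $u^\top L u$ in terms of $\sum_i \Xi_{ii}(A^\top u)_i s_i$, and from the third equation $(A^\top u)_i = -H_{ii} s_i$, yielding
$$
    u^\top L u = -u^\top A v = u^\top A \Xi s = \sum_{i=1}^l \Xi_{ii}(A^\top u)_i s_i = -\sum_{i=1}^l \Xi_{ii} H_{ii} s_i^2.
$$
Since $\Xi$ and $H$ are diagonal with \emph{negative} entries, each product $\Xi_{ii}H_{ii} > 0$, so the right-hand side is $\le 0$, while $u^\top L u \ge 0$ because $L$ is positive definite (positive semidefiniteness of its symmetric part suffices here). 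Hence both sides vanish: $u^\top L u = 0$ forces $u = 0$ by positive definiteness of $L$, and $\sum_i \Xi_{ii}H_{ii}s_i^2 = 0$ with all coefficients strictly positive forces $s = 0$. Then $v = -\Xi s = 0$ from the second equation. Therefore the only solution is the trivial one, so $M$ is nonsingular.

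The main obstacle is purely bookkeeping: making the cancellation of the $A$-cross-terms precise when the diagonal matrix $\Xi$ is interposed. I expect this to reduce cleanly to the displayed identity by working componentwise, exploiting that $\Xi$ and $H$ are diagonal (so they commute with coordinate projections) and that the middle block row gives the simple relation $v_i = -\Xi_{ii}s_i$ coordinatewise. Once that identity is in hand, the sign argument — a positive (semi)definite form on the left equal to a non-positive form on the right — closes the proof immediately. No appeal to symmetry of $L$ is needed, which matches the statement's parenthetical ``not necessarily symmetric''; one only uses that $u^\top L u = 0 \Rightarrow u = 0$, which holds for any matrix whose symmetric part is positive definite.
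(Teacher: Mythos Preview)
Your proof is correct, but it proceeds differently from the paper's. The paper argues by elimination: using that $L$ and $\Xi$ are invertible, it writes $v_1=-L^{-1}Av_2$ and $v_3=-\Xi^{-1}v_2$, substitutes into the third block row, and obtains $(A^\top L^{-1}A + H\Xi^{-1})v_2=0$; since $A^\top L^{-1}A$ is positive semidefinite and $H\Xi^{-1}$ is diagonal with strictly positive entries, the coefficient matrix is positive definite, forcing $v_2=0$ and hence $v_1=v_3=0$. Your argument instead pairs the block equations via inner products to obtain the identity $u^\top L u = -\sum_i \Xi_{ii}H_{ii}s_i^2$, and concludes by a sign comparison. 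Both are short and standard; your route has the mild advantage that it never invokes the positive semidefiniteness of $A^\top L^{-1}A$ (which, for nonsymmetric $L$, tacitly uses that $L^{-1}$ is itself positive definite), working directly with $u^\top L u$ instead. The paper's Schur-complement style elimination, on the other hand, makes the reduction to a single $l\times l$ system more explicit, which can be convenient if one also wants structural information about $M^{-1}$.
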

\begin{proof}
    It suffices to show that the system of equation $Mv=0$ has only the trivial solution $v=0$.
    Let $v=(v_1,v_2,v_3)$, and then we have
    \begin{align}
        Lv_1+Av_2=0,\label{eq:lem:nonsingularity.1}\\
        v_2+\Xi v_3=0,\label{eq:lem:nonsingularity.2}\\
        A^\top v_1+H v_3=0.\label{eq:lem:nonsingularity.3}
    \end{align}
    It follows from \eqref{eq:lem:nonsingularity.1} that $v_1=-L^{-1}Av_2$.
    Since $\Xi$ is a negative diagonal matrix, $v_3=-\Xi^{-1}v_2$ in \eqref{eq:lem:nonsingularity.2}.
    Substituting them for \eqref{eq:lem:nonsingularity.3} yields
    $$
        (A^\top L^{-1}A+H\Xi^{-1})v_2=0.
    $$
    Since $A^\top L^{-1}A$ is positive semidefinite for any $A$, and $H\Xi^{-1}$ is a diagonal matrix whose diagonal entries are positive, the coefficient matrix $A^\top L^{-1}A+H\Xi^{-1}$ is positive definite.
    This implies $v_2=0$, and then $v_3=v_1=0$.
    We have completed the proof.
\end{proof}

\begin{lemma}\label{lem:nonsingularity.positive}
    Suppose that \ref{asmp:follower1}--\ref{asmp:follower3} in Assumption~\ref{asmp:LF} and Assumption~\ref{asmp:followersVI} hold.
    Let $x\in X$ and for $\varepsilon>0$, $w^*=(y^*,z^*,\lambda^*)$ be a solution to $H_\varepsilon(x,w)=0$.
    Then, the Jacobian matrix $\nabla_w H_\varepsilon$ is nonsingular.
\end{lemma}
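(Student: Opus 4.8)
Proof proposal for Lemma~\ref{lem:nonsingularity.positive}.

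The plan is to compute the Jacobian $\nabla_w H_\varepsilon(x,w^*)$ in block form and to observe that, after a permutation of the blocks of $w$ and a left-multiplication by an invertible block-diagonal matrix (neither of which affects singularity), it has the block structure to which Lemma~\ref{lem:nonsingularity} applies. Writing $w=(y,z,\lambda)$ and differentiating the three rows of $H_\varepsilon=\bigl(G(x,y)+\nabla_y g(x,y)\lambda,\ g(x,y)+z,\ \Phi_\varepsilon(\lambda,z)\bigr)$, a direct computation produces a matrix whose diagonal blocks associated with $y$ and $z$ are $L:=\nabla_y G(x,y^*)+\sum_{i=1}^{l}\lambda_i^*\nabla_{yy}^2 g_i(x,y^*)$ and the identity (the latter coming from the $z$ appearing linearly in $g(x,y)+z$), whose $(y,\lambda)$- and $(\lambda,y)$-blocks are $A:=\nabla_y g(x,y^*)$ and $A^\top$, and whose two remaining nonzero blocks $\partial_z\Phi_\varepsilon$ and $\partial_\lambda\Phi_\varepsilon$ are the diagonal matrices $D_z$ and $D_\lambda$ with $i$th entries $\partial_b\phi_\varepsilon([\lambda^*]_i,[z^*]_i)$ and $\partial_a\phi_\varepsilon([\lambda^*]_i,[z^*]_i)$, respectively; all other blocks vanish.

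Next I would verify the hypotheses of Lemma~\ref{lem:nonsingularity}. The off-diagonal block $A$ is arbitrary, so nothing is required of it. For $D_z$ and $D_\lambda$: since $\varepsilon>0$ one has $\sqrt{a^2+b^2+2\varepsilon^2}>\max\{|a|,|b|\}$, hence $\partial_a\phi_\varepsilon(a,b)=a/\sqrt{a^2+b^2+2\varepsilon^2}-1<0$ and likewise $\partial_b\phi_\varepsilon(a,b)<0$, so $D_z$ and $D_\lambda$ are diagonal with strictly negative entries. It remains to check that $L$ is positive definite: Assumption~\ref{asmp:followersVI} makes $\nabla_y G(x,\cdot)$ positive definite, while Proposition~\ref{prop:KKT.solution} gives $\lambda^*>0$ (here we use $\varepsilon>0$), so that, using the convexity of the lower-level feasible sets imposed in Assumption~\ref{asmp:LF} in the form that each $g^\omega(x,\cdot)$ is convex (whence $\nabla_{yy}^2 g_i(x,y^*)$ is positive semidefinite), the correction term $\sum_i\lambda_i^*\nabla_{yy}^2 g_i(x,y^*)$ is positive semidefinite; a positive definite matrix plus a positive semidefinite one is positive definite, so $L$ is positive definite. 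With these facts in hand, reordering the blocks of $w$ to $(y,\lambda,z)$ and scaling the $\Phi_\varepsilon$ block-row by $D_\lambda^{-1}$ puts $\nabla_w H_\varepsilon$ into the block form treated in Lemma~\ref{lem:nonsingularity}, which then yields nonsingularity. Equivalently, and perhaps cleanest to write out, one repeats the elimination of the proof of Lemma~\ref{lem:nonsingularity} directly: from $\nabla_w H_\varepsilon\,v=0$ with $v=(v_y,v_z,v_\lambda)$, the $g+z$ row gives $v_z=-A^\top v_y$ and the $\Phi_\varepsilon$ row gives $v_z=-D_z^{-1}D_\lambda v_\lambda$, hence $v_\lambda=D_\lambda^{-1}D_z A^\top v_y$; substituting into the first row leaves $\bigl(L+A(D_\lambda^{-1}D_z)A^\top\bigr)v_y=0$, and since $D_\lambda^{-1}D_z$ is a positive diagonal matrix the second summand is positive semidefinite, so the coefficient matrix is positive definite, forcing $v_y=0$ and then $v_\lambda=v_z=0$.

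The step I expect to need the most care is establishing the positive definiteness of $L$. Positive definiteness of the game Jacobian $\nabla_y G(x,\cdot)$ alone does not suffice; one must control the multiplier-weighted Hessian term $\sum_i\lambda_i^*\nabla_{yy}^2 g_i(x,y^*)$, and it is precisely here that the positivity $\lambda^*>0$ — available because $\varepsilon>0$, via Proposition~\ref{prop:KKT.solution} — together with the convexity of each $g^\omega(x,\cdot)$ is used; without convexity of the lower-level constraints this term may be indefinite and the argument breaks down. The remaining ingredients — the block computation, the sign of the smoothed Fischer--Burmeister partial derivatives, and the permutation/scaling reduction to Lemma~\ref{lem:nonsingularity} — are routine.
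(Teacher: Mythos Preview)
Your proposal is correct and follows essentially the same approach as the paper: compute the block Jacobian, verify that $L$ is positive definite (via Assumption~\ref{asmp:followersVI} plus convexity of $g(x,\cdot)$ and $\lambda^*>0$) and that the smoothed Fischer--Burmeister diagonal blocks have strictly negative entries, then invoke Lemma~\ref{lem:nonsingularity}. The reordering/scaling you describe is actually unnecessary---in the paper's transposed-Jacobian convention the block matrix matches Lemma~\ref{lem:nonsingularity} directly---but your alternative direct elimination is in any case precisely the computation carried out in that lemma's proof.
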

\begin{proof}
    The Jacobian of $H_\varepsilon$ with respect to $w=(y,z,\lambda)$ is given as follows:
    $$
        \left[
        \begin{array}{ccc}
            L & A & O \\
            O & I & \Xi \\
            A^\top & O & H
        \end{array}
        \right],
    $$
    where
    \begin{align}
            L&:=\nabla_y G(x,y)+\sum_{i=1}^{l}\nabla^2_{yy}g_i(x,y)\lambda_i,\label{eq:prop:nonsingularity.Jacobian.1}\\
            A&:=\nabla_y g(x,y),\label{eq:prop:nonsingularity.Jacobian.2}\\
            \Xi&:=\underset{i=1,\dots,l}{\diag}\left(\frac{z_i}{\sqrt{(z_i)^2+(\lambda_i)^2+2\varepsilon^2}}-1\right)=\underset{i=1,\dots,l}{\diag}\left(\frac{z_i}{z_i+\lambda_i}-1\right),\nonumber\\
            H&:=\underset{i=1,\dots,l}{\diag}\left(\frac{\lambda_i}{\sqrt{(z_i)^2+(\lambda_i)^2+2\varepsilon^2}}-1\right)=\underset{i=1,\dots,l}{\diag}\left(\frac{\lambda_i}{z_i+\lambda_i}-1\right)\nonumber.
    \end{align}
    Here we use $z_i>0$, $\lambda_i>0$, and $z_i\lambda_i=\varepsilon^2$.
    It is obvious that $\Xi$ and $H$ consist of negative diagonal entries.
    Since $\nabla_y G(x,\cdot)$ is positive definite and $g(x,\cdot)$ is convex, $L$ is positive definite.
    Hence, applying Lemma~\ref{lem:nonsingularity} yields the result.
\end{proof}

For a given $x\in\Re^n$, we define the index sets below:
$$
    \begin{aligned}
        {\cal J}_{0+}(x)&:=\{i\mid z_i(x)=0<\lambda_i(x)\},\\
        {\cal J}_{00}(x)&:=\{i\mid z_i(x)=0=\lambda_i(x)\},\\
        {\cal J}_{+0}(x)&:=\{i\mid z_i(x)>0=\lambda_i(x)\},
    \end{aligned}
$$
where $z_i(x)$ and $\lambda_i(x)$ denote the $i$th element of $z(x)$ and $\lambda(x)$.

The following proposition is a slight modification of Theorem 3.5 and Lemma 3.12 in Kanzow and Jiang \cite{Kanzow1998}.
\begin{lemma}\label{lem:nonsingularity.0}
    Suppose that \ref{asmp:follower1}--\ref{asmp:follower3} in Assumption~\ref{asmp:LF} and Assumption~\ref{asmp:followersVI} hold.
    For a given $x\in X$, let $w^*$ be a solution to $H_0(x,w)=0$.
    Assume that the LICQ holds at $w^*$.
    Then, the generalized Jacobian matrix $\partial_{w}H_0(x,w^*)$ is nonsingular.
\end{lemma}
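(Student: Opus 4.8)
The plan is to show that every matrix $V\in\partial_w H_0(x,w^*)$ is nonsingular, which is the content of the claim. The key point is that the only nonsmooth ingredient of $H_0$ is the Fischer--Burmeister block $\Phi_0$, and its $i$-th entry $\phi_0(\lambda_i,z_i)$ is locally Lipschitz and depends only on the complementarity pair $(\lambda_i,z_i)$; hence $\partial_w H_0(x,w^*)$ is contained in the Cartesian product of the componentwise generalized gradients, and every $V$ in it has exactly the block structure of the Jacobian displayed in the proof of Lemma~\ref{lem:nonsingularity.positive}, except that the two diagonal blocks are now $\Xi=\diag(\xi_i)$ and $H=\diag(\eta_i)$ with $(\xi_i,\eta_i)\in\partial\phi_0(\lambda_i^*,z_i^*)$, the remaining (smooth) blocks being unchanged; in particular $L:=\nabla_yG(x,y^*)+\sum_{i=1}^l\lambda_i^*\nabla^2_{yy}g_i(x,y^*)$ is still positive definite, by Assumption~\ref{asmp:followersVI}, convexity of $g$, and $\lambda^*\ge0$, and $A:=\nabla_y g(x,y^*)$. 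So it suffices to establish nonsingularity for every matrix of this form. First I would record two elementary facts about the diagonal blocks. (a) For every $i$: $\xi_i\le0$, $\eta_i\le0$, and $(\xi_i,\eta_i)\ne(0,0)$; indeed, for $i\notin{\cal J}_{00}(x)$ a direct computation of $\nabla\phi_0(\lambda_i^*,z_i^*)$ gives one of the pair equal to $0$ and the other equal to $-1$, while for $i\in{\cal J}_{00}(x)$ this follows from $\partial\phi_0(0,0)=\{(s-1,t-1):s^2+t^2\le1\}$. In particular $\xi_i\eta_i\ge0$ for all $i$. (b) The diagonal block coming from differentiation of $\Phi_0$ in the slack direction vanishes at every inactive index (where $z_i^*>0$, so $g_i(x,y^*)<0$ and $\lambda_i^*=0$); hence this block is supported on the active index set $\{i:g_i(x,y^*)=0\}={\cal J}_{0+}(x)\cup{\cal J}_{00}(x)$.

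With these in hand I would solve $Vv=0$ following the block elimination in the proof of Lemma~\ref{lem:nonsingularity}, with the single modification that I keep the last block of unknowns, $v_3$, as the free variable (the block that is eliminated there using invertibility of a diagonal matrix may no longer be invertible). Eliminating the other two blocks through the two smooth block-rows leaves a reduced system that can be written $A^\top L^{-1}A\,u+H\,v_3=0$ with $u:=\Xi\,v_3$. Taking the inner product with $u$ gives $u^\top(A^\top L^{-1}A)\,u+\sum_i\xi_i\eta_i\,(v_3)_i^2=0$; the first term is $\ge0$ because $L$, hence $L^{-1}$, is positive definite, and the second term is $\ge0$ by (a); so both vanish, and in particular $Au=0$. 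By (b) the vector $u$ is supported on the active indices, so $Au=0$ is a linear dependence among the active constraint gradients $\{\nabla_y g_i(x,y^*):g_i(x,y^*)=0\}$, contradicting the LICQ assumed at $w^*$ unless $u=0$. Once $u=0$, the reduced system gives $H\,v_3=0$ as well, so $\xi_i(v_3)_i=0=\eta_i(v_3)_i$ for every $i$; since $(\xi_i,\eta_i)\ne(0,0)$ this forces $(v_3)_i=0$, hence $v_3=0$, and then the remaining two blocks of $v$ vanish by the two smooth block-rows. Thus $v=0$ and $V$ is nonsingular.

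I expect the main obstacle to be the treatment of the degenerate set ${\cal J}_{00}(x)$ (and, a little more mildly, of the non-degenerate active and inactive indices): there the diagonal blocks need not be strictly negative, so Lemma~\ref{lem:nonsingularity} does not apply as a black box. The fix is to exploit only what genuinely survives --- the sign condition $\xi_i\eta_i\ge0$ together with the support property (b) --- and to route nonsingularity through the LICQ on the \emph{active} gradients rather than through strict negativity of a diagonal block; this is precisely where the LICQ hypothesis of the lemma is used. For the fact that $\partial_w H_0(x,w^*)$ consists exactly of the block matrices described above --- with no coupling across distinct complementarity pairs and $(\xi_i,\eta_i)$ ranging independently over $\partial\phi_0(\lambda_i^*,z_i^*)$ --- I would rely on Theorem~3.5 and Lemma~3.12 of Kanzow and Jiang \cite{Kanzow1998}, the present statement being their result with the modification just described.
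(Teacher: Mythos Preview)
Your argument is correct and uses the same ingredients as the paper --- the block structure of $\partial_w H_0(x,w^*)$ (via Kanzow--Jiang), the sign properties of the diagonal pair $(\xi_i,\eta_i)\in\partial\phi_0(\lambda_i^*,z_i^*)$, positive definiteness of $L$, and the LICQ on the active gradients. The organization differs slightly: the paper works with $M^\top v=0$ and first shows $v_2^\top v_3\le 0$ by case analysis on ${\cal J}_{0+},{\cal J}_{00},{\cal J}_{+0}$, then uses positive definiteness of $L^\top$ to force $v_1=0$ directly, and only afterward appeals to LICQ to kill $v_3$; you instead work with $Mv=0$, perform the Schur-type elimination of $v_1,v_2$ exactly as in Lemma~\ref{lem:nonsingularity}, and then pair the inner product with $u=\Xi v_3$ to obtain $Au=0$ and $Hv_3=0$, finishing with the observation $(\xi_i,\eta_i)\neq(0,0)$. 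Both routes are short; yours has the mild advantage of staying parallel to the proof of Lemma~\ref{lem:nonsingularity} (so the reader sees precisely where strict negativity of the diagonals is replaced by $\xi_i\eta_i\ge0$ plus LICQ), at the small cost of needing that $L^{-1}$ inherits positive definiteness from $L$, which indeed holds even in the nonsymmetric case.
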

\begin{proof}
    The generalized Jacobian matrix of $H_0(x,w^*)$ with respect to $w$ is given by
    $$
    \begin{aligned}
        \partial_{w}H_0(x,w^*)={\Biggl\{}
        M=\left[
            \begin{array}{ccc}
            L & A & O \\
            O & I & \Xi \\
            A^\top & O & H
            \end{array}
        \right]\ {\Bigg|}\ 
        L=\eqref{eq:prop:nonsingularity.Jacobian.1}, A=\eqref{eq:prop:nonsingularity.Jacobian.2},\\
            \Xi=\underset{i=1,\dots,l}{\diag}(\xi_i-1),\ H=\underset{i=1,\dots,l}{\diag}(\eta_i-1).
        {\Biggr\}},
    \end{aligned}
    $$
    where
    $$
        \xi_i\in\left\{
        \begin{array}{ll}
        \{0\} & \text{if } i\in{\cal J}_{0+}(x) \\
        \left[0,1\right] & \text{if } i\in{\cal J}_{00}(x)\\
        \{1\} & \text{if } i\in{\cal J}_{+0}(x)
        \end{array}
        \right.,\quad
        \eta_i\in\left\{
        \begin{array}{ll}
        \{1\} & \text{if }i\in{\cal J}_{0+}(x) \\
        \left[0,1\right] & \text{if }i\in{\cal J}_{00}(x)\\
        \{0\} & \text{if }i\in{\cal J}_{+0}(x) 
        \end{array}
        \right.
    $$
    such that $\xi_i^2+\eta_i^2\le 1$ for all $i\in{\cal J}_{00}(x)$.
    
    It suffices to show the nonsingularity of $M$ for any $M\in\partial_{w}H_0(x,w^*)$.
    We show the the nonsingularity of $M^\top$ for convenience.
    Let $v=(v_1,v_2,v_3)$, and $M^\top v=0$ is given as follows:
    \begin{align}
        L^\top v_1+Av_3=0,\label{eq:prop:nonsingularity.genJac.1}\\
        A^\top v_1+v_2=0,\label{eq:prop:nonsingularity.genJac.2}\\
        \Xi v_2+Hv_3=0.\label{eq:prop:nonsingularity.genJac.3}
    \end{align}
    For $i\in{\cal J}_{0+}(x)$, $\Xi_i=-1$ and $H_{i}=0$.
    Then $[v_2]_i=0$ from \eqref{eq:prop:nonsingularity.genJac.3}.
    For $i\in{\cal J}_{00}(x)$, since either $\Xi_{i}$ or $H_{i}$ is negative, $[v_2]_i[v_3]_i\le0$ by \eqref{eq:prop:nonsingularity.genJac.3}.
    For $i\in{\cal J}_{+0}(x)$, $\Xi_i=0$ and $H_i=-1$.
    Then $[v_3]_i=0$ from \eqref{eq:prop:nonsingularity.genJac.3}.
    Summarizing these results yields $(v_2)^\top v_3\le0$.
    Then premultiplying \eqref{eq:prop:nonsingularity.genJac.2} with $v_3$ leads to
    $$
        \begin{aligned}
        &(v_3)^\top A^\top v_1+(v_3)^\top v_2=0\\
        \iff& (v_3)^\top A^\top v_1\ge0.
        \end{aligned}
    $$
    Furthermore, premultiplying \eqref{eq:prop:nonsingularity.genJac.1} with $v_1$ follows
    $$
        (v_1)^\top L^\top v_1+(v_1)^\top Av_3=0.
    $$
    Since $L^\top$ is positive definite and $(v_1)^\top Av_3\ge0$, $v_1=0$, which implies $v_2=0$ by \eqref{eq:prop:nonsingularity.genJac.2}.
    In \eqref{eq:prop:nonsingularity.genJac.1}, we have
    \begin{align}\label{eq:prop:nonsingularity.genJac.linearcomb}
        Av_3=0\iff\sum_{i=1}^l\nabla_y g_i(x,y)v_3=\sum_{i\in{\cal I}_g(x)}\nabla_y g_i(x,y)v_3=0,
    \end{align}
    where ${\cal I}_g(x):=\{i\mid g_i(x,y)=0,\ i=1,\dots,l\}$, and the last equality holds from $[v_3]_i=0$ for $i\in{\cal J}_{+0}(x)$.
    By the LICQ assumption and \eqref{eq:prop:nonsingularity.genJac.linearcomb}, $v_3=0$.
    Hence, we have $v=0$, and this implies that $M$ is nonsingular.
    This completes the proof.
\end{proof}

Summarizing the results of Lemmas~\ref{lem:nonsingularity.positive} and~\ref{lem:nonsingularity.0} yields the following proposition.

\begin{proposition}\label{prop:nonsingularity}
    Suppose that \ref{asmp:follower1}--\ref{asmp:follower3} in Assumption~\ref{asmp:LF} and Assumption~\ref{asmp:followersVI} hold.
    For every $\varepsilon\ge0$ and $x\in X$, the (generalized) Jacobian of $H_\varepsilon(x,\cdot)\colon\Re^{m+l+l}\to\Re^{m+l+l}$ is nonsingular. 
\end{proposition}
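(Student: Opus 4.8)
The plan is a case split on the sign of $\varepsilon$, with the (generalized) Jacobian in the statement understood to be evaluated at the unique solution $w_\varepsilon(x)=(y_\varepsilon(x),z_\varepsilon(x),\lambda_\varepsilon(x))$ of $H_\varepsilon(x,w)=0$, exactly as in Lemmas~\ref{lem:nonsingularity.positive} and~\ref{lem:nonsingularity.0}; the existence and uniqueness of this solution are furnished by Proposition~\ref{prop:KKT.solution}.

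For $\varepsilon>0$ the argument is immediate. The smoothing FB-function $\phi_\varepsilon$ is continuously differentiable on $\Re^2$ because its radicand is bounded below by $2\varepsilon^2>0$, and $G$ and $g$ are smooth by condition~\ref{asmp:follower1} in Assumption~\ref{asmp:LF}; hence $H_\varepsilon(x,\cdot)$ is $C^1$ and its generalized Jacobian at any point reduces to the singleton consisting of the ordinary Jacobian $\nabla_w H_\varepsilon(x,\cdot)$. Proposition~\ref{prop:KKT.solution} gives $z_\varepsilon(x)>0$ and $\lambda_\varepsilon(x)>0$ with $[z_\varepsilon(x)]_i[\lambda_\varepsilon(x)]_i=\varepsilon^2$, so Lemma~\ref{lem:nonsingularity.positive} applies directly at $w_\varepsilon(x)$ and yields nonsingularity.

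For $\varepsilon=0$ the only thing left to check is the hypothesis of Lemma~\ref{lem:nonsingularity.0}, namely that the LICQ holds at the solution $w_0(x)=(y(x),z(x),\lambda(x))$ of $H_0(x,w)=0$. Inspecting the proof of that lemma, the LICQ is invoked only to pass from $\nabla_y g(x,y(x))v_3=0$, already knowing $[v_3]_i=0$ for $i$ off the active set, to $v_3=0$; thus what is actually required is the linear independence of $\{\nabla_y g_i(x,y(x))\mid i\in{\cal I}_g(x)\}$, that is, the LICQ for the aggregated inequality system $g(x,y)\le0$ at the feasible point $y(x)\in Y(x)$. This is already in hand: condition~\ref{asmp:follower3} in Assumption~\ref{asmp:LF} gives the LICQ for each block $g^\omega(x,y^\omega)\le0$ at the corresponding component $y^\omega(x)$, and because $g^\omega$ depends only on $y^\omega$, the active rows of $\nabla_y g(x,y(x))$ assemble into a block-diagonal matrix with full-rank blocks and therefore have full row rank — this is exactly the observation recorded in the paragraph preceding \eqref{eq:KKT.followers}. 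Lemma~\ref{lem:nonsingularity.0} then yields nonsingularity of $\partial_w H_0(x,w_0(x))$, and the proof is complete.

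The main — and essentially only — obstacle is the identification carried out in the $\varepsilon=0$ case: one must make sure the abstract LICQ hypothesis at $w^*$ appearing in Lemma~\ref{lem:nonsingularity.0} is precisely the concrete, block-separable constraint qualification of condition~\ref{asmp:follower3}, and does not secretly require anything about the slack and multiplier components $(z,\lambda)$. Once this is pinned down, Proposition~\ref{prop:nonsingularity} is merely the concatenation of Lemmas~\ref{lem:nonsingularity.positive} and~\ref{lem:nonsingularity.0}.
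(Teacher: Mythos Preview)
Your proposal is correct and follows exactly the paper's own route: the proposition is stated as the immediate summary of Lemmas~\ref{lem:nonsingularity.positive} and~\ref{lem:nonsingularity.0}, with the LICQ hypothesis of the latter supplied by condition~\ref{asmp:follower3} via the block-diagonal observation already recorded before \eqref{eq:KKT.followers}. Your additional remark pinning down that the LICQ in Lemma~\ref{lem:nonsingularity.0} is precisely the aggregated constraint qualification on $g(x,y)\le0$ at $y(x)$ is a helpful clarification but not a deviation from the paper's argument.
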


In what follows, we also use the notation $H(\varepsilon,x,w):=H_\varepsilon(x,w)$ to emphasize that $\varepsilon$ is one of the variables.
In the same way, $w(\varepsilon,x)\equiv w_\varepsilon(x)$ and
$
w(\varepsilon,x)=(y(\varepsilon,x), z(\varepsilon,x), \lambda(\varepsilon,x))\equiv (y_\varepsilon(x),z_\varepsilon(x),\lambda_\varepsilon(x)).$

\begin{lemma}\label{lem:Lipschitz.regular}
    Suppose that \ref{asmp:follower1}--\ref{asmp:follower3} in Assumption~\ref{asmp:LF} and Assumption~\ref{asmp:followersVI} hold.
    For every $\varepsilon\ge0$, $H(\varepsilon,x,w)$, as a function of the variables $(\varepsilon,x,w)$, is locally Lipschitz continuous and regular.
\end{lemma}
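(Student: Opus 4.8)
The plan is to exploit the block structure of $H_\varepsilon$ and reduce everything to the single scalar building block $\phi_\varepsilon$. Writing $w=(y,z,\lambda)$, the map $H(\varepsilon,x,w)$ stacks three blocks: the stationarity block $G(x,y)+\nabla_y g(x,y)\lambda$, the feasibility block $g(x,y)+z$, and the complementarity block whose $i$th entry is $\phi_\varepsilon(\lambda_i,z_i)=\sqrt{\lambda_i^2+z_i^2+2\varepsilon^2}-\lambda_i-z_i$. First I would note that, by the smoothness assumption \ref{asmp:follower1} in Assumption~\ref{asmp:LF}, the functions $G$ and $\nabla_y g$ are as smooth as needed, so the first two blocks are continuously differentiable in $(\varepsilon,x,w)$ --- in fact independent of $\varepsilon$. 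Since every continuously differentiable function is locally Lipschitz continuous and regular (standard facts from Clarke's calculus \cite{Clarke1990}), the only work left is in the complementarity block.

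Next I would fix an index $i$ and analyze $\phi_\varepsilon(\lambda_i,z_i)$ viewed as a function of the full variable $(\varepsilon,x,w)$; it depends only on $(\varepsilon,\lambda_i,z_i)$, and passing from $(\varepsilon,\lambda_i,z_i)$ to the full list of variables amounts to composition with a linear coordinate projection, which preserves both local Lipschitz continuity and regularity. So it suffices to show that $(\varepsilon,\lambda_i,z_i)\mapsto\sqrt{\lambda_i^2+z_i^2+2\varepsilon^2}-\lambda_i-z_i$ is locally Lipschitz continuous and regular on $\Re^3$. The affine part $-\lambda_i-z_i$ is smooth, hence locally Lipschitz continuous and regular. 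The remaining term is $\|(\lambda_i,z_i,\sqrt 2\,\varepsilon)\|$, i.e.\ the Euclidean norm on $\Re^3$ composed with a linear map; the norm is convex and globally Lipschitz with constant $1$, hence locally Lipschitz continuous and regular, and composing with a linear map keeps it convex, therefore still locally Lipschitz continuous and regular. Adding the two terms via the sum rule for regular functions (which also preserves local Lipschitz continuity) gives the claim for $\phi_\varepsilon(\lambda_i,z_i)$.

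Finally I would assemble the pieces: every component of $H(\varepsilon,x,w)$ is either $C^1$ or of the form $\phi_\varepsilon(\lambda_i,z_i)$, hence locally Lipschitz continuous and regular. A vector-valued map is locally Lipschitz continuous exactly when each component is, and by Definition~\ref{def:regular} it is regular exactly when each component is regular; hence $H(\varepsilon,x,w)$ is locally Lipschitz continuous and regular for every $\varepsilon\ge0$ (indeed for all $\varepsilon\in\Re$, since the square-root term extends convexly across $\varepsilon=0$). I do not expect a genuine obstacle here, as this is essentially a bookkeeping lemma; the only delicate point is the behavior of $\phi_\varepsilon$ at the degenerate triple $\lambda_i=z_i=\varepsilon=0$ --- away from it $\phi_\varepsilon$ is in fact $C^\infty$ --- and that point is handled cleanly by recognizing the square-root term as a convex function, for which regularity is automatic.
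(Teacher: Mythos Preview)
Your proposal is correct and follows essentially the same line as the paper's proof: both observe that the first two blocks of $H$ are $C^1$ by \ref{asmp:follower1} and reduce the question to the Fischer--Burmeister block, where local Lipschitz continuity and regularity come from convexity via \cite[Proposition~2.3.6]{Clarke1990}. Your treatment is in fact slightly more careful, since you handle the joint dependence on $\varepsilon$ explicitly by recognizing $\sqrt{\lambda_i^2+z_i^2+2\varepsilon^2}$ as a (composed) Euclidean norm in three variables, whereas the paper argues only for $\phi_0$ and implicitly relies on smoothness when $\varepsilon>0$.
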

\begin{proof}
    Since~\ref{asmp:follower1} holds, and thus all the remaining components of $H(\varepsilon,x,w)$ except the FB-function $\phi_\varepsilon$ when $\varepsilon=0$ are continuously differentiable from \ref{asmp:follower1}, we only need to show that the locally Lipschitz continuity and regularity of $\phi_0$.
    It is obvious that $\phi_0$ is convex by its definition.
    It follows from \cite[Proposition 2.3.6-(b)]{Clarke1990} that $\phi_0$ is regular, and also $\phi_0$ is locally Lipschitz continuous whenever $\lambda_i$ and $z_i$ are bounded, where the boundedness of $\lambda_i$ and $z_i$ is satisfied from Assumptions~\ref{asmp:follower2} and~\ref{asmp:follower3}.
\end{proof}

\begin{proposition}\label{prop:implicit.y.z.lmd}
    Let $(\varepsilon,x,w)$ be such that $H(\varepsilon,x,w)=0$.
    If \ref{asmp:follower1}--\ref{asmp:follower3} in Assumption~\ref{asmp:LF} and Assumption~\ref{asmp:followersVI} hold, then there is a neighborhood $U\times\Omega\subset\Re^{1+n}$ of $(\varepsilon,x)$ and a locally Lipschitz continuous function $w\colon U\times\Omega\to\Re^{m+l+l}$ such that for each $(\varepsilon,x)\in U\times\Omega$,
    $$
        H(\varepsilon,x,w(\varepsilon,x))=0.
    $$
    Moreover, for any fixed $\varepsilon\in U\setminus\{0\}$, $w_\varepsilon\colon\Omega\to\Re^{m+l+l}$ is continuously differentiable.
\end{proposition}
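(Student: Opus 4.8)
The plan is to read the statement off from two implicit function theorems: the nonsmooth (Clarke) implicit function theorem \cite{Clarke1990} for the locally Lipschitz claim, and the classical implicit function theorem for the $C^1$ claim when $\varepsilon>0$. Throughout I would regard $H$ as a map $H\colon\Re\times\Re^n\times\Re^{m+2l}\to\Re^{m+2l}$ in the variables $(\varepsilon,x,w)$, whose target dimension matches that of the ``dependent'' variable $w$, and fix the base point $(\bar\varepsilon,\bar x,\bar w)$ with $H(\bar\varepsilon,\bar x,\bar w)=0$.

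First I would collect the hypotheses of the nonsmooth implicit function theorem. Local Lipschitz continuity of $H$ near $(\bar\varepsilon,\bar x,\bar w)$ is Lemma~\ref{lem:Lipschitz.regular}, so $\partial H(\bar\varepsilon,\bar x,\bar w)$ is well defined. The remaining hypothesis is the maximal-rank condition: the projection $\pi_w\partial H(\bar\varepsilon,\bar x,\bar w)$ onto the $w$-columns — i.e.\ the set of square blocks obtained by deleting the $\varepsilon$- and $x$-columns from an element of $\partial H(\bar\varepsilon,\bar x,\bar w)$ — consists of nonsingular matrices. Here I would invoke the block structure of $H$ already used in Lemmas~\ref{lem:nonsingularity.positive} and~\ref{lem:nonsingularity.0}: the first two blocks of $H$ are continuously differentiable, and each entry $\phi_\varepsilon(\lambda_i,z_i)$ of the third block depends only on $(\varepsilon,\lambda_i,z_i)$; hence $\partial H$ splits over rows, and the $w$-columns of any of its elements form precisely a matrix of the type treated in Lemma~\ref{lem:nonsingularity.0} (if $\bar\varepsilon=0$) or its single-valued specialization in Lemma~\ref{lem:nonsingularity.positive} (if $\bar\varepsilon>0$), so that $\pi_w\partial H(\bar\varepsilon,\bar x,\bar w)\subseteq\partial_w H_{\bar\varepsilon}(\bar x,\bar w)$. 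By Proposition~\ref{prop:nonsingularity} every such matrix is nonsingular. The nonsmooth implicit function theorem then yields a neighborhood $U\times\Omega$ of $(\bar\varepsilon,\bar x)$ and a locally Lipschitz $w\colon U\times\Omega\to\Re^{m+2l}$ with $H(\varepsilon,x,w(\varepsilon,x))=0$ there; by the uniqueness in Proposition~\ref{prop:KKT.solution} this map is exactly $w(\varepsilon,x)=w_\varepsilon(x)$, which is the assertion.

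For the last sentence, I would fix $\varepsilon\in U\setminus\{0\}$. Then $\phi_\varepsilon$ is continuously differentiable everywhere, so $H_\varepsilon$ is continuously differentiable jointly in $(x,w)$ (as smooth as $\gamma^\omega$ and $g^\omega$ permit), and by Lemma~\ref{lem:nonsingularity.positive} the Jacobian $\nabla_w H_\varepsilon(x,w_\varepsilon(x))$ is nonsingular at every $x\in\Omega$. Applying the classical implicit function theorem around each point $(x,w_\varepsilon(x))$ produces a $C^1$ local solution branch of $H_\varepsilon(x,\cdot)=0$, which by uniqueness (Proposition~\ref{prop:KKT.solution}) must coincide with $w_\varepsilon$ near $x$; hence $w_\varepsilon$ is continuously differentiable on $\Omega$.

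The step I expect to be the main obstacle is verifying the maximal-rank hypothesis of the nonsmooth implicit function theorem, since that theorem constrains the $w$-columns of the \emph{joint} generalized Jacobian $\partial_{(\varepsilon,x,w)}H$, whereas Proposition~\ref{prop:nonsingularity} controls only the \emph{partial} generalized Jacobian $\partial_w H_\varepsilon$, and in general the projection can be strictly larger. The resolution is the variable-separated form of the nonsmooth block $\Phi_\varepsilon$ (each $\phi_\varepsilon(\lambda_i,z_i)$ ignores $x$ and couples $\varepsilon$ only with its own pair $(\lambda_i,z_i)$), which forces the inclusion $\pi_w\partial H\subseteq\partial_w H_\varepsilon$ used above; once this is in place, the rest is a routine double application of the implicit function theorem glued together by the uniqueness from Proposition~\ref{prop:KKT.solution}. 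A smaller point to keep honest is that $\Omega$ should be taken inside the region where Assumption~\ref{asmp:LF}, Assumption~\ref{asmp:followersVI} and Lemmas~\ref{lem:nonsingularity.positive}--\ref{lem:nonsingularity.0} remain valid; this is unproblematic since those hold throughout $X$.
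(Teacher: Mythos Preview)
Your proposal is correct and follows the same route as the paper: both apply Clarke's Lipschitz implicit function theorem \cite[Corollary to Theorem~7.1.1]{Clarke1990}, fed by the Lipschitz continuity from Lemma~\ref{lem:Lipschitz.regular} and the nonsingularity from Proposition~\ref{prop:nonsingularity}, and then invoke the classical implicit function theorem for the $C^1$ claim when $\varepsilon>0$. The one place you differ is in bridging the gap you correctly flag---Clarke's theorem needs every matrix in $\pi_w\partial_{(\varepsilon,x,w)}H$ to be nonsingular, whereas Proposition~\ref{prop:nonsingularity} controls only $\partial_w H_\varepsilon$. The paper closes this gap by citing \cite[Lemma~2]{Facchinei1999} (and this is where the regularity clause of Lemma~\ref{lem:Lipschitz.regular} enters); you instead argue the inclusion $\pi_w\partial H\subseteq\partial_w H_\varepsilon$ directly from the separated-variable structure of the smoothed Fischer--Burmeister block. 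Your argument is valid and makes the proof self-contained, at the cost of spelling out what the cited lemma packages; the paper's version is terser but leans on the external reference.
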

\begin{proof}
    By Lemma~\ref{lem:Lipschitz.regular}, Proposition~\ref{prop:nonsingularity}, the implicit function theorem \cite[Corollary to Theorem 7.1.1]{Clarke1990}, and \cite[Lemma~2]{Facchinei1999} yields that $y,z,\lambda$ are locally Lipschitz continuous on $\Omega$.
    The latter claim is obtained from a well-known result in elementary calculus.
\end{proof}

Note that the local Lipschitz continuity of $w(\cdot,\cdot)$ from Proposition~\ref{prop:implicit.y.z.lmd} implies that $w_\varepsilon(x)\to w(x^*)$, i.e.,
$$
    y_\varepsilon(x)\to y(x^*),\ z_\varepsilon(x)\to z(x^*),\ \lambda_\varepsilon(x)\to\lambda(x^*),
$$
as $x\to x^*$ and $\varepsilon\downarrow0$, and for any $\varepsilon>0$. 
Now we show some properties of $w(\cdot,\cdot)$ and $w_\varepsilon(\cdot)$.

By the compactness assumption of $X\subset\Re^n$ and continuity assumption, the following statement holds, which is derived from the elementary calculus.
\begin{proposition}\label{prop:boundedness.grad}
    Under \ref{asmp:follower1}--\ref{asmp:follower3} in Assumption~\ref{asmp:LF}, if \ref{asmp:leader2} in Assumption~\ref{asmp:LF} holds, then the function $w\colon\Re^{1+n}\to\Re^{m+l+l}$ is compact-valued over $X$, and for any fixed $\varepsilon\ge 0$, its partial (generalized) Jacobian matrix $\partial_x w_\varepsilon$ is also compact.
\end{proposition}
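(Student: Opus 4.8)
The plan is to reduce the first assertion to the elementary fact that a continuous function maps compact sets to compact sets, and to prove the second assertion by treating the smooth regime $\varepsilon>0$ through the implicit-function-theorem formula for $\nabla_x w_\varepsilon$ and the nonsmooth regime $\varepsilon=0$ through a uniform Lipschitz bound together with upper semicontinuity of the generalized Jacobian. For the compact-valuedness of $w$, fix $\varepsilon\ge0$. By Proposition~\ref{prop:KKT.solution}, $w_\varepsilon$ is well defined on all of $X$, and by Proposition~\ref{prop:implicit.y.z.lmd} it is locally Lipschitz, hence continuous, in a neighbourhood of each $x\in X$; thus $w_\varepsilon$ is continuous on the compact set $X$ (compact by~\ref{asmp:leader2}), so $w_\varepsilon(X)$ is compact. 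Applying the same reasoning to $(\varepsilon,x)\mapsto w(\varepsilon,x)$ on $K\times X$ for any compact $K\subset[0,\infty)$ yields the joint version if that is what is meant.

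For the Jacobian when $\varepsilon>0$, Proposition~\ref{prop:implicit.y.z.lmd} guarantees that $w_\varepsilon$ is continuously differentiable near $X$, so $\partial_x w_\varepsilon(x)=\{\nabla_x w_\varepsilon(x)\}$. Differentiating the identity $H_\varepsilon(x,w_\varepsilon(x))=0$ and using that $\nabla_w H_\varepsilon(x,w_\varepsilon(x))$ is nonsingular for every $x\in X$ (Proposition~\ref{prop:nonsingularity}) gives an expression of the form
$$
\nabla_x w_\varepsilon(x)=-\,\nabla_x H_\varepsilon(x,w_\varepsilon(x))\,\bigl[\nabla_w H_\varepsilon(x,w_\varepsilon(x))\bigr]^{-1}.
$$
Since $H_\varepsilon$ is $C^1$ for $\varepsilon>0$, the maps $x\mapsto\nabla_x H_\varepsilon(x,w_\varepsilon(x))$ and $x\mapsto\nabla_w H_\varepsilon(x,w_\varepsilon(x))$ are continuous on $X$ (composition with the continuous $w_\varepsilon$), and matrix inversion is continuous on the open set of nonsingular matrices; hence $x\mapsto\nabla_x w_\varepsilon(x)$ is continuous on the compact set $X$, and therefore $\bigcup_{x\in X}\partial_x w_\varepsilon(x)$ is compact.

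For $\varepsilon=0$, the function $w_0:=w(0,\cdot)$ is only locally Lipschitz. Applying Proposition~\ref{prop:implicit.y.z.lmd} at each point of $X$ and extracting a finite subcover of the compact set $X$ produces an open neighbourhood of $X$ on which $w_0$ is Lipschitz with a single constant $L$. Then $\|\nabla w_0(x)\|\le L$ at every point of that neighbourhood where $w_0$ is differentiable, and since $\partial_x w_0(x)$ is the convex hull of limits of such Jacobians, every element of $\partial_x w_0(x)$ has norm at most $L$; thus $\bigcup_{x\in X}\partial_x w_0(x)$ is bounded. For closedness, invoke the upper semicontinuity of the generalized Jacobian $x\mapsto\partial_x w_0(x)$ \cite{Clarke1990}: if $v^k\in\partial_x w_0(x^k)$ with $v^k\to v^*$ and (passing to a subsequence, using compactness of $X$) $x^k\to x^*\in X$, then $v^*\in\partial_x w_0(x^*)$, so the union is closed, hence compact.

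The only delicate point is the case $\varepsilon=0$, where there is no clean Jacobian formula: one must first upgrade the pointwise local Lipschitz property of Proposition~\ref{prop:implicit.y.z.lmd} to a \emph{uniform} Lipschitz constant on a neighbourhood of $X$ by a finite-cover argument (this gives boundedness of the union of generalized Jacobians), and then use upper semicontinuity of the Clarke generalized Jacobian to obtain closedness. Everything else is the routine ``continuous image of a compact set is compact'' argument, consistent with the proposition being a consequence of elementary calculus.
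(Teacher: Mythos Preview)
Your proof is correct and, in fact, strictly more detailed than the paper's own treatment: the paper does not prove this proposition at all, stating only that it ``is derived from the elementary calculus.'' Your argument---continuous image of a compact set for the first assertion, the implicit-function formula plus continuity for $\varepsilon>0$, and the finite-cover/uniform-Lipschitz plus upper-semicontinuity argument for $\varepsilon=0$---is exactly the kind of elementary calculus the paper has in mind, and your careful handling of the nonsmooth case $\varepsilon=0$ is a welcome elaboration of a point the paper leaves implicit.
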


\begin{lemma}\label{lem:convergence.degenerate}
    For a positive sequence $\{\varepsilon_k\}$ converging to $0$, let $a_{\varepsilon_k}>0$ and $b_{\varepsilon_k}>0$ for all $k$ and converging to $0$.
    Suppose that
    $$
        \hat{\xi}^k:=\frac{a_{\varepsilon_k}}{\sqrt{a_{\varepsilon_k}^2+b_{\varepsilon_k}^2+2\varepsilon_k^2}},\ 
        \hat{\eta}^k:=\frac{b_{\varepsilon_k}}{\sqrt{a_{\varepsilon_k}^2+b_{\varepsilon_k}^2+2\varepsilon_k^2}}.
    $$
    Then their limits, if they exist, are denoted by $\hat{\xi}^\circ$ and $\hat{\eta}^\circ$, respectively, and satisfy $\hat{\xi}^\circ,\hat{\eta}^\circ\in[0,1]$ and $(\hat{\xi}^\circ)^2+(\hat{\eta}^\circ)^2\le1$.
\end{lemma}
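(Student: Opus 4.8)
The plan is to work directly with the defining expressions for $\hat{\xi}^k$ and $\hat{\eta}^k$ and establish the three claimed properties in turn, using only elementary inequalities. First I would observe that for every $k$ the denominator dominates each numerator: since $a_{\varepsilon_k}>0$, $b_{\varepsilon_k}>0$, and $\varepsilon_k>0$, we have $a_{\varepsilon_k}^2 \le a_{\varepsilon_k}^2 + b_{\varepsilon_k}^2 + 2\varepsilon_k^2$, hence $0 < \hat{\xi}^k \le 1$, and symmetrically $0 < \hat{\eta}^k \le 1$. Passing to the limit (assumed to exist) preserves the non-strict inequalities, giving $\hat{\xi}^\circ,\hat{\eta}^\circ\in[0,1]$.

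Next I would bound the sum of squares. A direct computation gives
$$
(\hat{\xi}^k)^2 + (\hat{\eta}^k)^2 = \frac{a_{\varepsilon_k}^2 + b_{\varepsilon_k}^2}{a_{\varepsilon_k}^2 + b_{\varepsilon_k}^2 + 2\varepsilon_k^2} \le 1,
$$
where the inequality is strict for each $k$ because $\varepsilon_k>0$, but only the non-strict form survives in the limit. Since the map $(s,t)\mapsto s^2+t^2$ is continuous, $(\hat{\xi}^\circ)^2 + (\hat{\eta}^\circ)^2 = \lim_{k\to\infty}\big((\hat{\xi}^k)^2+(\hat{\eta}^k)^2\big) \le 1$. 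This yields all three assertions.

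I do not anticipate a genuine obstacle here; the statement is quantitatively routine, and its only real content is that the perturbation term $2\varepsilon_k^2$ makes the sum of squares strictly less than $1$ for each $k$, so that in the limit one cannot conclude equality—only the inequality $(\hat{\xi}^\circ)^2+(\hat{\eta}^\circ)^2\le 1$. The one point worth stating carefully is that the hypotheses $a_{\varepsilon_k},b_{\varepsilon_k}\to 0$ and $\varepsilon_k\to 0$ are not actually needed for the bounds themselves (they hold term by term for any positive data); those hypotheses merely reflect the intended application, where $a_{\varepsilon_k}=[z_{\varepsilon_k}(x)]_i$ and $b_{\varepsilon_k}=[\lambda_{\varepsilon_k}(x)]_i$ approach a degenerate point. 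Accordingly the proof will be short, and I would simply remark that the limiting values, when they exist, are exactly the quantities $\xi_i,\eta_i$ appearing in the description of $\partial_w H_0$ in Lemma~\ref{lem:nonsingularity.0}, which is why this lemma is needed for the convergence analysis that follows.
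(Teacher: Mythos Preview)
Your proof is correct and follows essentially the same idea as the paper: bound each ratio in $[0,1]$ and then bound the sum of squares by~$1$, passing to the limit. The only cosmetic difference is that the paper first rewrites $a_{\varepsilon_k}=r_k\cos\theta_k$, $b_{\varepsilon_k}=r_k\sin\theta_k$ before computing $(\hat{\xi}^k)^2+(\hat{\eta}^k)^2=1/(1+2(\varepsilon_k/r_k)^2)<1$, whereas you obtain the equivalent bound $\frac{a_{\varepsilon_k}^2+b_{\varepsilon_k}^2}{a_{\varepsilon_k}^2+b_{\varepsilon_k}^2+2\varepsilon_k^2}<1$ directly; your version is slightly more economical, and your observation that the convergence hypotheses on $a_{\varepsilon_k},b_{\varepsilon_k},\varepsilon_k$ are not used in the bounds themselves is accurate.
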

\begin{proof}
    Let $a_{\varepsilon_k}=r_k\cos\theta_k$ and $b_{\varepsilon_k}=r_k\sin\theta_k$, where $r_k\to0$.
    Then we have
    \[
        \hat{\xi}^k=\frac{\cos\theta_k}{\sqrt{1+2(\varepsilon_k/r_k)^2}},\ \hat{\eta}^k=\frac{\sin\theta_k}{\sqrt{1+2(\varepsilon_k/r_k)^2}}.
    \]
    Obviously, their limits $\hat{\xi}^\circ,\hat{\eta}^\circ$ satisfy $\hat{\xi}^\circ,\hat{\eta}^\circ\in[0,1]$.
    Furthermore,
    \[
        (\hat{\xi}^k)^2+(\hat{\eta}^k)^2=\frac{1}{1+2(\varepsilon_k/r_k)^2}<1,
    \]
    for all $k=1,2,\dots$.
    Therefore, $(\hat{\xi}^\circ)^2+(\hat{\eta}^\circ)^2\le1$.
\end{proof}

\begin{proposition}\label{prop:response.function}
    Under \ref{asmp:follower1}--\ref{asmp:follower3} in Assumption~\ref{asmp:LF} and Assumption~\ref{asmp:followersVI}, there is a positive sequence $\{\varepsilon_k\}$ tending to $0$ such that
    \begin{align}\label{ieq:prop:response.inclusion}
        \left\{\lim_{k\to\infty}\nabla w_{\varepsilon_k}(x^k)\right\}\subset\partial w(x^*).
    \end{align}
\end{proposition}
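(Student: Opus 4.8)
The plan is to differentiate the identity $H_\varepsilon(x,w_\varepsilon(x))=0$ implicitly and then track the limit of the resulting Jacobian formula as the smoothing parameter vanishes. Fix any sequence $x^k\to x^*$ and any positive $\varepsilon_k\downarrow 0$. By Proposition~\ref{prop:implicit.y.z.lmd}, $w_{\varepsilon_k}$ is continuously differentiable in a neighborhood of $x^*$, and by Proposition~\ref{prop:nonsingularity} the matrix $\nabla_w H_{\varepsilon_k}(x^k,w_{\varepsilon_k}(x^k))$ is nonsingular, so the implicit function theorem gives
\begin{equation*}
    \nabla w_{\varepsilon_k}(x^k)=-\,\nabla_x H_{\varepsilon_k}\big(x^k,w_{\varepsilon_k}(x^k)\big)\,\big[\nabla_w H_{\varepsilon_k}\big(x^k,w_{\varepsilon_k}(x^k)\big)\big]^{-1}.
\end{equation*}
By Proposition~\ref{prop:boundedness.grad} the left-hand side stays in a compact set, and the remark following Proposition~\ref{prop:implicit.y.z.lmd} gives $w_{\varepsilon_k}(x^k)\to w(x^*)$; passing if necessary to a subsequence, still denoted $\{\varepsilon_k\}$ (this is the sequence whose existence is asserted), we may assume $\nabla_x H_{\varepsilon_k}$, $\nabla_w H_{\varepsilon_k}$, and $\nabla w_{\varepsilon_k}(x^k)$ all converge.

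Next I would identify the two limiting matrices. Since $H_\varepsilon$ depends on $x$ only through $G(x,y)+\nabla_y g(x,y)\lambda$ and $g(x,y)+z$ --- the Fischer--Burmeister block $\Phi_\varepsilon$ not involving $x$ --- the matrix $\nabla_x H_{\varepsilon_k}$ in fact does not depend on $\varepsilon_k$ and is continuous in $(x,w)$, so $\nabla_x H_{\varepsilon_k}(x^k,w_{\varepsilon_k}(x^k))\to\nabla_x H_0(x^*,w(x^*))$. For $\nabla_w H_{\varepsilon_k}$ I would use the block form exhibited in the proof of Lemma~\ref{lem:nonsingularity.positive}: the $L$- and $A$-blocks converge by continuity to the matrices in \eqref{eq:prop:nonsingularity.Jacobian.1}--\eqref{eq:prop:nonsingularity.Jacobian.2} evaluated at $(x^*,w(x^*))$, while the diagonal blocks $\Xi_k,H_k$ have $i$th entries equal to the ratio of $[z_{\varepsilon_k}(x^k)]_i$, resp.\ $[\lambda_{\varepsilon_k}(x^k)]_i$, to $\sqrt{[z_{\varepsilon_k}(x^k)]_i^2+[\lambda_{\varepsilon_k}(x^k)]_i^2+2\varepsilon_k^2}$, minus $1$. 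Splitting according to the index sets at $x^*$: if $i\in{\cal J}_{+0}(x^*)$ then $[z_{\varepsilon_k}(x^k)]_i\to z_i(x^*)>0$ and $[\lambda_{\varepsilon_k}(x^k)]_i\to0$, so the $\Xi_k$-entry tends to $0$ and the $H_k$-entry to $-1$; symmetrically if $i\in{\cal J}_{0+}(x^*)$; and if $i\in{\cal J}_{00}(x^*)$, then $[z_{\varepsilon_k}(x^k)]_i,[\lambda_{\varepsilon_k}(x^k)]_i>0$ both tend to $0$ by Proposition~\ref{prop:KKT.solution}, so Lemma~\ref{lem:convergence.degenerate} gives $\Xi_k$- and $H_k$-entries tending to $\hat\xi_i^\circ-1$ and $\hat\eta_i^\circ-1$ with $\hat\xi_i^\circ,\hat\eta_i^\circ\in[0,1]$ and $(\hat\xi_i^\circ)^2+(\hat\eta_i^\circ)^2\le1$. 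These limiting entries are exactly those admitted in the description of $\partial_w H_0(x^*,w(x^*))$ in Lemma~\ref{lem:nonsingularity.0}, so $M^*:=\lim_k\nabla_w H_{\varepsilon_k}(x^k,w_{\varepsilon_k}(x^k))\in\partial_w H_0(x^*,w(x^*))$.

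Since $y(x^*)$ is feasible for $g(x^*,\cdot)\le0$, the LICQ hypothesis of Lemma~\ref{lem:nonsingularity.0} holds at $w(x^*)$, hence $M^*$ is nonsingular and therefore
\begin{equation*}
    \lim_{k\to\infty}\nabla w_{\varepsilon_k}(x^k)=-\,\nabla_x H_0(x^*,w(x^*))\,(M^*)^{-1}.
\end{equation*}
It remains to see that this matrix lies in $\partial w(x^*)$. The pair $\big(\nabla_x H_0(x^*,w(x^*)),M^*\big)$ is, by construction, a limit of Jacobians of the continuously differentiable maps $H_{\varepsilon_k}$, and because the only nonsmooth component of $H_0$ is the convex and regular Fischer--Burmeister block (Lemma~\ref{lem:Lipschitz.regular}), such a limit lies in $\partial H_0(x^*,w(x^*))$; this is the consistency property that limits of $\nabla\phi_\varepsilon$ along $\varepsilon\downarrow0$ belong to $\partial\phi_0$, which is once more the computation behind Lemma~\ref{lem:convergence.degenerate}. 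Combining this with the Clarke implicit function theorem \cite[Corollary to Theorem 7.1.1]{Clarke1990}, the local Lipschitz continuity and regularity of $H$ (Lemma~\ref{lem:Lipschitz.regular}), and the nonsingularity of every element of $\partial_w H_0(x^*,w(x^*))$ (Lemma~\ref{lem:nonsingularity.0}) yields \eqref{ieq:prop:response.inclusion}; this is the conclusion of \cite[Lemma~2]{Facchinei1999} transplanted to the present setting.

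I expect this last step to be the main obstacle. Read naively, the Clarke implicit function theorem delivers only the outer estimate $\partial w(x^*)\subseteq\{-N_x N_w^{-1}\}$ over pairs $(N_x,N_w)$ drawn from $\partial H_0(x^*,w(x^*))$ --- the inclusion in the unhelpful direction --- so one must exploit the regularity of $H$, which forces the generalized-Jacobian chain rule to hold with equality, to be sure that the particular limit $-\nabla_x H_0(x^*,w(x^*))(M^*)^{-1}$ is genuinely attained inside $\partial w(x^*)$ and not merely inside that larger set. Everything preceding it --- the implicit differentiation, the compactness and subsequence extraction, the index-set case analysis via Lemma~\ref{lem:convergence.degenerate}, and the nonsingularity via Lemma~\ref{lem:nonsingularity.0} --- is routine.
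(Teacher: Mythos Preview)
Your diagnosis of the obstacle is correct, but your proposed cure does not work, and this is where your argument genuinely diverges from the paper. Regularity of $H$ (Lemma~\ref{lem:Lipschitz.regular}) tells you that the generalized Jacobian $\partial H$ behaves well under directional differentiation; it does \emph{not} upgrade the Clarke implicit-function estimate to an equality for $\partial w$. The Corollary to Theorem~7.1.1 in \cite{Clarke1990} gives only existence and Lipschitzness of $w$, and the outer bound $\partial w(x^*)\subseteq\{-N_xN_w^{-1}:(N_x,N_w)\in\partial H\}$ is all one can extract in general---so knowing that your limit $-\nabla_xH_0(x^*,w(x^*))(M^*)^{-1}$ sits in the right-hand set does not place it in $\partial w(x^*)$. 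Likewise, \cite[Lemma~2]{Facchinei1999} is a Lipschitz-continuity statement (that is exactly how the paper uses it in Proposition~\ref{prop:implicit.y.z.lmd}); it does not deliver the inclusion you want.

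The paper sidesteps this by never invoking an implicit-function formula for $\partial w$. Instead it uses the \emph{definition} $\partial w(x^*)=\co\{\lim_{x\to x^*,\,x\in\mathcal D}\nabla w(x)\}$ directly. For $x\in\mathcal D$ one can differentiate $H_0(x,w(x))\equiv0$ classically, obtaining the same block formula as your $\nabla w_{\varepsilon_k}(x^k)$ but with the diagonal parameters $(\xi_i,\eta_i)$ lying on the quarter-circle $\xi_i^2+\eta_i^2=1$ rather than in the quarter-disk $\le1$. Taking the convex hull over limits relaxes the circle to the disk, and since your smoothed limit produces exactly $(\hat\xi_i^\circ,\hat\eta_i^\circ)$ in the disk (Lemma~\ref{lem:convergence.degenerate}), the inclusion follows. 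There is a second idea you are missing entirely: the statement asserts the \emph{existence} of a suitable $\{\varepsilon_k\}$, and in the case $x^*\notin\mathcal D$ the paper exploits this freedom by coupling $\varepsilon_k$ to $x^k\in\mathcal D$ so that $\|\nabla w_{\varepsilon_k}(x^k)-\nabla w(x^k)\|\le\|x^k-x^*\|$, forcing the smoothed Jacobians to track the honest ones. Your opening line ``fix \emph{any} positive $\varepsilon_k\downarrow0$'' discards precisely the flexibility that makes the argument go through.
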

\begin{proof}
    By the local Lipschitz continuity of $w(\cdot)$, the generalized Jacobian $\partial w(x)$ is
    $$
        \partial w(x^*)=\co\left\{\lim_{x\to x^*}\nabla w(x)\ \middle|\ x\in{\cal D}\right\},
    $$
    where ${\cal D}\subset\Re^n$ denotes the set of points at which $w$ is differentiable; to be exact, that of points at which $y$ and $\lambda$ are differentiable because 
    if $y(\cdot)$ is differentiable at $x^*$, $z(\cdot)$ is also differentiable at $x^*$ from the definition of $z$.
    Hence, it suffices to show, instead of \eqref{ieq:prop:response.inclusion}, that
    \begin{align}\label{ieq:inclusion.equiv}
        \left\{\lim_{k\to\infty}\nabla w_{\varepsilon_k}(x^k)\right\}\subset\co\left\{\lim_{x\to x^*}\nabla w(x)\ \middle|\ x\in{\cal D}\right\}.
    \end{align}

    For $\varepsilon_k>0$, Proposition~\ref{prop:implicit.y.z.lmd} leads that the gradient of $w_{\varepsilon_k}$ at $x^k$ is given by
    \begin{align}\label{eq:prop:response.derivative}
        \nabla w_{\varepsilon_k}(x^k)&=\left[\nabla y_{\varepsilon_k}(x^k),\ \nabla z_{\varepsilon_k}(x^k),\ \nabla\lambda_{\varepsilon_k}(x^k)\right]\nonumber\\
        &=-
        \nabla_x H_{\varepsilon_k}(x^k,y_{\varepsilon_k}(x^k),z_{\varepsilon_k}(x^k),\lambda_{\varepsilon_k}(x^k))\nonumber\\
        &\qquad\qquad \left[
        \begin{array}{c}
            \nabla_y H_{\varepsilon_k}(x^k,y_{\varepsilon_k}(x^k),z_{\varepsilon_k}(x^k),\lambda_{\varepsilon_k}(x^k)) \\
            \nabla_z H_{\varepsilon_k}(x^k,y_{\varepsilon_k}(x^k),z_{\varepsilon_k}(x^k),\lambda_{\varepsilon_k}(x^k)) \\
            \nabla_\lambda H_{\varepsilon_k}(x^k,y_{\varepsilon_k}(x^k),z_{\varepsilon_k}(x^k),\lambda_{\varepsilon_k}(x^k)) 
        \end{array}
        \right]^{-1}\nonumber\\
        &=
        -\left[ {L'_{\varepsilon_k}},\ {A'_{\varepsilon_k}},\ O \right]
        \left[
        \begin{array}{ccc}
            L_{\varepsilon_k} & A_{\varepsilon_k} & O \\
            O &  I & \Xi_{\varepsilon_k} \\
            A_{\varepsilon_k}^\top & O & H_{\varepsilon_k}
        \end{array}
        \right]^{-1},
    \end{align}
    where
    \[
    \begin{aligned}
    L_{\varepsilon_k}&:=\nabla_y G(x^k,y_{\varepsilon_k}(x^k))+\sum_{i=1}^l[\lambda_{\varepsilon_k}(x^k)]_i\nabla_{yy}^2 g_i(x^k,y_{\varepsilon_k}(x^k)),\\
    A_{\varepsilon_k}&:=\nabla_y g(x^k,y_{\varepsilon_k}(x^k)),\\
    L'_{\varepsilon_k}&:=\nabla_x G(x^k,y_{\varepsilon_k}(x^k))+\sum_{i=1}^{l}[\lambda_{\varepsilon_k}(x^k)]_i\nabla^2_{xy}g_i(x^k,y_{\varepsilon_k}(x^k)),\\
    A'_{\varepsilon_k}&:=\nabla_x g(x^k,y_{\varepsilon_k}(x^k)),\\
    \Xi_{\varepsilon_k}&:=\underset{i=1,\dots,l}{\diag}\left(\frac{[z_{\varepsilon_k}(x^k)]_i}{\sqrt{[z_{\varepsilon_k}(x^k)]_i^2+[\lambda_{\varepsilon_k}(x^k)]_i^2+2\varepsilon_k^2}}-1\right)\\
    &=\underset{i=1,\dots,l}{\diag}\left(\frac{[z_{\varepsilon_k}(x^k)]_i}{[z_{\varepsilon_k}(x^k)]_i+[\lambda_{\varepsilon_k}(x^k)]_i}-1\right),\\
    H_{\varepsilon_k}&:=\underset{i=1,\dots,l}{\diag}\left(\frac{[\lambda_{\varepsilon_k}(x^k)]_i}{\sqrt{[z_{\varepsilon_k}(x^k)]_i^2+[\lambda_{\varepsilon_k}(x^k)]_i^2+2\varepsilon_k^2}}-1\right)\\
    &=\underset{i=1,\dots,l}{\diag}\left(\frac{[\lambda_{\varepsilon_k}(x^k)]_i}{[z_{\varepsilon_k}(x^k)]_i+[\lambda_{\varepsilon_k}(x^k)]_i}-1\right).
    \end{aligned}
    \]
    Here, the second equality in $\Xi_{\varepsilon_k}$ and $H_{\varepsilon_k}$ holds from Proposition~\ref{prop:KKT.solution} and since $\varepsilon>0$.
    Let
    \[
        \hat{\xi}^k_i:=\frac{[z_{\varepsilon_k}(x^k)]_i}{\sqrt{[z_{\varepsilon_k}(x^k)]_i^2+[\lambda_{\varepsilon_k}(x^k)]_i^2+2\varepsilon_k^2}},\ 
        \hat{\eta}^k_i:=\frac{[\lambda_{\varepsilon_k}(x^k)]_i}{\sqrt{[z_{\varepsilon_k}(x^k)]_i^2+[\lambda_{\varepsilon_k}(x^k)]_i^2+2\varepsilon_k^2}},
    \]
    and the limits of $\hat{\xi}^k$ and $\hat{\eta}^k$ be $\hat{\xi}^\circ$ and $\hat{\eta}^\circ$, respectively.
    Since $(z_{\varepsilon_k},\lambda_{\varepsilon_k})$ is compact-valued on $X$ from Proposition~\ref{prop:boundedness.grad}, there exists a limit for appropriately chosen $\{\varepsilon_k\}$ and using Lemma~\ref{lem:convergence.degenerate}, we have
    $$
        \bar{\Xi}^\circ_i=\left\{
        \begin{array}{ll}
            -1 & \text{if } i\in{\cal J}_{0+}(x^*),\\
            \hat{\xi}^\circ_i-1 & \text{if } i\in{\cal J}_{00}(x^*),\\
            0 & \text{if } i\in{\cal J}_{+0}(x^*),
        \end{array}
        \right.\quad
        \bar{H}^\circ_i=\left\{
        \begin{array}{ll}
            0 & \text{if } i\in{\cal J}_{0+}(x^*),\\
            \hat{\eta}^\circ_i-1 & \text{if } i\in{\cal J}_{00}(x^*),\\
            -1 & \text{if } i\in{\cal J}_{+0}(x^*),
        \end{array}
        \right.
    $$
    where $\hat{\xi}^\circ_i,\hat{\eta}^\circ_i\in[0,1]$ such that $(\hat{\xi}^\circ_i)^2+(\hat{\eta}^\circ_i)^2\le 1$.
    In addition, since $G(\cdot)$ and $g(\cdot)$ are smooth, and $(y,z,\lambda)(\cdot,\cdot)$ is continuous, we can take the limits $L_{\varepsilon_k}\to\bar{L}$, $A_{\varepsilon_k}\to\bar{A}$, $L'_{\varepsilon_k}\to\bar{L}'$, and $A'_{\varepsilon_k}\to\bar{A}'$.
    Letting $\nabla w^\circ(x^*):=(\nabla y^\circ(x^*),\nabla z^\circ(x^*),\nabla \lambda^\circ(x^*))$ be the limit of \eqref{eq:prop:response.derivative}, we have
    $$
        \nabla w^\circ(x^*)^\top=
        \left[
        \begin{array}{c}
             \nabla y^\circ(x^*)^\top \\
             \nabla z^\circ(x^*)^\top \\
             \nabla \lambda^\circ(x^*)^\top
        \end{array}
        \right]
        =
        -\left[
        \begin{array}{ccc}
            \bar{L}^\top & O & \bar{A} \\
            \bar{A}^\top & I & O \\
            O & \bar{\Xi}^\circ & \bar{H}^\circ
        \end{array}
        \right]^{-1}
        \left[
        \begin{array}{c}
            {\bar{L'}}^\top \\
            {\bar{A'}}^\top \\
            O
        \end{array}\right].
    $$
    
    Now we show that
    \begin{align}\label{show.belonging}
        \nabla w^\circ(x^*) \in
        \co\left\{\lim_{x\to x^*,x\in{\cal D}}\nabla w(x)\right\}.
    \end{align}
    for $\{\varepsilon_k\}$ appropriately chosen.

    We first consider the case where $x^*\in{\cal D}$.
    Let
    $$
        T(x):=H_0(x,w(x))\equiv0\quad\forall x\in X,
    $$
    because $w(x)$ is the solution to the system $H_0(x,w)=0$ for a fixed $x\in X$.
    For a given $x^k\in{\cal D}$, by differentiating both sides of $T(x)\equiv0$, we have
    $$
        \nabla w(x^k)^\top=
        \left[
        \begin{array}{c}
            \nabla y(x^k)^\top \\
            \nabla z(x^k)^\top \\
            \nabla \lambda(x^k)^\top
        \end{array}
        \right]
        =-\left[
        \begin{array}{ccc}
            {L_k}^\top & O & A_k \\
            A_k^\top & I & O \\
            O & \Xi_k & H_k
        \end{array}
        \right]^{-1}
        \left[
        \begin{array}{c}
            {L'_k}^\top \\
            {A'_k}^\top \\
            O
        \end{array}
        \right],
    $$
    where
    $$
        \begin{aligned}
        L_k&:=\nabla_y G(x^k,y(x^k))+\sum_{i=1}^l{\lambda_i}(x^k)\nabla_{yy}^2 g_i(x^k,y(x^k)),\\
        A_k&:=\nabla_y g(x^k,y(x^k)),\\
        L'_k&:=\nabla_x G(x^k,y(x^k))+\sum_{i=1}^{l}{\lambda_i}(x^k)\nabla^2_{xy}g_i(x^k,y(x^k)),\\
        A'_k&:=\nabla_x g(x^k,y(x^k)),\\
        \Xi_k&:=\underset{i=1,\dots,l}{\diag}\left(\frac{{z_i}(x^k)}{\sqrt{{z_i}(x^k)^2+{\lambda_i}(x^k)^2}}-1\right),\\
        H_k&:=\underset{i=1,\dots,l}{\diag}\left(\frac{{\lambda_i}(x^k)}{\sqrt{{z_i}(x^k)^2+{\lambda_i}(x^k)^2}}-1\right),
        \end{aligned}
    $$
    Note that letting
    $$
        \xi^k_i:=\frac{{z_i}(x^k)}{\sqrt{{z_i}(x^k)^2+{\lambda_i}(x^k)^2}},\quad\eta^k_i:=\frac{{\lambda_i}(x^k)}{\sqrt{{z_i}(x^k)^2+{\lambda_i}(x^k)^2}},
    $$
    where $\xi^k_i,\eta^k_i\in[0,1]$ such that $(\xi^k_i)^2+(\eta^k_i)^2=1$ yields
    $$
        (\Xi_k)_i=\left\{
        \begin{array}{ll}
            -1 & \text{if }i\in{\cal J}_{0+}(x^k),\\
            \xi^k_i-1 & \text{if }i\in{\cal J}_{00}(x^k),\\
            0 & \text{if }i\in{\cal J}_{+0}(x^k),
        \end{array}
        \right.\quad
        (H_k)_i=\left\{
        \begin{array}{ll}
            0 & \text{if }i\in{\cal J}_{0+}(x^k),\\
            \eta^k_i-1 & \text{if }i\in{\cal J}_{00}(x^k),\\
            -1 & \text{if }i\in{\cal J}_{+0}(x^k).
        \end{array}
        \right.
    $$
    Now as $k\to\infty$, i.e., $x^k\to x^*$, by the continuity of the involved functions, we have
    \begin{align}\label{eq:limit.diff.response}
        \nabla w(x^*)^\top=
        \left[
        \begin{array}{c}
            \nabla y(x^*)^\top \\
            \nabla z(x^*)^\top \\
            \nabla \lambda(x^*)^\top
        \end{array}
        \right]
        =
        -\left[
        \begin{array}{ccc}
            \bar{L}^\top & O & \bar{A} \\
            \bar{A}^\top & I & O \\
            O & \bar{\Xi} & \bar{H}
        \end{array}
        \right]^{-1}
        \left[
        \begin{array}{c}
            {\bar{L'}}^\top \\
            {\bar{A'}}^\top \\
            O
        \end{array}\right],
    \end{align}
    where 
    \begin{align}\label{eq:FB.limit}
        \bar{\Xi}_i=\left\{
        \begin{array}{ll}
            -1 & \text{if }i\in{\cal J}_{0+}(x^*), \\
            \bar{\xi}_i-1 & \text{if }i\in{\cal J}_{00}(x^*),\\
            0 & \text{if }i\in{\cal J}_{+0}(x^*),
        \end{array}
        \right.\quad
        \bar{H}_i=\left\{
        \begin{array}{ll}
            0 & \text{if }i\in{\cal J}_{0+}(x^*), \\
            \bar{\eta}_i-1 & \text{if }i\in{\cal J}_{00}(x^*),\\
            -1 & \text{if }i\in{\cal J}_{+0}(x^*),
        \end{array}
        \right.
    \end{align}
    for $\bar{\xi}_i,\bar{\eta}_i\in[0,1]$ such that $(\bar{\xi}_i)^2+(\bar{\eta}_i)^2=1$.
    Now taking the convex hull of the set that consists of \eqref{eq:limit.diff.response} yields that
    $$
        \begin{aligned}
        &\co\left\{\lim_{\substack{x\to x^*\\x\in{\cal D}}}\nabla w(x)\right\}=\\
        &\left\{
        \nabla w(x^*)
        \ \middle|\ 
        \text{\eqref{eq:limit.diff.response} and \eqref{eq:FB.limit} for $\bar{\xi}_i,\bar{\eta}_i\in[0,1]$ s.t. $\bar{\xi}_i^2+\bar{\eta}_i^2\le1$ for all $i\in{\cal J}_{00}(x^*)$.}
        \right\}
        \end{aligned}
    $$
    From the observation above, we can choose $\bar{\xi}_i$ and $\bar{\eta}_i$ so that \eqref{show.belonging} holds.
    
    Next, we consider the case where $x^*\notin{\cal D}$.
    For $x^k\in{\cal D}$, by the continuity of $\nabla w_{\varepsilon_k}$, we have
    $$
        \lim_{\varepsilon\downarrow0}\nabla w_{\varepsilon}(x^k)-\nabla w(x^k)=0.
    $$
    Hence for $x_k\in{\cal D}$, there exists $\varepsilon_k>0$ such that
    $$
        \|\nabla w_{\varepsilon_k}(x^k)-\nabla w(x^k)\|\le\|x_k-x^*\|,
    $$
    since the value of the left-hand side of the above inequality is sufficiently close to 0 for $\varepsilon_k$ sufficiently small.
    This implies that $\|\nabla w_{\varepsilon_k}(x^k)-\nabla w(x^k)\|\to 0$ as $x_k\to x^*$ and $\varepsilon_k\to 0$.
    Since $\{\nabla w_{\varepsilon_k}(x^k)\}$ has a limit, we have
    $$
    \begin{aligned}
        \lim_{k\to\infty}\nabla w(x^k)&=
        \lim_{k\to\infty}[\nabla w_{\varepsilon_k}(x^k)-
        \{\nabla w_{\varepsilon_k}(x^k)-\nabla w(x^k)\}]\\
        &=\lim_{k\to\infty}\nabla w_{\varepsilon_k}(x^k) - 
        \lim_{k\to\infty}\{\nabla w_{\varepsilon_k}(x^k)-\nabla w(x^k)\}\\
        &=\nabla w^\circ(x^*)- 0 =\nabla w^\circ(x^*)
    \end{aligned}
    $$
    Hence it follows that
    $$
        \nabla w^\circ(x^*)=
        \lim_{k\to\infty}\nabla w(x^k)\in\co\left\{\lim_{\substack{x\to x^*\\x\in{\cal D}}}\nabla w(x)\right\}.
    $$
    We have thus completed the proof.
\end{proof}

\begin{corollary}\label{cor:y.inclusion}
    Under the same assumption as Proposition~\ref{prop:response.function}, for a positive sequence $\{\varepsilon_k\}$ that satisfies \eqref{ieq:prop:response.inclusion}, the following inclusion also holds:
    $$
        \left\{\lim_{k\to\infty}\nabla y_{\varepsilon_k}(x^k)\right\}\subset\partial y(x^*).
    $$
\end{corollary}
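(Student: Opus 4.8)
The plan is to derive the corollary from Proposition~\ref{prop:response.function} almost for free, by exploiting the block structure $w=(y,z,\lambda)$ together with the elementary fact that a coordinate projection of a Clarke generalized Jacobian is contained in the generalized Jacobian of the projected block of components. In particular, no new limiting analysis is needed beyond what was already carried out for $w$; throughout, let $\proj_y$ denote the (linear) projection of a matrix onto its first, i.e.\ $y$-, block of rows.

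First I would take the positive sequence $\{\varepsilon_k\}$ furnished by Proposition~\ref{prop:response.function}, so that the matrix limit $V:=\lim_{k\to\infty}\nabla w_{\varepsilon_k}(x^k)$ exists and satisfies $V\in\partial w(x^*)$. Since, as displayed in~\eqref{eq:prop:response.derivative}, $\nabla w_{\varepsilon_k}(x^k)=[\nabla y_{\varepsilon_k}(x^k),\ \nabla z_{\varepsilon_k}(x^k),\ \nabla\lambda_{\varepsilon_k}(x^k)]$, the convergence of the full matrix forces $\lim_{k\to\infty}\nabla y_{\varepsilon_k}(x^k)$ to exist, and this limit equals precisely the $y$-block $\proj_y V$ of $V$. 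Hence it suffices to verify $\proj_y V\in\partial y(x^*)$.

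For that inclusion, recall that $w$ is locally Lipschitz near $x^*$ (Proposition~\ref{prop:implicit.y.z.lmd}), so $\partial w(x^*)=\co\{\lim_{x\to x^*,\,x\in{\cal D}}\nabla w(x)\}$, where ${\cal D}$ is the differentiability set of $w$; as noted in the proof of Proposition~\ref{prop:response.function}, ${\cal D}$ is exactly the set of points at which $y$ and $\lambda$ are differentiable, so in particular ${\cal D}\subseteq{\cal D}_y$, where ${\cal D}_y\supseteq{\cal D}$ is the differentiability set of $y$. Because $\proj_y$ is linear and continuous, it commutes with convex hulls and with the convergent matrix limits above, and for any convergent sequence one has $\proj_y(\lim_k\nabla w(x^k))=\lim_k\nabla y(x^k)$; therefore
\[
    \proj_y\,\partial w(x^*)\subseteq\co\left\{\lim_{x\to x^*,\,x\in{\cal D}}\nabla y(x)\right\}\subseteq\co\left\{\lim_{x\to x^*,\,x\in{\cal D}_y}\nabla y(x)\right\}=\partial y(x^*),
\]
where the last equality is just the definition of the generalized Jacobian of the locally Lipschitz map $y$. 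Applying this with $V\in\partial w(x^*)$ gives $\lim_{k\to\infty}\nabla y_{\varepsilon_k}(x^k)=\proj_y V\in\partial y(x^*)$, as claimed.

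The only point deserving care is the direction of the projection inclusion: $\proj_y\partial w(x^*)\subseteq\partial y(x^*)$ holds because every differentiability point of $w$ is a differentiability point of $y$, but the reverse containment fails in general, so one must resist identifying $\partial w(x^*)$ with the product $\partial y(x^*)\times\partial z(x^*)\times\partial\lambda(x^*)$. A more pedestrian alternative would be to pass to the limit directly in the first block of~\eqref{eq:prop:response.derivative} and match it against the $y$-block of~\eqref{eq:limit.diff.response}, rerunning the bookkeeping in the proof of Proposition~\ref{prop:response.function}; the projection argument simply avoids that repetition.
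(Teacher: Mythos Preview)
Your argument is correct and is essentially the same as the paper's: both start from $V:=\lim_k\nabla w_{\varepsilon_k}(x^k)\in\partial w(x^*)$ and then project onto the $y$-block. The only difference is packaging: the paper obtains the projection step by citing Clarke's product inclusion $\partial w(x^*)\subset\partial y(x^*)\times\partial z(x^*)\times\partial\lambda(x^*)$ (Clarke, Proposition~2.6.2), whereas you reprove that same one-sided inclusion directly from the definition of the generalized Jacobian via $\mathcal{D}\subseteq\mathcal{D}_y$; your caution against \emph{identifying} $\partial w$ with the product is well taken, but the paper only uses the harmless $\subset$ direction, so both routes are sound and equivalent.
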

\begin{proof}
    By Proposition~\ref{prop:response.function}, we have
    $$
        \lim_{k\to\infty}\nabla w_{\varepsilon_k}(x^k)=
        \nabla w^\circ(x^*),
    $$
    and \cite[Proposition 2.6.2]{Clarke1990} leads to
    $$
        \partial w(x^*)\subset\partial y(x^*)\times\partial z(x^*)\times\partial \lambda(x^*).
    $$
    It then follows that
    $$
        \nabla w^\circ(x^*)=[\nabla y^\circ(x^*),\nabla z^\circ(x^*),\nabla \lambda^\circ(x^*)]\in\partial y(x^*)\times\partial z(x^*)\times\partial \lambda(x^*)
    $$
    
\end{proof}

By the observation from Corollary~\ref{cor:y.inclusion}, it may be reasonable to assume that
\begin{align}\label{eq:inclusion.i}
    \left\{\lim_{k\to\infty}\nabla_{x^\nu}y_{\varepsilon_k}(x^{k,\nu},x^{k,-\nu})\right\}\subset\partial_{x^\nu}y(x^{*,\nu},x^{*,-\nu}).
\end{align}

\begin{remark}
    Obviously, the relation \eqref{eq:inclusion.i} holds when $N=1$, i.e., the single-leader--multi-follower game.
\end{remark}

%

Herty et al. \cite{Herty2022} proposed a smoothing method for a special case of multi-leader--single-follower game in which the follower's response $y(x)$ can explicitly be obtained, while we consider the case where the response cannot be written explicitly in general.
Nevertheless, the gradient of the response function can be computed as indicated in~\eqref{eq:prop:response.derivative}.

By Proposition~\ref{prop:implicit.y.z.lmd}, reduced problem \eqref{prob:leader.i-th.reduced} may be approximated by a differentiable optimization problem:
\begin{align}\label{prob:approximated.reduced.leader.i}
    \min_{x^\nu\in X^\nu}\Theta^\nu_\varepsilon(x^\nu,x^{-\nu}):=\theta^\nu(x^\nu,x^{-\nu},y_\varepsilon(x^\nu,x^{-\nu})).
\end{align}
Now let {\NEPeps} be the game in which leader $\nu\in\{1,\dots,N\}$ solves \eqref{prob:approximated.reduced.leader.i}.
If {\NEPeps} has a Nash equilibrium, i.e., there exists $x^*$ such that
$$
    x^{*,\nu}\in\arg\min_{x^\nu\in X^\nu}\Theta^\nu_\varepsilon(x^\nu,x^{*,-\nu}),
$$
then the following assertion holds.
\begin{theorem}\label{thm:convergence.Nash}
    Let a sequence $\{x^k\}_{k\in\mathbb{N}}$ be such that each $x^k$ is a Nash equilibrium of {\NEPepsk}.
    If $\{x^k\}_{k\in\mathbb{N}}$ converges to $x^*$, then $x^*$ is a Nash equilibrium of {\NEP}.
    Moreover, $(x^*,y(x^*))$ is an LF Nash equilibrium of the MLMFG.
\end{theorem}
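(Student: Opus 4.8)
The plan is to carry out the standard ``a limit of equilibria is an equilibrium'' argument, the only nontrivial ingredient being the joint continuity of the smoothed response function near $\varepsilon=0$, which has already been established. First I would record that, by definition, $\Theta^\nu_{\varepsilon_k}(x^\nu,x^{-\nu})=\theta^\nu(x^\nu,x^{-\nu},y_{\varepsilon_k}(x^\nu,x^{-\nu}))$ and $\Theta^\nu(x^\nu,x^{-\nu})=\theta^\nu(x^\nu,x^{-\nu},y(x^\nu,x^{-\nu}))$, and that the local Lipschitz continuity of $w(\cdot,\cdot)$ from Proposition~\ref{prop:implicit.y.z.lmd} (together with the remark following it) gives $y_{\varepsilon_k}(\tilde x^k)\to y(\tilde x)$ whenever $\tilde x^k\to\tilde x$ in $X$ and $\varepsilon_k\downarrow0$. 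Combining this with the continuity of $\theta^\nu$ from \ref{asmp:leader1}, one obtains $\Theta^\nu_{\varepsilon_k}(\tilde x^k)\to\Theta^\nu(\tilde x)$ along any such sequence.

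With this in hand, fix $\nu\in\{1,\dots,N\}$ and an arbitrary competitor strategy $\hat x^\nu\in X^\nu$. Since $x^k$ is a Nash equilibrium of {\NEPepsk}, its $\nu$-th block minimizes $\Theta^\nu_{\varepsilon_k}(\cdot,x^{k,-\nu})$ over $X^\nu$, so in particular
$$
    \Theta^\nu_{\varepsilon_k}(x^{k,\nu},x^{k,-\nu})\le\Theta^\nu_{\varepsilon_k}(\hat x^\nu,x^{k,-\nu})\qquad\text{for all }k.
$$
Passing to the limit $k\to\infty$: since $x^k\to x^*$ and $\varepsilon_k\downarrow0$, the left-hand side converges to $\Theta^\nu(x^{*,\nu},x^{*,-\nu})$, while, because $(\hat x^\nu,x^{k,-\nu})\to(\hat x^\nu,x^{*,-\nu})$, the right-hand side converges to $\Theta^\nu(\hat x^\nu,x^{*,-\nu})$. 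Hence $\Theta^\nu(x^{*,\nu},x^{*,-\nu})\le\Theta^\nu(\hat x^\nu,x^{*,-\nu})$ for every $\hat x^\nu\in X^\nu$, i.e., $x^{*,\nu}\in\arg\min_{x^\nu\in X^\nu}\Theta^\nu(x^\nu,x^{*,-\nu})$. As $\nu$ was arbitrary, $x^*$ is a Nash equilibrium of {\NEP}, and the remaining claim that $(x^*,y(x^*))$ is an LF Nash equilibrium of the MLMFG then follows directly from Proposition~\ref{prop:equivalence.GNEP.MLF}.

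The argument is essentially routine, so there is no single hard obstacle; the point that needs care is that the two limiting processes $\varepsilon_k\downarrow0$ and $x^k\to x^*$ must be handled \emph{simultaneously}. Separate continuity of $x\mapsto y_\varepsilon(x)$ (for fixed $\varepsilon$) and of $\varepsilon\mapsto y_\varepsilon(x)$ (for fixed $x$) would not suffice; one genuinely uses the joint local Lipschitz continuity of $w$ in $(\varepsilon,x)$ supplied by Proposition~\ref{prop:implicit.y.z.lmd}. A secondary bookkeeping point is that the limit of the right-hand side is evaluated at the point $(\hat x^\nu,x^{*,-\nu})$, which differs from $x^*$, so one invokes that joint-continuity property there as well; compactness of $X^\nu$ from \ref{asmp:leader2} ensures all the points in play remain in the region where $w$, hence $y_{\varepsilon}$, is well defined.
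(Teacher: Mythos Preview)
Your argument is correct and is essentially the same continuity-based ``limit of equilibria is an equilibrium'' reasoning the paper invokes: the paper simply cites \cite[Theorem~4.5]{Hu2012} for the first assertion and Proposition~\ref{prop:equivalence.GNEP.MLF} for the second, whereas you spell out the details directly, correctly using the joint local Lipschitz continuity of $w(\varepsilon,x)$ from Proposition~\ref{prop:implicit.y.z.lmd} to pass to the limit on both sides of the Nash inequality.
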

\begin{proof}
    The former assertion can be shown using \cite[Theorem 4.5]{Hu2012}, and the latter one holds from Proposition~\ref{prop:equivalence.GNEP.MLF}.
\end{proof}

However, analogous to the claim for {\NEP} before, since \eqref{prob:approximated.reduced.leader.i} is also not convex in general, the Nash equilibrium of {\NEPeps} may not exist.
Hence, we introduce the following stationary equilibrium concept for the nonconvex NEP.
\begin{definition}\label{def:stationary.NE}
    A tuple of strategies $x^*\in X$ is referred to as a \emph{stationary Nash equilibrium} of {\NEPeps} if $x^{*,\nu}$ satisfies the following condition for all $\nu\in\{1,\dots,N\}$:
    \begin{align}\label{eq:stationary.NE}
        (\Theta^\nu_\varepsilon)'(x^{*,\nu},x^{*,-\nu};d^\nu)=\langle\nabla_{x^\nu}\Theta^\nu_\varepsilon(x^{*,\nu},x^{*,-\nu}),d^\nu\rangle\ge0\quad\forall d^\nu\in{\cal T}_{X^\nu}(x^{*,\nu}),
    \end{align}
    which is equivalent to
    $$
    \begin{aligned}
        \nabla_{x^\nu}&\theta^\nu(x^{*,\nu},x^{*,-\nu},y_\varepsilon(x^{*,\nu},x^{*,-\nu}))^\top d^\nu+\\
        &{d^\nu}^\top\nabla_{x^\nu}y_\varepsilon(x^{*,\nu},x^{*,-\nu})\nabla_y\theta^\nu(x^{*,\nu},x^{*,-\nu},y_\varepsilon(x^{*,\nu},x^{*,-\nu}))\ge0\quad\forall d^\nu\in{\cal T}_{X^\nu}(x^{*,\nu}).
    \end{aligned}
    $$
\end{definition}
Definition~\ref{def:stationary.NE} implies that for all $\nu$, $x^{*,\nu}\in X^\nu$ is a stationary point of reduced problem \eqref{prob:approximated.reduced.leader.i} for fixed $x^{*,-\nu}$.
Note that since $\Theta^\nu$ is differentiable, \eqref{eq:stationary.NE} is equivalent to
$$
    0\in\nabla_{x^\nu}\Theta^\nu_\varepsilon(x^{*,\nu},x^{*,-\nu})+{\cal T}_{X^\nu}(x^{*,\nu})^\circ,
$$
which means that the B-/C-stationary Nash equilibrium, introduced in Definitions~\ref{def:stationary.Nash} and~\ref{def:weak.stationary.Nash}, are equivalent under the differentiability of $\Theta^\nu_\varepsilon$.

In practice, we sequentially obtain a stationary Nash equilibrium of {\NEPeps} as $\varepsilon>0$ decreases to find an approximate B-/C-stationary Nash equilibrium for {\NEP}.

If $X^\nu$ is convex for all $\nu$, the equilibrium can be computed by solving the following variational inequality problem: Find $x^*\in X$ such that
\begin{align}\label{ieq:VI.stationary.Nash}
    \langle F^\ell_\varepsilon(x^*), x-x^*\rangle\ge0\qquad\forall x\in X,
\end{align}
where
$$
    F^\ell_\varepsilon(x):=
    \left[
    \begin{array}{c}
        \nabla_{x^1}\Theta^1_\varepsilon(x^1,x^{-1}) \\
        \vdots \\
        \nabla_{x^N}\Theta^N_\varepsilon(x^N,x^{-N}) 
    \end{array}
    \right].
$$
The following proposition guarantees that the solution to VI \eqref{ieq:VI.stationary.Nash} is the stationary Nash equilibrium for {\NEPeps}.
\begin{proposition}\label{prop:equivalence.VI}
    Suppose that $X^\nu\subset\Re^{n_\nu}$ is convex for all $\nu$.
    Let $x^*\in X$ be a solution to \eqref{ieq:VI.stationary.Nash}.
    Then, $x^*\in X$ is a stationary Nash equilibrium for {\NEPeps}.
\end{proposition}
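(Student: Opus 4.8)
The plan is to exploit the product structure $X=X^1\times\cdots\times X^N$ to decouple the variational inequality \eqref{ieq:VI.stationary.Nash} into $N$ per-leader variational inequalities, and then use the convexity of each $X^\nu$ to rewrite these in the directional-derivative form of Definition~\ref{def:stationary.NE}.

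First I would fix an arbitrary index $\nu\in\{1,\dots,N\}$ and, in \eqref{ieq:VI.stationary.Nash}, restrict attention to the test points of the form $x=(x^{*,1},\dots,x^{*,\nu-1},x^\nu,x^{*,\nu+1},\dots,x^{*,N})$ with $x^\nu\in X^\nu$ arbitrary; such $x$ belong to $X$ precisely because $X$ is a Cartesian product. For these test points the vector $x-x^*$ has only its $\nu$-th block nonzero, equal to $x^\nu-x^{*,\nu}$, so the inner product $\langle F^\ell_\varepsilon(x^*),x-x^*\rangle$ collapses to $\langle\nabla_{x^\nu}\Theta^\nu_\varepsilon(x^{*,\nu},x^{*,-\nu}),x^\nu-x^{*,\nu}\rangle$. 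Hence $x^{*,\nu}$ solves the variational inequality $\langle\nabla_{x^\nu}\Theta^\nu_\varepsilon(x^{*,\nu},x^{*,-\nu}),x^\nu-x^{*,\nu}\rangle\ge0$ for all $x^\nu\in X^\nu$.

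Next I would promote this feasible-point inequality to the tangent-cone inequality \eqref{eq:stationary.NE}. Since $X^\nu$ is convex, its tangent cone ${\cal T}_{X^\nu}(x^{*,\nu})$ is the closure of the cone of feasible directions, so every $d^\nu\in{\cal T}_{X^\nu}(x^{*,\nu})$ is a limit of vectors $\alpha_k(x^{\nu,k}-x^{*,\nu})$ with $x^{\nu,k}\in X^\nu$ and $\alpha_k\ge0$. Multiplying the per-leader inequality by $\alpha_k\ge0$ and passing to the limit gives $\langle\nabla_{x^\nu}\Theta^\nu_\varepsilon(x^{*,\nu},x^{*,-\nu}),d^\nu\rangle\ge0$ for every $d^\nu\in{\cal T}_{X^\nu}(x^{*,\nu})$. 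Finally, because $\varepsilon>0$, Proposition~\ref{prop:implicit.y.z.lmd} guarantees that $y_\varepsilon$ is continuously differentiable, so by \ref{asmp:leader1} and the chain rule $\Theta^\nu_\varepsilon$ is continuously differentiable and $(\Theta^\nu_\varepsilon)'(x^{*,\nu},x^{*,-\nu};d^\nu)=\langle\nabla_{x^\nu}\Theta^\nu_\varepsilon(x^{*,\nu},x^{*,-\nu}),d^\nu\rangle$; substituting this identity yields exactly \eqref{eq:stationary.NE}. Since $\nu$ was arbitrary, $x^*$ is a stationary Nash equilibrium of {\NEPeps}.

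The argument is essentially bookkeeping: the decoupling step and the chain-rule identification of the directional derivative with the gradient inner product are routine. The only point deserving care is the passage from the variational inequality over the feasible set $X^\nu$ to the inequality over the full tangent cone ${\cal T}_{X^\nu}(x^{*,\nu})$, which is where the convexity hypothesis on $X^\nu$ is genuinely used, via the characterization of the tangent cone of a convex set as the closure of the cone of feasible directions.
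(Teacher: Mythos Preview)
Your proof is correct and follows essentially the same approach as the paper: decouple the product VI into per-leader VIs by freezing all but the $\nu$-th block, then use convexity of $X^\nu$ to pass from the feasible-point inequality to the tangent-cone inequality \eqref{eq:stationary.NE}. The only cosmetic difference is that the paper carries out the second step via Lemma~\ref{lem:regular.sufficient} (the normal-cone characterization $-\nabla_{x^\nu}\Theta^\nu_\varepsilon\in{\cal N}_{X^\nu}(x^{*,\nu})={\cal T}_{X^\nu}(x^{*,\nu})^\circ$) rather than your direct limit argument on feasible directions, but these are equivalent standard manipulations.
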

\begin{proof}
    Let $(x^\nu,x^{*,-\nu})=(x^{*,1},\dots,x^{*,\nu-1},x^\nu,x^{*,\nu+1},\dots,x^{*,N})$, where $x^\nu\in X^\nu$ is arbitrary, \eqref{ieq:VI.stationary.Nash} is reduced to
    \begin{align}\label{ieq:stationary.Nash.i}
        \langle\nabla_{x^\nu}\Theta^\nu_\varepsilon(x^{*,\nu},x^{*,-\nu}),x^\nu-x^{*,\nu}\rangle\ge0\quad\forall x^\nu\in X^\nu.
    \end{align}
    By Lemma~\ref{lem:regular.sufficient}, \eqref{ieq:stationary.Nash.i} is equivalent to
    $$
    \begin{aligned}
        &0\in\nabla_{x^\nu}\Theta^\nu(x^{*,\nu},x^{*,-\nu};\varepsilon)+{\cal N}_{X^\nu}(x^{*,\nu})\\
        \iff&0\in\nabla_{x^\nu}\Theta^\nu(x^{*,\nu},x^{*,-\nu};\varepsilon)+{\cal T}_{X^\nu}(x^{*,\nu})^\circ,
    \end{aligned}
    $$
    which coincides with \eqref{eq:stationary.NE}.
    The claim holds for all $\nu$, and thus $x^*\in X$ is a stationary Nash equilibrium for {\NEPeps}.
\end{proof}

\begin{proposition}\label{prop:existence.stationary.Nash}
    Suppose that Assumptions~\ref{asmp:LF} and~\ref{asmp:followersVI} hold, and $X^\nu\subset\Re^{n_\nu}$ is convex for all $\nu$.
    For each $\varepsilon>0$, there exists a stationary Nash equilibrium to {\NEPeps}.
\end{proposition}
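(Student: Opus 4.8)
The plan is to reduce the claim to the solvability of the variational inequality \eqref{ieq:VI.stationary.Nash} and then invoke a classical existence theorem for VIs over a compact convex set. First I would observe that, by \ref{asmp:leader2} in Assumption~\ref{asmp:LF} together with the convexity hypothesis, each strategy set $X^\nu$ is nonempty, compact, and convex, and hence so is the product $X=X^1\times\dots\times X^N$. By Proposition~\ref{prop:equivalence.VI}, every solution of \eqref{ieq:VI.stationary.Nash} is a stationary Nash equilibrium of \NEPeps, so it suffices to exhibit, for each fixed $\varepsilon>0$, a solution $x^*\in X$ of \eqref{ieq:VI.stationary.Nash}.

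The core step is to establish the continuity of the mapping $F^\ell_\varepsilon$ on $X$. Fix $\varepsilon>0$. By Proposition~\ref{prop:KKT.solution}, the system $H_\varepsilon(x,w)=0$ has, for every $x\in X$, a unique solution $w_\varepsilon(x)=(y_\varepsilon(x),z_\varepsilon(x),\lambda_\varepsilon(x))$, and by Proposition~\ref{prop:nonsingularity} (or Lemma~\ref{lem:nonsingularity.positive}) the Jacobian $\nabla_w H_\varepsilon(x,w_\varepsilon(x))$ is nonsingular there. The implicit function theorem therefore applies at every point of $X$; combined with the uniqueness in Proposition~\ref{prop:KKT.solution}, the local $C^1$ branches it produces agree on overlaps, so $w_\varepsilon(\cdot)$ is well defined and continuously differentiable on an open neighborhood of the compact set $X$, with $\nabla_x w_\varepsilon$ given by \eqref{eq:prop:response.derivative}; cf.\ the last assertion of Proposition~\ref{prop:implicit.y.z.lmd}. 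In particular $y_\varepsilon$ and $\nabla_{x^\nu}y_\varepsilon$ are continuous on $X$. Since $\theta^\nu$ is continuously differentiable by \ref{asmp:leader1}, the chain rule gives
\[
    \nabla_{x^\nu}\Theta^\nu_\varepsilon(x)=\nabla_{x^\nu}\theta^\nu(x,y_\varepsilon(x))+\nabla_{x^\nu}y_\varepsilon(x)\,\nabla_y\theta^\nu(x,y_\varepsilon(x)),
\]
which depends continuously on $x$; stacking these components over $\nu$ shows that $F^\ell_\varepsilon$ is continuous on $X$.

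Finally, $F^\ell_\varepsilon$ is a continuous mapping on the nonempty, compact, convex set $X$, so the variational inequality \eqref{ieq:VI.stationary.Nash} admits at least one solution $x^*\in X$ by the classical existence result for VIs over compact convex sets, e.g.\ \cite[Corollary 2.2.5]{Facchinei2003} (which follows from Brouwer's fixed point theorem applied to the map $x\mapsto\proj_X(x-F^\ell_\varepsilon(x))$). By Proposition~\ref{prop:equivalence.VI}, this $x^*$ is a stationary Nash equilibrium of \NEPeps, as claimed.

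The only genuinely delicate point is the global $C^1$ regularity of $y_\varepsilon$ on $X$: Proposition~\ref{prop:implicit.y.z.lmd} only asserts continuous differentiability on a neighborhood $\Omega$ of a given point, so I must argue that the local branches patch into a single smooth map on a neighborhood of the compact set $X$ — this is exactly where the everywhere-nonsingularity (Proposition~\ref{prop:nonsingularity}) and the global uniqueness of the zero of $H_\varepsilon(x,\cdot)$ (Proposition~\ref{prop:KKT.solution}) are used. Everything else is routine; note in particular that no convexity or monotonicity of the $\Theta^\nu_\varepsilon$ is invoked, which is precisely why the conclusion is only a stationary (first-order) equilibrium rather than a genuine Nash equilibrium.
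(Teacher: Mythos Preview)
Your proof is correct and follows essentially the same route as the paper: reduce to the VI~\eqref{ieq:VI.stationary.Nash} via Proposition~\ref{prop:equivalence.VI}, note that $F^\ell_\varepsilon$ is continuous, and apply \cite[Corollary~2.2.5]{Facchinei2003} on the nonempty compact convex set $X$. The paper's own argument is only two sentences and simply asserts the continuity of $F^\ell_\varepsilon$; your added justification of this point via Propositions~\ref{prop:KKT.solution}, \ref{prop:nonsingularity}, and~\ref{prop:implicit.y.z.lmd} is a welcome elaboration rather than a different approach.
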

\begin{proof}
    It suffices to show that there exists a solution to variational inequality \eqref{ieq:VI.stationary.Nash} by the convexity of $X$.
    Since $F^\ell_\varepsilon$ is continuous, the solution set of \eqref{ieq:VI.stationary.Nash} is nonempty and compact by \cite[Corollary 2.2.5]{Facchinei2003}.
\end{proof}

\subsection{Convergence to stationary Nash equilibrium}


%

In this subsection we show that the sequence of the stationary Nash equilibrium of {\NEPepsk} converges to the B-/C-stationary Nash equilibrium of {\NEP} for appropriately chosen $\{\varepsilon_k\}$.

\begin{theorem}\label{thm:stationarity}
    Suppose that Assumptions~\ref{asmp:LF}, \ref{asmp:followersVI}, and \eqref{eq:inclusion.i} hold for all $\nu\in\{1,\dots,N\}$.
    Assume that $X^\nu$ is regular for all $\nu$.
    Let $\{x^k\}$ be a sequence of stationary Nash equilibria of {\NEPepsk}, i.e., $x^{k,\nu}$ satisfies \eqref{eq:stationary.NE} for all $\nu$.
    Then every accumulation point $x^*$ of the sequence $\{x^k\}$ is a C-stationary Nash equilibrium of {\NEP}.

    Moreover, assume that the reduced cost function $\Theta^\nu(x^\nu,x^{-\nu}):=\theta^\nu(x^\nu,x^{-\nu},y(x^\nu,x^{-\nu}))$ is regular with respect to $x^\nu$ at $x^*$ for all $\nu$.
    Then, $x^*$ is a B-stationary Nash equilibrium of {\NEP}.
\end{theorem}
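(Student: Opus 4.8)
The plan is to pass to the limit in the stationarity inequality \eqref{eq:stationary.NE} defining each $x^{k,\nu}$, using the gradient formula \eqref{eq:prop:response.derivative} together with the inclusion \eqref{eq:inclusion.i}. First I would fix $\nu$ and an accumulation point $x^*$ of $\{x^k\}$; passing to a subsequence (not relabeled) we may assume $x^k\to x^*$ and, by the compactness of the partial generalized Jacobians $\partial_{x^\nu}w_\varepsilon$ from Proposition~\ref{prop:boundedness.grad}, that $\nabla_{x^\nu}y_{\varepsilon_k}(x^{k,\nu},x^{k,-\nu})$ converges to some matrix, which by \eqref{eq:inclusion.i} lies in $\partial_{x^\nu}y(x^{*,\nu},x^{*,-\nu})$. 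Since each $x^k$ is a stationary Nash equilibrium of {\NEPepsk}, we have
$$
    \nabla_{x^\nu}\theta^\nu(x^{k},y_{\varepsilon_k}(x^k))^\top d^\nu + {d^\nu}^\top\nabla_{x^\nu}y_{\varepsilon_k}(x^k)\,\nabla_y\theta^\nu(x^{k},y_{\varepsilon_k}(x^k)) \ge 0
$$
for all $d^\nu\in{\cal T}_{X^\nu}(x^{k,\nu})$. The second ingredient is the outer semicontinuity of the tangent-cone/normal-cone map on the regular set $X^\nu$: one rewrites \eqref{eq:stationary.NE} in the equivalent form $0\in\nabla_{x^\nu}\Theta^\nu_{\varepsilon_k}(x^k)+{\cal T}_{X^\nu}(x^{k,\nu})^\circ = \nabla_{x^\nu}\Theta^\nu_{\varepsilon_k}(x^k)+{\cal N}_{X^\nu}(x^{k,\nu})$ (using regularity), so $-\nabla_{x^\nu}\Theta^\nu_{\varepsilon_k}(x^k)\in{\cal N}_{X^\nu}(x^{k,\nu})$.

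Next I would take the limit. By continuity of $\nabla_{x^\nu}\theta^\nu$ and $\nabla_y\theta^\nu$ and the convergence $y_{\varepsilon_k}(x^k)\to y(x^*)$ (the remark following Proposition~\ref{prop:implicit.y.z.lmd}), the gradient $\nabla_{x^\nu}\Theta^\nu_{\varepsilon_k}(x^k)=\nabla_{x^\nu}\theta^\nu(x^k,y_{\varepsilon_k}(x^k))+\nabla_{x^\nu}y_{\varepsilon_k}(x^k)\nabla_y\theta^\nu(x^k,y_{\varepsilon_k}(x^k))$ converges to a vector of the form $\nabla_{x^\nu}\theta^\nu(x^*,y(x^*))+V\nabla_y\theta^\nu(x^*,y(x^*))$ with $V\in\partial_{x^\nu}y(x^*)$. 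By the chain rule for generalized gradients of the composition $\Theta^\nu=\theta^\nu(\cdot,\cdot,y(\cdot))$ (Clarke's theorem, using that $\theta^\nu$ is continuously differentiable so the composition's generalized gradient is contained in $\nabla_{x^\nu}\theta^\nu + (\partial_{x^\nu}y)^\top\nabla_y\theta^\nu$), this limit lies in $\partial_{x^\nu}\Theta^\nu(x^*)$. Combining with the fact that ${\cal N}_{X^\nu}$ is closed (outer semicontinuous), $-\lim\nabla_{x^\nu}\Theta^\nu_{\varepsilon_k}(x^k)\in{\cal N}_{X^\nu}(x^{*,\nu})={\cal T}_{X^\nu}(x^{*,\nu})^\circ$ by regularity, so $0\in\partial_{x^\nu}\Theta^\nu(x^*)+{\cal T}_{X^\nu}(x^{*,\nu})^\circ$, which is exactly the C-stationary Nash condition of Definition~\ref{def:weak.stationary.Nash}. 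Doing this for every $\nu$ proves the first claim.

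For the second claim, under the additional hypothesis that $\Theta^\nu(\cdot,x^{*,-\nu})$ is regular at $x^{*,\nu}$, regularity means $(\Theta^\nu)'(x^{*,\nu},x^{*,-\nu};d^\nu)=(\Theta^\nu)^\circ(x^{*,\nu},x^{*,-\nu};d^\nu)=\max\{\langle g,d^\nu\rangle : g\in\partial_{x^\nu}\Theta^\nu(x^*)\}$ for every $d^\nu$. From $0\in\partial_{x^\nu}\Theta^\nu(x^*)+{\cal T}_{X^\nu}(x^{*,\nu})^\circ$ there is $g^*\in\partial_{x^\nu}\Theta^\nu(x^*)$ with $-g^*\in{\cal T}_{X^\nu}(x^{*,\nu})^\circ$, i.e. $\langle g^*,d^\nu\rangle\ge0$ for all $d^\nu\in{\cal T}_{X^\nu}(x^{*,\nu})$; hence $(\Theta^\nu)'(x^{*,\nu},x^{*,-\nu};d^\nu)\ge\langle g^*,d^\nu\rangle\ge0$ for all such $d^\nu$, which is the B-stationary inequality \eqref{ieq:def.B-stationary} (noting that the displayed expansion of $(\Theta^\nu)'$ there is just the directional-derivative form of the chain rule, valid since $\theta^\nu$ is $C^1$ and $y$ is directionally differentiable along $d^\nu$).

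\textbf{Main obstacle.} The delicate point is the interchange of limits: justifying that the limit of the smoothed gradients $\nabla_{x^\nu}\Theta^\nu_{\varepsilon_k}(x^k)$ actually lands inside $\partial_{x^\nu}\Theta^\nu(x^*)$ rather than merely in $\nabla_{x^\nu}\theta^\nu+(\partial_{x^\nu}y)^\top\nabla_y\theta^\nu$, and more fundamentally that assumption \eqref{eq:inclusion.i} can be invoked here — it is exactly the place where the $N>1$ case is not covered by Corollary~\ref{cor:y.inclusion} and must be taken as a hypothesis. Handling the tangent/normal cone limit cleanly also requires the regularity of $X^\nu$ precisely so that ${\cal N}_{X^\nu}={\cal T}_{X^\nu}^\circ$ and so that the set-valued limit of polar cones behaves correctly; without regularity one only gets a statement about ${\cal T}^\circ$ limits that need not equal ${\cal N}$ at $x^*$.
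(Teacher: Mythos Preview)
Your proposal is correct and mirrors the paper's proof almost step for step: rewrite \eqref{eq:stationary.NE} as $0\in\nabla_{x^\nu}\Theta^\nu_{\varepsilon_k}(x^k)+{\cal N}_{X^\nu}(x^{k,\nu})$, pass to the limit using continuity of $\nabla\theta^\nu$, $y_{\varepsilon_k}\to y$, assumption \eqref{eq:inclusion.i}, and outer semicontinuity of ${\cal N}_{X^\nu}$ (the paper cites \cite[Proposition~6.6]{Rockafellar1998}), then apply Clarke's chain rule and, for the second part, regularity of $\Theta^\nu$ via $(\Theta^\nu)'=\max\{\langle\zeta,\cdot\rangle:\zeta\in\partial_{x^\nu}\Theta^\nu\}$. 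Your flagged ``main obstacle'' is in fact not an obstacle: because $\theta^\nu$ is continuously (hence strictly) differentiable, Clarke's Jacobian chain rule \cite[Theorem~2.6.6]{Clarke1990} gives the \emph{equality} $\partial_{x^\nu}\Theta^\nu(x^*)=\nabla_{x^\nu}\theta^\nu(x^*,y(x^*))+\partial_{x^\nu}y(x^*)\,\nabla_y\theta^\nu(x^*,y(x^*))$, which is precisely what the paper invokes, so the limit vector lands in $\partial_{x^\nu}\Theta^\nu(x^*)$ as required.
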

\begin{proof}
    Since $x^{k,\nu}$ is a stationary Nash equilibrium of {\NEPepsk}, for leader $\nu$ $x^{k,\nu}\in X^\nu$, under the regularity of $X^\nu$, satisfies
    \begin{align*}
        &0\in\nabla_{x^\nu}\Theta^\nu_{\varepsilon_k}(x^{k,\nu},x^{k,-\nu})+{\cal N}_{X^\nu}(x^{k,\nu})\nonumber\\
        \iff&0\in\nabla_{x^\nu}\theta^\nu(x^{k,\nu},x^{k,-\nu},y_{\varepsilon_k}(x^{k,\nu},x^{k,-\nu}))+\nonumber\\
        &\qquad\nabla_{x^\nu}y_{\varepsilon_k}(x^{k,\nu},x^{k,-\nu})\nabla_y\theta^\nu(x^{k,\nu},x^{k,-\nu},y_{\varepsilon_k}(x^{k,\nu},x^{k,-\nu}))+{\cal N}_{X^\nu}(x^{k,\nu})
    \end{align*}
    By the compactness of $X$, we can assume that $x^*$ is an accumulation point of $\{x^k\}$ without loss of generality.
    The continuity of $\nabla_{x^\nu}\theta^\nu$, $\nabla_y\theta^\nu$, and $y(\cdot)$ along with Proposition~\ref{prop:implicit.y.z.lmd} implies that
    $$
        \begin{aligned}
            \lim_{k\to\infty}\nabla_{x^\nu}\theta^\nu(x^{k,\nu},x^{k,-\nu},y_{\varepsilon_k}(x^{k,\nu},x^{k,-\nu}))=\nabla_{x^\nu}\theta^\nu(x^{*,\nu},x^{*,-\nu},y(x^{*,\nu},x^{*,-\nu})),\\
            \lim_{k\to\infty}\nabla_{y}\theta^\nu(x^{k,\nu},x^{k,-\nu},y_{\varepsilon_k}(x^{k,\nu},x^{k,-\nu}))=\nabla_{y}\theta^\nu(x^{*,\nu},x^{*,-\nu},y(x^{*,\nu},x^{*,-\nu})).
        \end{aligned}
    $$
    From the assumption of \eqref{eq:inclusion.i}, there exists a matrix $V^{*,\nu}\in\Re^{n_\nu\times m}$ such that
    \begin{align}\label{eq:matrix.subdifferential}
        V^{*,\nu}=\lim_{k\to\infty}\nabla_{x^\nu}y_{\varepsilon_k}(x^{k,\nu},x^{k,-\nu})\in\partial_{x^\nu}y(x^{*,\nu},x^{*,-\nu}).
    \end{align}
    Since $x^k\to x^*$, $-\nabla_{x^\nu}\Theta^\nu_{\varepsilon_k}(x^{k,\nu},x^{k,-\nu})\in{\cal N}_{X^\nu}(x^{k,\nu})$, and $\nabla_{x^\nu}\Theta^\nu_{\varepsilon_k}(x^{k,\nu},x^{k,-\nu})\to\nabla_{x^\nu}\Theta^\nu(x^{*,\nu},x^{*,-\nu})$ from the observation above, by \cite[Proposition~6.6]{Rockafellar1998}, we have $-\nabla_{x^\nu}\Theta^\nu(x^{*,\nu},x^{*,-\nu})\in{\cal N}_{X^\nu}(x^{*,\nu})$, which implies that
    \begin{align}
        0\in&\nabla_{x^\nu}\theta^\nu(x^{*,\nu},x^{*,-\nu},y(x^{*,\nu},x^{*,-\nu}))\nonumber\\
        &\qquad+V^{*,\nu}\nabla_y\theta^\nu(x^{*,\nu},x^{*,-\nu},y(x^{*,\nu},x^{*,-\nu}))+{\cal N}_{X^\nu}(x^{*,\nu}).\label{ge:limit.Nash}
    \end{align}
    Since $\theta^\nu$ is strictly differentiable with respect to $y$, the Jacobian chain rule \cite[Theorem 2.6.6]{Clarke1990} can be applied and then yields
    $$
        \begin{aligned}
        \partial_{x^\nu}\Theta^\nu(x^{*,\nu},x^{*,-\nu})=\nabla_{x^\nu}&\theta^\nu(x^{*,\nu},x^{*,-\nu},y(x^{*,\nu},x^{*,-\nu}))\\
        &+\partial_{x^\nu}y(x^{*,\nu},x^{*,-\nu})\nabla_y\theta^\nu(x^{*,\nu},x^{*,-\nu},y(x^{*,\nu},x^{*,-\nu})).
        \end{aligned}
    $$
    Then \eqref{ge:limit.Nash} implies $0\in\partial_{x^\nu}\Theta^\nu(x^{*,\nu},x^{*,-\nu})+{\cal N}_{X^\nu}(x^{*,\nu})$.
    The claim simultaneously holds for all $\nu$; thus, $x^*$ is a C-stationary Nash equilibrium of {\NEP}.
    
    Now we show the convergence to B-stationary Nash equilibrium by assuming the regularity of $\Theta^\nu$.
    Since $X^\nu$ is regular for all $\nu$, $X$ is also regular.
    By \cite[Proposition 4.6.3]{Bertsekas2003}, \eqref{ge:limit.Nash} is equivalent to
    $$
    \begin{aligned}
        \nabla_{x^\nu}&\theta^\nu(x^{*,\nu},x^{*,-\nu},y(x^{*,\nu},x^{*,-\nu}))^\top d^\nu+\\
        &\qquad{d^\nu}^\top V^{*,\nu}\nabla_y\theta^\nu(x^{*,\nu},x^{*,-\nu},y(x^{*,\nu},x^{*,-\nu}))\ge0\quad\forall d^\nu\in{\cal T}_{X^\nu}(x^{*,\nu}).
    \end{aligned}
    $$
    The regularity assumption of $\Theta^\nu$ and \cite[Proposition 2.1.2 (b)]{Clarke1990} lead that
    $$
    \begin{aligned}
        (\Theta^\nu)'&(x^{*,\nu},x^{*,-\nu};d^\nu)=\max\{{\zeta^\nu}^\top d^\nu\mid\zeta^\nu\in\partial_{x^\nu}\Theta^\nu(x^{*,\nu},x^{*,-\nu})\}\ge\\
        &\nabla_{x^\nu}\theta^\nu(x^{*,\nu},x^{*,-\nu},y(x^{*,\nu},x^{*,-\nu}))^\top d^\nu\\
        &\quad\qquad+{d^\nu}^\top V^{*,\nu}\nabla_y\theta^\nu(x^{*,\nu},x^{*,-\nu},y(x^{*,\nu},x^{*,-\nu}))\ge0\quad\forall d^\nu\in{\cal T}_{X^\nu}(x^{*,\nu}).
    \end{aligned}
    $$
    The above assertion holds for every $\nu$, which implies that $x^*$ is a B-stationary Nash equilibrium point of {\NEP}.
\end{proof}

\begin{remark}
    Hori and Fukushima \cite{Hori2019} showed the convergence to B-stationary Nash equilibrium with a squared penalty method for an EPEC associated with MLMFG, which means that the case when the penalty parameter $\rho_k\to\infty$.
    As is well known, the squared penalty method is easily failed to be ill-conditioned.
    Then they proposed a refinement procedure after obtaining an `approximated' B-stationary Nash equilibrium with the penalty method.
    However, the accumulation point obtained with the refinement may not guarantee the B-stationarity but only the weak stationarity because both problems are intrinsically different from each other.
    Meanwhile, our method enables us to obtain the B-stationary Nash equilibrium accurately.
\end{remark}

If $X^\nu\subset\Re^{n_\nu}$ is given by
\begin{align}\label{X.ieq.eq.system}
    X^\nu:=\{x^\nu\in\Re^{n_\nu}\mid u^\nu(x^\nu)\le0,\ v^\nu(x^\nu)=0\},
\end{align}
where $u^\nu\colon\Re^{n_\nu}\to\Re^{p_\nu}$ and $v^\nu\colon\Re^{n_\nu}\to\Re^{q_\nu}$ are continuously differentiable, the convergence result of Theorem~\ref{thm:stationarity} can be shown without regularity assumption of $X^\nu$ under an appropriate constraint qualification.

\begin{definition}\label{def:MFCQ}
    For each $\nu\in\{1,\dots,N\}$, we say that the \emph{Mangasarian--Fromovitz constraint qualification (MFCQ)} holds at $\bar{x}^\nu\in X^\nu$ if $\nabla v^\nu_j(\bar{x}^\nu)$, $j=1,\dots,q_\nu$, are linearly independent, and there exists $d^\nu\in\Re^{n_\nu}$ such that $\langle\nabla u^\nu_i(\bar{x}^\nu),d^\nu\rangle<0$ for all $i\in{\cal I}(\bar{x}^\nu):=\{i\mid u^\nu_i(\bar{x}^\nu)=0\}$ and $\langle\nabla v^\nu_j(\bar{x}^\nu),d\rangle=0$ for all $j=1,\dots,q_\nu$.
\end{definition}

\begin{corollary}\label{thm:conv.nonconvex}
    Suppose that Assumptions~\ref{asmp:LF} and~\ref{asmp:followersVI} hold, and assume that \eqref{eq:inclusion.i} holds for all $\nu\in\{1,\dots,N\}$.
    Suppose also that the MFCQ holds for all $\nu$ at every accumulation point $x^*$ of the sequence $\{x^k\}$.
    Then $x^*$ is a C-stationary Nash equilibrium for {\NEP}.

    Moreover, if $\Theta^\nu$ is regular with respect to $x^\nu$ at $x^*$ for all $\nu$, then $x^*$ is a B-stationary Nash equilibrium for {\NEP}.
\end{corollary}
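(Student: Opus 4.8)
The plan is to mirror the proof of Theorem~\ref{thm:stationarity}, but with the role played there by the regularity of $X^\nu$ now supplied by a Karush--Kuhn--Tucker (KKT) argument based on the MFCQ. First I would use that the MFCQ is stable under small perturbations: since it holds at $x^{*,\nu}$ and $x^{k,\nu}\to x^{*,\nu}$, it also holds at $x^{k,\nu}$ for all sufficiently large $k$, and at such points ${\cal T}_{X^\nu}(x^{k,\nu})$ equals the linearized cone $\{d^\nu\mid\langle\nabla u^\nu_i(x^{k,\nu}),d^\nu\rangle\le0,\ i\in{\cal I}(x^{k,\nu}),\ \langle\nabla v^\nu_j(x^{k,\nu}),d^\nu\rangle=0\}$. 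Consequently, the stationarity condition~\eqref{eq:stationary.NE} for {\NEPepsk}, which reads $\langle\nabla_{x^\nu}\Theta^\nu_{\varepsilon_k}(x^{k,\nu},x^{k,-\nu}),d^\nu\rangle\ge0$ for all $d^\nu\in{\cal T}_{X^\nu}(x^{k,\nu})$, is equivalent by Farkas' lemma to the existence of multipliers $\mu^{k,\nu}\ge0$, $\lambda^{k,\nu}$ with
\begin{align*}
    \nabla_{x^\nu}\Theta^\nu_{\varepsilon_k}(x^{k,\nu},x^{k,-\nu})+\sum_{i=1}^{p_\nu}\mu^{k,\nu}_i\nabla u^\nu_i(x^{k,\nu})+\sum_{j=1}^{q_\nu}\lambda^{k,\nu}_j\nabla v^\nu_j(x^{k,\nu})=0,\quad\mu^{k,\nu}_i u^\nu_i(x^{k,\nu})=0.
\end{align*}

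Next I would pass to the limit along a suitable subsequence. Exactly as in the proof of Theorem~\ref{thm:stationarity}, the continuity of $\nabla_{x^\nu}\theta^\nu$, $\nabla_y\theta^\nu$, and $y(\cdot)$ (Proposition~\ref{prop:implicit.y.z.lmd}), together with assumption~\eqref{eq:inclusion.i} and the Jacobian chain rule, gives $\nabla_{x^\nu}\Theta^\nu_{\varepsilon_k}(x^{k,\nu},x^{k,-\nu})\to\zeta^{*,\nu}$ for some $\zeta^{*,\nu}\in\partial_{x^\nu}\Theta^\nu(x^{*,\nu},x^{*,-\nu})$; in particular this sequence is bounded. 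The crucial new step is to show that $\{(\mu^{k,\nu},\lambda^{k,\nu})\}$ is bounded: if not, normalize, extract a convergent subsequence of the normalized multipliers, kill the components attached to inactive inequality constraints using complementarity and continuity, and divide the displayed KKT equation by the multiplier norm; letting $k\to\infty$ produces a nontrivial combination $\sum_{i\in{\cal I}(x^{*,\nu})}\bar\mu^\nu_i\nabla u^\nu_i(x^{*,\nu})+\sum_j\bar\lambda^\nu_j\nabla v^\nu_j(x^{*,\nu})=0$ with $\bar\mu^\nu\ge0$, contradicting the MFCQ at $x^{*,\nu}$. Hence the multipliers are bounded; passing to a further subsequence $(\mu^{k,\nu},\lambda^{k,\nu})\to(\mu^{*,\nu},\lambda^{*,\nu})$ with $\mu^{*,\nu}\ge0$ and $\mu^{*,\nu}_i=0$ for $i\notin{\cal I}(x^{*,\nu})$, so $-\zeta^{*,\nu}$ lies in the cone generated by $\{\nabla u^\nu_i(x^{*,\nu})\}_{i\in{\cal I}(x^{*,\nu})}$ and $\{\pm\nabla v^\nu_j(x^{*,\nu})\}$, which under the MFCQ equals ${\cal T}_{X^\nu}(x^{*,\nu})^\circ$. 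Therefore $0\in\partial_{x^\nu}\Theta^\nu(x^{*,\nu},x^{*,-\nu})+{\cal T}_{X^\nu}(x^{*,\nu})^\circ$; since this holds for every $\nu$, $x^*$ is a C-stationary Nash equilibrium of {\NEP}.

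For the B-stationarity assertion I would reuse the closing argument of the proof of Theorem~\ref{thm:stationarity}: if $\Theta^\nu$ is regular at $x^*$, then $(\Theta^\nu)'(x^{*,\nu},x^{*,-\nu};d^\nu)=\max\{\langle\zeta,d^\nu\rangle\mid\zeta\in\partial_{x^\nu}\Theta^\nu(x^{*,\nu},x^{*,-\nu})\}\ge\langle\zeta^{*,\nu},d^\nu\rangle$, and since $-\zeta^{*,\nu}\in{\cal T}_{X^\nu}(x^{*,\nu})^\circ$ the right-hand side is nonnegative for all $d^\nu\in{\cal T}_{X^\nu}(x^{*,\nu})$; doing this for each $\nu$ shows $x^*$ is a B-stationary Nash equilibrium of {\NEP}.

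The step I expect to be the main obstacle is the boundedness of the KKT multipliers, since everything else is essentially inherited from Theorem~\ref{thm:stationarity}; some care is also needed in the bookkeeping of the nested subsequences (for the limit $V^{*,\nu}$ from~\eqref{eq:inclusion.i}, for the normalized multipliers, and for the multipliers themselves), but this is harmless because the conclusion depends only on the fixed accumulation point $x^*$. A shorter route, which I would mention as a remark, is to observe that the MFCQ at a point forces the set $\{u^\nu\le0,\ v^\nu=0\}$ to be regular there in the sense of Definition~\ref{def:regular.set} (its limiting normal cone collapses to the linearized normal cone, which is the polar of the tangent cone); since the MFCQ, being an open condition, then also holds at $x^{k,\nu}$ for large $k$, $X^\nu$ is regular at every point used in the proof of Theorem~\ref{thm:stationarity}, and that proof goes through unchanged.
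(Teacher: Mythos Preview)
Your proposal is correct and follows essentially the same route as the paper: write KKT conditions for the smoothed problem at $x^{k,\nu}$, use the MFCQ to control the multipliers, pass to the limit, and invoke the chain rule and the argument of Theorem~\ref{thm:stationarity} for the B-stationarity part. The paper's proof is terser---it simply asserts that the KKT multipliers exist and that the KKT sequence is bounded ``by the MFCQ''---whereas you spell out the stability of MFCQ under perturbation (to justify the existence of multipliers at $x^{k,\nu}$ for large $k$) and the standard normalization-by-contradiction argument for boundedness; these are exactly the details hidden behind the paper's one-line appeals, so there is no genuine methodological difference. Your closing remark that MFCQ forces regularity of $X^\nu$ at the relevant points, allowing one to invoke Theorem~\ref{thm:stationarity} directly, is a valid shortcut that the paper does not take.
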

\begin{proof}
    Under the MFCQ assumption, there exists Lagrange multipliers $\zeta^{\le,k,\nu}\in\Re^{p_\nu}_+$ and $\zeta^{=,k,\nu}\in\Re^{q_\nu}$ satisfying the following optimality condition of \eqref{prob:approximated.reduced.leader.i}:
    \begin{align}\label{eq:KKT.nu.eps}
        \begin{aligned}
        \nabla_{x^\nu}\Theta^\nu_{\varepsilon_k}(x^{k,\nu},x^{k,-\nu})+\nabla u^\nu(x^{k,\nu})\zeta^{\le,k,\nu}+\nabla v^\nu(x^{k,\nu})\zeta^{=,k,\nu}=0,\\
        0\le\zeta^{\le,\nu}\perp -u^\nu(x^{k,\nu})\ge 0,\\
        v^\nu(x^{k,\nu})=0.
        \end{aligned}
    \end{align}
    By the continuity of $\nabla_{x^\nu}\theta^\nu$, $\nabla_y\theta^\nu$, and $y$ along with Proposition~\ref{prop:implicit.y.z.lmd}, there exists a limit $(x^*,\zeta^*)$ of KKT conditions \eqref{eq:KKT.nu.eps} because the sequence $\{(x^k,\zeta^k)\}$ of KKT tuple is bounded by the MFCQ.
    Then we obtain 
    \begin{gather}
        \nabla_{x^\nu}\theta^\nu(x^{*,\nu},x^{*,-\nu},y(x^{*,\nu},x^{*,-\nu}))+V^{*,\nu}\nabla_y\theta^\nu(x^{*,\nu},x^{*,-\nu},y(x^{*,\nu},x^{*,-\nu}))\nonumber\\
        \qquad+\nabla u^\nu(x^{*,\nu})\zeta^{\le,*,\nu}+\nabla v^\nu(x^{*,\nu})\zeta^{=,*,\nu}=0,\label{eq:KKT.limit.MFCQ1}\\
        0\le\zeta^{\le,*,\nu}\perp -u^\nu(x^{*,\nu})\ge 0,\label{eq:KKT.limit.MFCQ2}\\
        v^\nu(x^{*,\nu})=0,\label{eq:KKT.limit.MFCQ3}
    \end{gather}
    where $V^{*,\nu}$ is defined in~\eqref{eq:matrix.subdifferential}.
    It follows from the Jacobian chain rule that \eqref{eq:KKT.limit.MFCQ1} is
    \begin{align}\label{eq:KKT.limit.MFCQ1.inclusion}
        0\in&\nabla_{x^\nu}\theta^\nu(x^{*,\nu},x^{*,-\nu},y(x^{*,\nu},x^{*,-\nu}))\nonumber\\
        &\quad+\partial_{x^\nu}y(x^{*,\nu},x^{*,-\nu})\nabla_y\theta^\nu(x^{*,\nu},x^{*,-\nu},y(x^{*,\nu},x^{*,-\nu}))\nonumber\\
        &\quad+\nabla u^\nu(x^{*,\nu})\zeta^{\le,*,\nu}+\nabla v^\nu(x^{*,\nu})\zeta^{=,*,\nu}.
    \end{align}
    Under the MFCQ, KKT conditions \eqref{eq:KKT.limit.MFCQ1.inclusion}, \eqref{eq:KKT.limit.MFCQ2}, and \eqref{eq:KKT.limit.MFCQ3} coincide with $0\in\partial_{x^\nu}\Theta^\nu(x^{*,\nu},x^{*,-\nu})+{\cal T}_{X^\nu}(x^{*,\nu})^\circ$, and thus $x^*$ is a C-stationary Nash equilibrium of {\NEP}.

    The latter claim can also be shown by the same manner as Theorem~\ref{thm:stationarity}.
\end{proof}

\begin{remark}
    It is easy to see that B- and C-stationarity are equivalent when $z$ and $\lambda$ satisfy the strict complementarity; that is, $z_i+\lambda_i>0$ for all $i=1,\dots,l$.
\end{remark}
%
%

%


\section{Numerical experiments}\label{sec:exp}
In this section, we report results of numerical experiments conducted to illustrate the behavior of the proposed method with a toy example.
First, we introduce a two-leader--two-follower game, i.e., $N=2$ and $M=2$, to check the validity of the smoothing method.

We refer to the extended model of Hori and Fukushima \cite{Hori2019} with the nonnegative constraint $x^\nu\ge0$ on each leader's optimization problem. 
Leader $\nu\in\{1,2\}$ solves the following problem:
\begin{align}\label{prob.ex:leader}
    \begin{aligned}
        \min_{x^\nu\in\Re^2}\quad&\frac{1}{2}(x^\nu)^\top H_\nu x^\nu+(x^\nu)^\top G_{\nu,-\nu}x^{-\nu}+\sum_{\omega=1,2} (x^\nu)^\top D_{\nu,\omega} y^\omega + (q^\nu)^\top x^\nu\\
        \text{s.t.}\quad& A_\nu x^\nu\le b^\nu,\ x^\nu\ge0,
    \end{aligned}
\end{align}
where $H_\nu\in\Re^{n_\nu\times n_\nu}$, $G_{\nu,-\nu}\in\Re^{n_\nu\times n_{-\nu}}$, $D_\nu\in\Re^{n_\nu\times m}$, $A_\nu\in\Re^{p_\nu\times n_\nu}$, and $b^\nu\in\Re^{p_\nu}$.
Meanwhile, follower $\omega\in\{1,2\}$ solves the following problem:
\begin{align}\label{prob.ex:follower}
    \begin{aligned}
        \min_{y^\omega\in\Re^2}\quad&\frac{1}{2}(y^\omega)^\top M_\omega y^\omega+(y^\omega)^\top Q_{\omega,-\omega} y^{-\omega} -\sum_{\nu=1,2}(x^\nu)^\top D_{\nu,\omega} y^\omega\\
        \text{s.t.}\quad& (c^\omega)^\top y^\omega+\sum_{\nu=1,2} (d^\nu)^\top x^\nu+a_\omega\ge0,\ 
        y^\omega\ge 0.
    \end{aligned}
\end{align}
where $M_\omega$, $\omega=1,2$, are symmetric.
Although it is a quadratic game, it cannot be solved by the approaches given in the previous works~\cite{Hu2011} and \cite{Herty2022} because the response $y(x)$ cannot be obtained explicitly due to the non-diagonal matrices $M_\omega$ and the coefficient vector $c^\omega$ of $y^\omega$ as also indicated in Remark~\ref{rem:assumption}.

The KKT conditions of \eqref{prob.ex:follower}, $\omega=1,2$, are given as follows:
$$
    \begin{aligned}
    &0\le\left[\begin{array}{c} y^1 \\ y^2 \end{array}\right] \perp
    \left[\begin{array}{cc}
        M_1 & Q_{1,2} \\
        Q_{2,1} & M_2
    \end{array}\right]
    \left[\begin{array}{c} y_1 \\ y_2 \end{array}\right]
    -\left[\begin{array}{cc}
        D_{1,1}^\top & D_{2,1}^\top \\
        D_{1,2}^\top & D_{2,2}^\top
    \end{array}\right]
    \left[\begin{array}{c} x_1 \\ x_2 \end{array}\right]
    -\left[\begin{array}{cc}
        c^1 & 0 \\
        0 & c^2
    \end{array}\right]
    \left[\begin{array}{c} \lambda_1 \\ \lambda_2 \end{array}\right]\ge 0,\\
    &0\le\left[\begin{array}{c} \lambda_1 \\ \lambda_2 \end{array}\right] \perp
    \left[\begin{array}{cc}
        (c^1)^\top & 0 \\
        0 & (c^2)^\top
        \end{array}\right]\left[\begin{array}{c}y^1 \\ y^2\end{array}\right]
     +\left[\begin{array}{cc}
       (d^1)^\top & (d^2)^\top \\
       (d^1)^\top & (d^2)^\top
       \end{array}\right]\left[\begin{array}{c}x^1 \\ x^2\end{array}\right]
     +\left[\begin{array}{c} a_1\\ a_2 \end{array}\right]\ge 0.
    \end{aligned}
$$
To ensure the uniqueness of $y$, we assume that 
$$
    \left[
    \begin{array}{cc}
        M_1 & Q_{1,2} \\
        Q_{2,1} & M_2
    \end{array}
    \right]
$$
is positive definite.
Let $y_\varepsilon(x)$ be a solution to the perturbed KKT conditions of \eqref{prob.ex:follower} with the smoothing method.
Here, we define
\[
    A:=\left[\begin{array}{cc}A_1 & \\ & A_2\end{array}\right],\ b:=\left[\begin{array}{c} b^1 \\ b^2 \end{array}\right],\ X:=\{x\in\Re^n\mid Ax\le b,\ x\ge0 \},
\]
and
\[
    F^\ell_\varepsilon(x):=\left[\begin{array}{c}\nabla_{x^1}\Theta^1_\varepsilon(x^1,x^2) \\ \nabla_{x^2}\Theta^2_\varepsilon(x^1,x^2)\end{array}\right],
\]
where
\[
\begin{aligned}
    \nabla_{x^\nu}\Theta^\nu_\varepsilon(x^\nu,x^{-\nu})=&H_\nu x^\nu+G_{\nu,-\nu}x^{-\nu}+\\
    &\sum_{\omega=1,2}\Bigl(D_{\nu,\omega} y^\omega_\varepsilon(x^\nu,x^{-\nu})+\nabla_{x^\nu}y^\omega_\varepsilon(x^\nu,x^{-\nu})D_{\nu,\omega}^\top x^\nu\Bigr).
\end{aligned}
\]
Then the stationary Nash equilibrium of {\NEPeps} must satisfy the following VI: Find $x^*_\varepsilon\in X$ such that
\begin{align}\label{vi.ex}
    \langle F^\ell_\varepsilon(x^*_\varepsilon),x-x^*_\varepsilon\rangle\ge0\quad\forall x\in X,
\end{align}
and its KKT conditions are written as the nonlinear complementarity problem:
\begin{align}
    0\le F^\ell_\varepsilon(x)+A^\top\mu&\perp x\ge 0,\label{ex.NCP1}\\
    0\le b-Ax&\perp\mu\ge0.\label{ex.NCP2}
\end{align}
Let $v:=(x,\mu)\in\Re^{n+p}$ and
\[
    \hat{F}^\varepsilon(v):=\left[\begin{array}{c}F^\ell_\varepsilon(x)+A^\top\mu \\ b-Ax \end{array}\right],\ 
    \Psi^\varepsilon(v):=\left[\begin{array}{c}\phi_0(v_1,\hat{F}^\varepsilon_1(v))\\\vdots\\\phi_0(v_{n+p},\hat{F}^\varepsilon_{n+p}(v))\end{array}\right].
\]
Then NCP \eqref{ex.NCP1}--\eqref{ex.NCP2} is reformulated as the nonsmooth equation $\Psi^\varepsilon(v)=0$.
We solve NCP \eqref{ex.NCP1}--\eqref{ex.NCP2} by semismooth Newton's method proposed for NCP by Luca et al. \cite{Luca1996}, and we use $1/2\|\Psi^\varepsilon(v)\|^2_2$ as a merit function in the line search algorithm.
The stopping criterion is given by
\[
    \|\min\{v,\hat{F}^\varepsilon(v)\}\|_\infty<(n+p)\times10^{-6},
\]
where the $\min$ operator means the componentwise minimum value.
The numerical instances are shown below:
\begin{gather*}
    n_1=n_2=2,\ p_1=p_2=2,\ m_1=m_2=2,\\
    H_1=\left[\begin{array}{rr}3&-4\\-4&2\end{array}\right],
    H_2=\left[\begin{array}{rr}4&-5\\-5&-3\end{array}\right],
    G_{1,2}=\left[\begin{array}{rr}2&-1\\2&2\end{array}\right],
    G_{2,1}=-G_{1,2}^\top,\\
    D_{1,1}=\left[\begin{array}{rr}1&2\\2&1\end{array}\right],
    D_{2,1}=\left[\begin{array}{rr}2&1\\1&1\end{array}\right],
    D_{1,2}=\left[\begin{array}{rr}1&2\\1&1\end{array}\right],
    D_{2,2}=\left[\begin{array}{rr}2&1\\1&2\end{array}\right],\\
    q^1=\left[\begin{array}{c}-6\\-6\end{array}\right],
    q^2=\left[\begin{array}{c}-6\\-6\end{array}\right],
    A_1=\left[\begin{array}{rr}2&1\\1&2\end{array}\right],
    A_2=\left[\begin{array}{rr}1&2\\2&1\end{array}\right],
    b^1=\left[\begin{array}{c}3\\1\end{array}\right],
    b^2=\left[\begin{array}{c}3\\1\end{array}\right],\\
    M_1=\left[\begin{array}{rr}3&1\\1&3\end{array}\right],
    M_2=\left[\begin{array}{rr}2&1\\1&3\end{array}\right],
    Q_{1,2}=\left[\begin{array}{rr}1&1\\1&2\end{array}\right],
    Q_{2,1}=-Q_{1,2}^\top,\\
    c^1=\left[\begin{array}{c}-1\\-1\end{array}\right],
    c^2=\left[\begin{array}{c}-1\\-1\end{array}\right],
    d^1=\left[\begin{array}{c}1\\1\end{array}\right],
    d^2=\left[\begin{array}{c}1\\1\end{array}\right],
    a_1=4,
    a_2=4.
\end{gather*}

We solve NCP \eqref{ex.NCP1}--\eqref{ex.NCP2} sequentially as $\varepsilon_k$ decreases, where $\varepsilon_k=0.9^{k}$, $k=0,\dots,74$.
We run the algorithm with the initial point $x^0:=(3,3,3,3)$.
Here, $y,z,\lambda$ is unique then without initial points, they are uniquely determined depending on $x^0$.
Figures~\ref{fig:ex1.l1}--~\ref{fig:ex1.f2} depict the stationary Nash equilibrium for $\varepsilon_k$, $k=0,\dots,74$, and the sequence $\{x^k\}$ converges to $x^*$.
We set other initial points, but the convergent point and the curve are similar to those figures.
\begin{figure}[ht]
  \centering
  \begin{minipage}[b]{0.49\textwidth}
    \includegraphics[width=\textwidth]{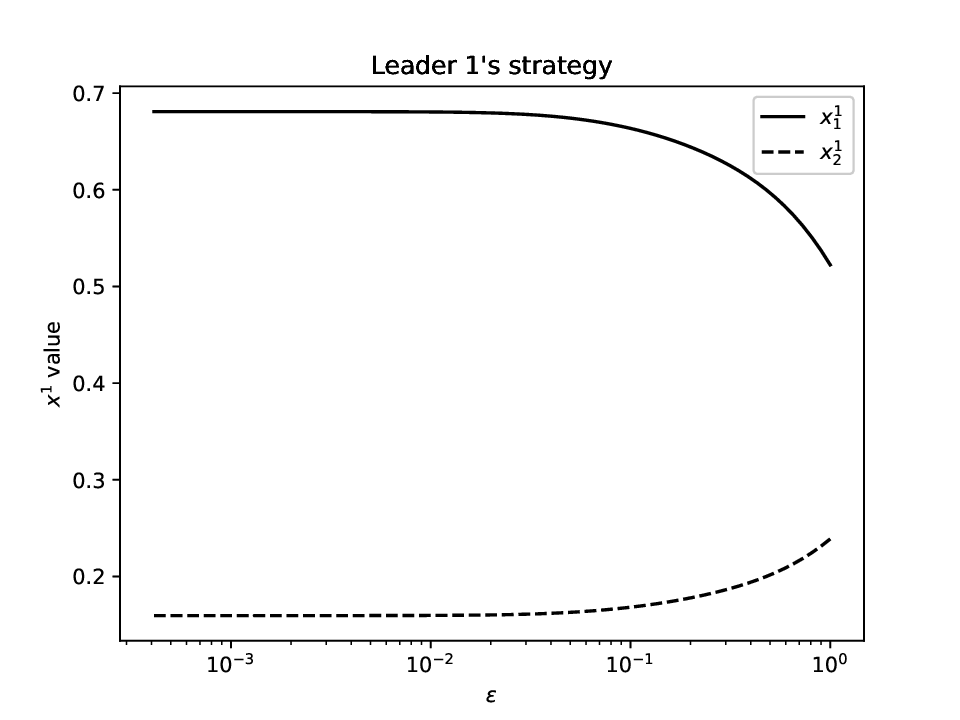}
    \subcaption{Leader 1's strategy $x^1_{\varepsilon_k}$}\label{fig:ex1.l1}
  \end{minipage}
  \hfill
  \begin{minipage}[b]{0.49\textwidth}
    \includegraphics[width=\textwidth]{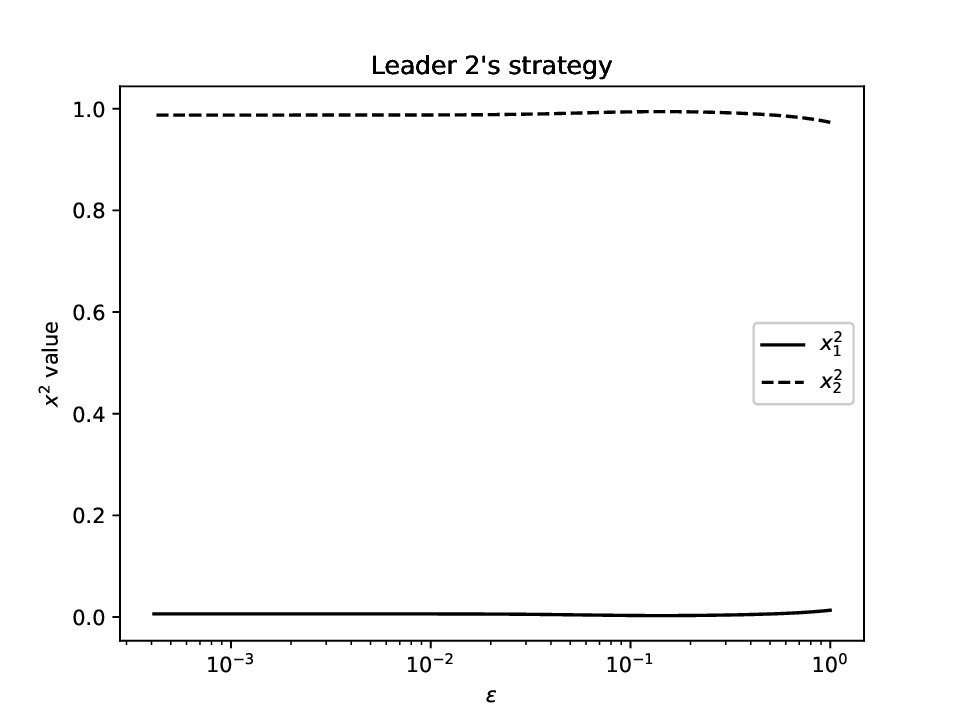}
    \subcaption{Leader 2's strategy $x^2_{\varepsilon_k}$}\label{fig:ex1.l2}
  \end{minipage}
  \hfill
  \begin{minipage}[b]{0.49\textwidth}
    \includegraphics[width=\textwidth]{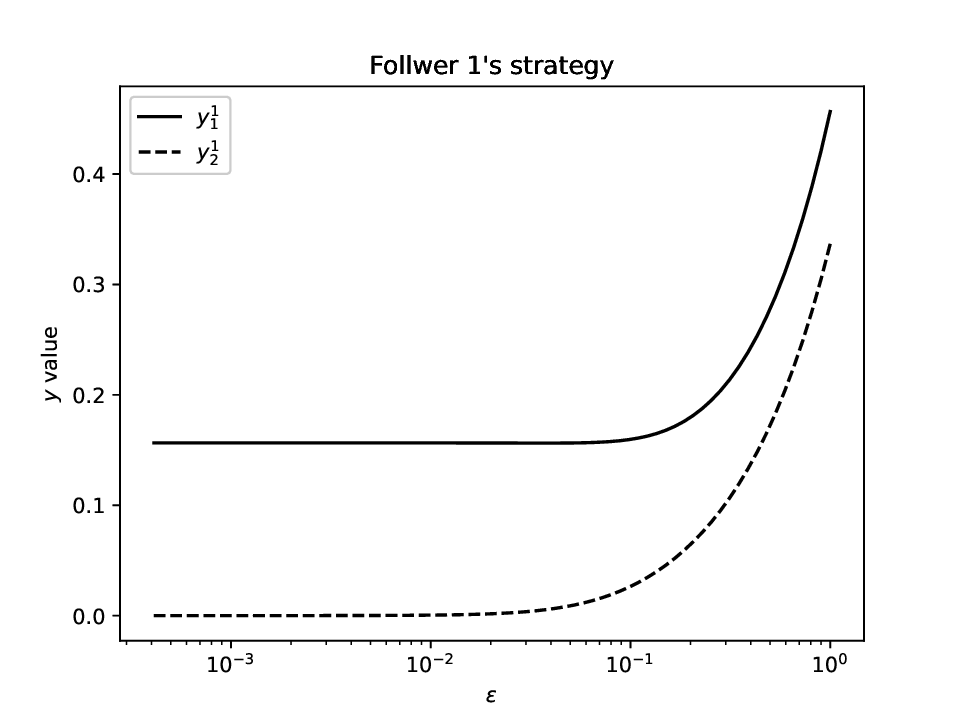}
    \subcaption{Follower's strategy $y^1_{\varepsilon_k}(x_{\varepsilon_k})$}\label{fig:ex1.f1}
  \end{minipage}
  \hfill
  \begin{minipage}[b]{0.49\textwidth}
    \includegraphics[width=\textwidth]{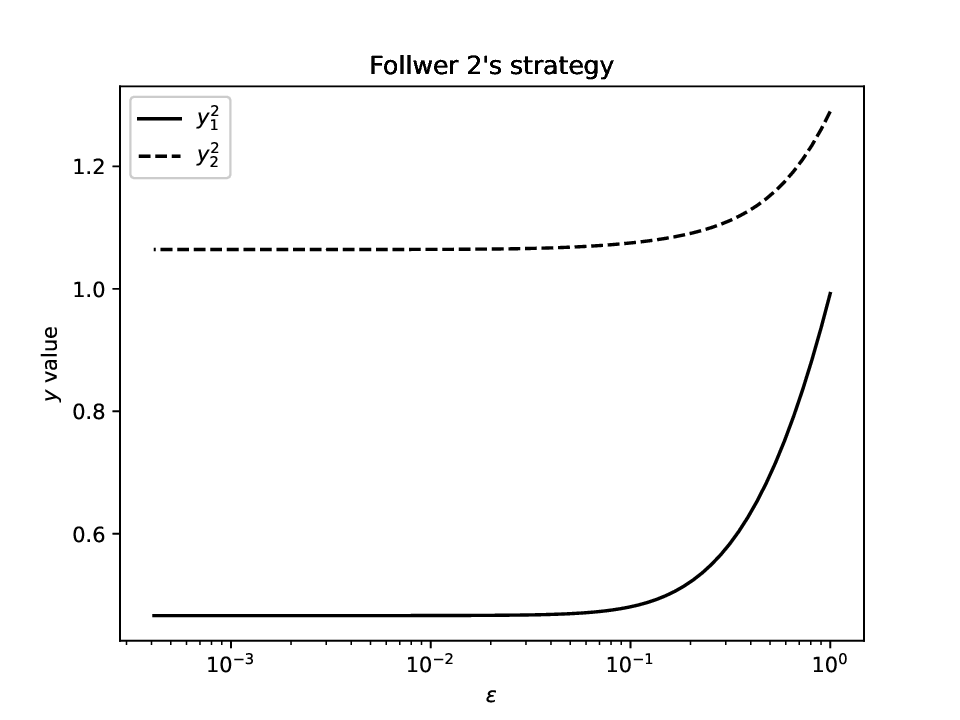}
    \subcaption{Follower's strategy $y^2_{\varepsilon_k}(x_{\varepsilon_k})$}\label{fig:ex1.f2}
  \end{minipage}
  \caption{Sequence of the stationary Nash equilibrium.}
\end{figure}
\begin{remark}
    The semismooth Newton's method uses $\nabla F^\ell_\varepsilon(x)$ with the Hessian matrix $\nabla^2 y_\varepsilon(x)$ of the response function $y_\varepsilon(x)$, which requires the Hessian of $H_\varepsilon(x,y,z,\lambda)$ with respect to $(y,z,\lambda)$ and so on.
    However, the formula of $\nabla^2 y_\varepsilon(x)$ is much complicated to obtain, and thus we use a numerical differentiation.
    Hence, the accuracy of the numerical results when $\varepsilon$ is very small may not be guaranteed. 
\end{remark}
We now conclude this section.
We verified the behavior of the proposed method with toy examples, and all the solutions $x^k$ and $y_{\varepsilon_k}(x^k)$ converge to the B-stationary Nash equilibrium as $\varepsilon_k\to0$.
Meanwhile, we require the Jacobian matrix of $F^\ell_\varepsilon(x)$ to solve variational inequality \eqref{vi.ex} with gradient-based method which includes the Hessian matrix of the response function $y_\varepsilon(x)$.
Hence the computation of the Hessian matrix and its inverse is, of course, expensive when the dimension of followers' problems is high.
We leave this issue as a topic for future research.
%
%

\section{Conclusion}\label{sec:conclusion}
This paper has presented a smoothing method of the followers' best response in the MLMFG in the case where the followers' optimization problems are more general than the previous studies \cite{Herty2022,Hu2011}.
Then we have shown the convergence result for a Clarke and Bouligand stationary Nash equilibrium on MLMFG as the smoothing parameter tends to zero.
Finally, a numerical experiment has been conducted to confirm the validity of the proposed method; specifically, observing the behavior of the sequence of stationary Nash equilibria as the smoothing parameter approaches zero.

%
Remark that in this paper we did not mention a local surrogate of the LF Nash equilibrium for MLMFG.
To the best of the authors' knowledge, there is no such literature on MLMFG that deals with a local LF Nash equilibrium.
Unfortunately, verifying local optimality in nonconvex optimization is still NP-hard, and thus as well as finding an LF Nash equilibrium in MLMFG, finding the LF local Nash equilibrium may be difficult in practice but theoretically important such as second-order analysis.

\paragraph{Acknowledgement}
The authors would like to express their appreciation to Prof. Masao Fukushima for his valuable comments.
This work was supported by the Grant-in-Aid for Scientific Research (C) (19K11840) from Japan Society for the Promotion of Science.


\end{document}